\author{Tuomas Orponen}
\title{Additive properties of fractal sets on the parabola}
\address{Department of Mathematics and Statistics\\ University of Jyv\"askyl\"a,
P.O. Box 35 (MaD)\\
FI-40014 University of Jyv\"askyl\"a\\
Finland}
\email{tuomas.t.orponen@jyu.fi}
\date{\today}
\subjclass[2010]{28A80 (primary) 11B30 (secondary)}
\keywords{Fourier transforms, additive energies, Furstenberg sets, Frostman measures}
\thanks{T.O. is supported by the Academy of Finland via the projects \emph{Quantitative rectifiability in Euclidean and non-Euclidean spaces} and \emph{Incidences on Fractals}, grant Nos. 309365, 314172, 321896.}
\newcommand{\R}{\mathbb{R}}
\newcommand{\He}{\mathbb{H}}
\newcommand{\N}{\mathbb{N}}
\newcommand{\C}{\mathbb{C}}
\newcommand{\Z}{\mathbb{Z}}
\newcommand{\spt}{\operatorname{spt}}
\newcommand{\Hd}{\dim_{\mathrm{H}}}
\newcommand{\spa}{\operatorname{span}}
\newcommand{\diam}{\operatorname{diam}}
\newcommand{\dist}{\operatorname{dist}}
\def\Barint_#1{\mathchoice
          {\mathop{\vrule width 6pt height 3 pt depth -2.5pt
                  \kern -8pt \intop}\nolimits_{#1}}%
          {\mathop{\vrule width 5pt height 3 pt depth -2.6pt
                  \kern -6pt \intop}\nolimits_{#1}}%
          {\mathop{\vrule width 5pt height 3 pt depth -2.6pt
                  \kern -6pt \intop}\nolimits_{#1}}%
          {\mathop{\vrule width 5pt height 3 pt depth -2.6pt
                  \kern -6pt \intop}\nolimits_{#1}}}
\numberwithin{equation}{section}
\theoremstyle{plain}
\newtheorem{thm}[equation]{Theorem}
\newtheorem*{"thm"}{"Theorem"}
\newtheorem{conjecture}[equation]{Conjecture}
\newtheorem{lemma}[equation]{Lemma}
\newtheorem{ex}[equation]{Example}
\newtheorem{cor}[equation]{Corollary}
\newtheorem{proposition}[equation]{Proposition}
\theoremstyle{definition}
\newtheorem{definition}[equation]{Definition}
\theoremstyle{remark}
\newtheorem{remark}[equation]{Remark}
\newcommand{\nref}[1]{(\hyperref[#1]{#1})}
\DeclareMathSymbol{\intop}  {\mathop}{mathx}{"B3}
\begin{document} 

\begin{abstract} Let $0 \leq s \leq 1$, and let $\mathbb{P} := \{(t,t^{2}) \in \R^{2} : t \in [-1,1]\}$. If $K \subset \mathbb{P}$ is a closed set with $\Hd K = s$, it is not hard to see that $\Hd (K + K) \geq 2s$. The main corollary of the paper states that if $0 < s < 1$, then adding $K$ once more makes the sum slightly larger:
\begin{displaymath} \Hd (K + K + K) \geq 2s + \epsilon, \end{displaymath}
where $\epsilon = \epsilon(s) > 0$. This information is deduced from an $L^{6}$ bound for the Fourier transforms of Frostman measures on $\mathbb{P}$. If $0 < s < 1$, and $\mu$ is a Borel measure on $\mathbb{P}$ satisfying $\mu(B(x,r)) \leq r^{s}$ for all $x \in \mathbb{P}$ and $r > 0$, then there exists $\epsilon = \epsilon(s) > 0$ such that
\begin{displaymath} \|\hat{\mu}\|_{L^{6}(B(R))}^{6} \leq R^{2 - (2s + \epsilon)} \end{displaymath} 
for all sufficiently large $R \geq 1$. The proof is based on a reduction to a $\delta$-discretised point-circle incidence problem, and eventually to the $(s,2s)$-Furstenberg set problem.
\end{abstract}

\maketitle

\tableofcontents

\section{Introduction}

The main result of this paper investigates the $L^{p}$-norms of Fourier transforms of fractal measures on the truncated parabola $\mathbb{P} = \{(t,t^{2}) : t \in [-1,1]\}$:\begin{thm}\label{main} Let $0 \leq s \leq 1$, and let $\mu$ be a Borel measure on $\mathbb{P}$ satisfying the Frostman condition $\mu(B(x,r)) \leq r^{s}$ for all $x \in \R^{2}$ and $r > 0$. Then,
\begin{equation}\label{form89} \|\hat{\mu}\|_{L^{4}(B(R))} \lessapprox R^{(2 - 2s)/4}, \qquad R \geq 1. \end{equation}
If $0 < s < 1$, and $p > 4$, then there exists a constant $\epsilon = \epsilon(p,s) > 0$ such that
\begin{displaymath} \|\hat{\mu}\|_{L^{p}(B(R))} \leq C_{p,s} R^{(2 - (2s + \epsilon))/p}, \qquad R \geq 1. \end{displaymath}
The function $s \mapsto \epsilon(p,s)$ is bounded away from $0$ on any compact subset of $(0,1)$.
\end{thm}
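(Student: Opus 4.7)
The plan is to prove the $L^4$ bound by a direct Plancherel plus additive-energy argument exploiting the rigidity of sums on the parabola, to establish the improved $L^6$ bound by reducing to a $\delta$-discretised point-circle incidence problem and invoking an $(s,2s)$-Furstenberg set estimate, and finally to obtain the stated $L^p$ bounds by interpolation. The main obstacle will be the point-circle/Furstenberg step.

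For \eqref{form89}, Plancherel identifies $\|\hat{\mu}\|_{L^4(B(R))}^4$, up to an uncertainty factor of scale $\delta = 1/R$, with the four-fold additive energy of $\mu$ at scale $\delta$: the $\mu^{\otimes 4}$-measure of $\{(x_1,x_2,x_3,x_4) \in \mathbb{P}^4 : |x_1 + x_2 - x_3 - x_4| \lesssim \delta\}$. Writing $x_i = (t_i, t_i^2)$, the scalar identities $t_1 + t_2 = t_3 + t_4$ and $t_1^2 + t_2^2 = t_3^2 + t_4^2$ determine $\{t_1,t_2\} = \{t_3,t_4\}$ as unordered pairs. At the $\delta$-discretised level this concentrates the energy on essentially diagonal configurations, and the Frostman hypothesis $\mu(B(x,r)) \leq r^s$ then yields $\|\hat{\mu}\|_{L^4(B(R))}^4 \lessapprox R^{2-2s}$.

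For the $L^6$ bound the same reduction makes $\|\hat{\mu}\|_{L^6(B(R))}^6$ a weighted count of $\delta$-approximate additive sextuples $(x_1,\ldots,x_6) \in \mathbb{P}^6$ with $|(x_1 + x_2 + x_3) - (x_4 + x_5 + x_6)| \lesssim \delta$. Setting $u = t_1+t_2+t_3$, $v = t_1^2+t_2^2+t_3^2$ and eliminating $t_1 = u - (t_2+t_3)$, the fibre of the map $(t_1,t_2,t_3) \mapsto (u,v)$ over a point $(u,v) \in \R^2$ is a plane conic in $(t_2,t_3)$; an explicit calculation shows this conic is, after a fixed affine change of variables independent of $(u,v)$, a circle whose centre and radius depend smoothly on $(u,v)$. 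The sextuple count therefore reduces to counting $\delta$-approximate incidences between $\spt(\mu^{\otimes 2})$ and a two-parameter family of circles, precisely an $(s,2s)$-Furstenberg-type configuration: a roughly $2s$-dimensional family of circles, each meeting $\spt(\mu^{\otimes 2})$ in a set carrying Frostman $s$-mass. The $\epsilon$-improvement in the $(s,2s)$-Furstenberg set estimate translates into the $R^{-\epsilon}$ gain in the energy and hence into $\|\hat{\mu}\|_{L^6(B(R))}^6 \leq R^{2-(2s+\epsilon)}$. Uniformity of $\epsilon$ on compact subsets of $(0,1)$ is inherited from the corresponding uniformity of the Furstenberg estimate.

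The full theorem then follows by H\"older interpolation: between the $L^4$ and $L^6$ bounds for $4 < p \leq 6$, and between the $L^6$ bound and the trivial estimate $\|\hat{\mu}\|_\infty \leq \mu(\mathbb{P}) \lesssim 1$ for $p > 6$.
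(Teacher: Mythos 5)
Your reduction of the $L^{4}$ bound to the four-fold $\delta$-additive energy, parametrised by $(t_{1}+t_{2},t_{1}^{2}+t_{2}^{2})$, is a valid and in fact more elementary route than the paper's, which instead uses a wave packet decomposition together with the C\'ordoba--Fefferman $L^{4}$-orthogonality lemma and a tailored non-concentration estimate (Lemma \ref{lemma1}). Your sketch elides the one delicate point: near the diagonal $t_{3}\approx t_{4}$ the map $(t_{1},t_{2})\mapsto(t_{1}+t_{2},t_{1}t_{2})$ degenerates, and one must balance the width $\delta/|t_{3}-t_{4}|$ of the approximate fibre against the Frostman mass to get $E_{\delta}\lessapprox\delta^{2s}$ rather than the weaker $\delta^{3s/2}$. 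This can be done by a dyadic decomposition in $|t_{3}-t_{4}|$ and is exactly the phenomenon the paper's wave-packet Lemma \ref{lemma1} is isolating, so both routes converge on the same cancellation; but you should state the computation, not just the conclusion.

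For $L^{6}$ there is a genuine gap. You correctly reduce to incidences with a two-parameter family of circles (this is Lemma \ref{demeterLemma} in the paper, following Bombieri--Bourgain--Demeter), and you correctly identify the $(s,2s)$-Furstenberg shape of the configuration. But the Furstenberg-set theorems you want to invoke (Bourgain, H\'era--Shmerkin--Yavicoli, Di Benedetto--Zahl, quoted here as Theorem \ref{DBZ}) are for \emph{lines}, not circles. Nothing you wrote explains how to pass from circles to lines, and this step is not cosmetic: it is the content of the whole of Section \ref{circlesToLines}. The paper exploits the specific structure that all the circles $S_{a,b}$ are centred on the $x$-axis, and then applies the map $\mathcal{G}=\mathcal{F}\circ\mathcal{C}$ with $\mathcal{C}(z)=(z-i)/(z+i)$ (Cayley transform, Poincar\'e half-plane to disc) and $\mathcal{F}(z)=2z/(1+|z|^{2})$ (Poincar\'e disc to Beltrami--Klein), which sends such circular arcs to chords of $B(1)$, i.e.\ to line segments, bilipschitzly on bounded sets; one then still has to check (Lemmas \ref{lemma5}--\ref{lemma7}, Corollaries \ref{cor1}--\ref{cor2}) that the induced family of lines is a $(\delta,2s)$-set in $\mathcal{A}(2,1)$. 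Without this, or some substitute Furstenberg estimate for pencils of circles, your argument does not close. Separately, you also omit the reductions that make the discretisation honest: passing to a measure that is almost constant on $\delta$-arcs (the $\kappa$-constancy step), and the Whitney/bilinear decomposition giving the transversality $\dist(I,J)\geq\tau$ needed for Lemma \ref{lemma4}; without these, the pigeonholing that produces the $(\delta,2s)$-family of circles and the $(\delta,s)$-sets on each of them does not go through.
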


The inequality \eqref{form89} means that for every $\eta > 0$, there exists a constant $C_{\eta,s} > 0$ such that $\|\hat{\mu}\|_{L^{4}(B(R))} \leq C_{\eta,s} R^{(2 - 2s)/4 + \eta}$ for all $R \geq 1$. The exponent $2 - 2s$ in \eqref{form89} is sharp, and this part of the theorem is not hard to prove. In fact, the classical \emph{$L^{4}$-orthogonality method} immediately reduces the oscillatory problem to a Kakeya type problem regarding families of wave packets arising from the $s$-dimensional measure $\mu$. Identifying the sharp non-concentration condition for such wave packet families takes some work, see Lemma \ref{lemma1}. After this has been accomplished, however, the Kakeya problem can be solved with the standard $L^{2}$ method, see the proof of Theorem \ref{L4thm}.

The second part of Theorem \ref{main} is more complicated, and has the following corollary regarding the dimension of triple sums of fractal subsets of $\mathbb{P}$:
\begin{cor}\label{mainCor} For every $0 < s < 1$, there exists $\epsilon = \epsilon(s) > 0$ such that the following holds. Let $K \subset \mathbb{P}$ be a Borel set with $\Hd K = s$. Then,
\begin{displaymath} \Hd (K + K + K) \geq 2s + \epsilon. \end{displaymath}
\end{cor}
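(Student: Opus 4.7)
The plan is to convert the $L^{6}$ bound from Theorem \ref{main} into an upper bound on a Riesz $t$-energy of the triple convolution $\mu * \mu * \mu$, where $\mu$ is a Frostman measure on $K$. Since this convolution is a positive measure supported on $K+K+K$, an energy bound for exponent $t > 2s$ will translate into $\Hd(K+K+K) \geq t$ by the usual Frostman-energy dichotomy.

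Fix $s \in (0,1)$ and let $s' \in (0, s)$ be arbitrary. Since $\Hd K = s > s'$, Frostman's lemma produces a nontrivial Borel measure $\mu$ supported on $K \subset \mathbb{P}$ satisfying $\mu(B(x,r)) \leq r^{s'}$ for every $x \in \R^{2}$ and $r > 0$ (dividing out the usual constant). Set $\nu := \mu * \mu * \mu$, so that $\hat{\nu} = \hat{\mu}^{3}$, $\spt \nu \subset K+K+K$, and $\|\nu\| = \|\mu\|^{3} > 0$. Applying Theorem \ref{main} with $p = 6$ and Frostman exponent $s'$ produces $\epsilon' := \epsilon(6, s') > 0$ with
$$\|\hat{\mu}\|_{L^{6}(B(R))}^{6} \leq C R^{2 - 2s' - \epsilon'}, \qquad R \geq 1.$$

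For $0 < t < 2$, the Fourier representation of the Riesz $t$-energy in $\R^{2}$ reads $I_{t}(\nu) = c_{t} \int_{\R^{2}} |\hat{\mu}(\xi)|^{6} |\xi|^{t-2}\, d\xi$. Splitting into dyadic annuli $A_{k} := \{2^{k} \leq |\xi| < 2^{k+1}\}$ and using $t - 2 < 0$, one obtains
$$I_{t}(\nu) \lesssim \|\hat{\mu}\|_{\infty}^{6} + \sum_{k \geq 0} 2^{k(t-2)} \|\hat{\mu}\|_{L^{6}(A_{k})}^{6} \lesssim 1 + \sum_{k \geq 0} 2^{k(t - 2s' - \epsilon')},$$
which converges whenever $t < \min\{2, 2s' + \epsilon'\}$. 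For any such $t$ the finiteness $I_{t}(\nu) < \infty$ forces $\Hd(\spt \nu) \geq t$, so $\Hd(K + K + K) \geq \min\{2, 2s' + \epsilon'\}$.

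To upgrade this to an $\epsilon$ depending only on $s$, one invokes the final sentence of Theorem \ref{main}: there exist $\eta_{0} = \eta_{0}(s) \in (0, s/2)$ and $\epsilon_{0} = \epsilon_{0}(s) > 0$ such that $\epsilon(6, s') \geq \epsilon_{0}$ for every $s' \in [s - \eta_{0}, s]$. Letting $s' \nearrow s$ in the preceding step then yields $\Hd(K + K + K) \geq 2s + \epsilon_{0}$, establishing the corollary with $\epsilon := \epsilon_{0}$. The whole argument is routine once Theorem \ref{main} is in hand; the single delicate point, and the one that truly requires the last sentence of the theorem, is the quantifier bookkeeping that forces the final $\epsilon$ to depend only on $s$, and not on the auxiliary Frostman exponent $s'$ through which the energy bound is accessed.
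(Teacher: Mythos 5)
Your argument is correct and follows essentially the same route as the paper: pick a Frostman measure $\mu$ of exponent $s' < s$ on $K$, convert the $L^{6}$ bound of Theorem \ref{main} into finiteness of the Riesz $t$-energy of $\mu \ast \mu \ast \mu$ for $t < 2s' + \epsilon(6,s')$, conclude $\Hd(K+K+K) \geq t$, and then use the uniformity clause (``bounded away from $0$ on compact subsets of $(0,1)$'') to let $s' \nearrow s$. The paper packages the dyadic-annulus Riesz-energy computation into Lemma \ref{lemma9} and applies that lemma to $\mu \ast \mu \ast \mu$, whereas you carry out the same computation inline; that is a cosmetic difference, not a different method. The only (very minor) loose end is that when $2s + \epsilon_{0} > 2$ the energy argument can only yield $\Hd \geq 2$, so the final $\epsilon(s)$ should be $\min\{\epsilon_{0}, 2-2s\}$, which is still positive for $s<1$.
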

For two summands instead of three, the lower bound $\Hd (K + K) \geq 2s$ is sharp and easy (even easier than the first part of Theorem \ref{main}). The main point in Corollary \ref{mainCor} is the $\epsilon$-improvement over this "trivial" bound. 

\subsection{Connection to discrete problems} Corollary \ref{mainCor} is a "continuous" version of the following discrete question: if $P \subset \mathbb{P}$ is a finite set, how large is (at least) the cardinality of $|P + P + P|$, or more generally $|kP|$ for $k \geq 3$? A variant of the problem asks for upper bounds for the $n^{th}$ \emph{additive energy}
\begin{displaymath} E_{n}(P) = |\{(x_{1},\ldots,x_{k},y_{1},\ldots,y_{k}) \in P^{2n} : x_{1} + \ldots + x_{n} = y_{1} + \ldots + y_{n}\}|. \end{displaymath}
This formulation is the discrete analogue of the problem studied in Theorem \ref{main}. The problems are related by the inequality $E_{n}(P)|nP| \geq |P|^{2n}$, an easy consequence of Cauchy-Schwarz. Bourgain and Demeter \cite{MR3374964} showed that $E_{3}(P) \lesssim_{\epsilon} |P|^{7/2 + \epsilon}$ for all $\epsilon > 0$, and asked (\cite[Question 2.13]{MR3374964}) if the estimate can be improved to $E_{3}(P) \lesssim |P|^{3 + \epsilon}$ for all $\epsilon > 0$. The positive result yields $|P + P + P| \gtrsim_{\epsilon} |P|^{5/2 - \epsilon}$, and a positive answer to the question would yield the optimal result $|P + P + P| \gtrsim_{\epsilon} |P|^{3 - \epsilon}$. If $\delta \in (0,1]$, and $P \subset \mathbb{P}$ is assumed to be $\delta$-separated, then the optimal bound follows from the sharp $\ell^{2}$-decoupling theorem: $E_{3}(P) \lesssim_{\epsilon} \delta^{-\epsilon}|P|^{3}$, see \cite[Theorem 13.21]{MR3971577}.

For $E_{3}(P)$ and $|P + P + P|$, these are the best current results, as far as I know. However, Mudgal \cite[Corollary 1.2]{2020arXiv200809247M} has recently obtained improvement for higher energies: $E_{k}(P) \lesssim_{k} |P|^{2k - 3 + \epsilon(k)}$, where $\epsilon(k) = (1/4 - 1/7246) \cdot 2^{-k + 4}$ for $k \geq 4$. In particular, it follows that $|kP| \geq |P|^{3 - o_{k}(1)}$, where $o_{k}(1) \to 0$ as $k \to \infty$. 

In analogy, one might hope that $\lim_{n \to \infty} \Hd (nK) \geq 3\Hd K$, but this is clearly false if $\Hd K > 2/3$. A more plausible conjecture might be that $\lim_{n \to \infty} \Hd (nK) \geq \min\{3\Hd K,\Hd K + 1\}$. This is closely connected to the discussion in Section \ref{s:epsilonValue}. 

\subsection{Connection to Borel subrings and proof of Corollary \ref{mainCor}} The $\epsilon$-improvements in the second part of Theorem \ref{main} and Corollary \ref{mainCor} are closely connected with the Borel subring problem. This problem, solved independently by Edrgar-Miller \cite{MR1948103} and Bourgain \cite{MR1982147} around 2000, asked to show that every Borel subring $R \subset \R$ has $\Hd R \in \{0,1\}$. This follows immediately from Corollary \ref{mainCor}, as we will discuss in this section. The point here is not to announce a new solution (indeed the argument of Theorem \ref{main} relies on previous solutions), but rather to shed light on the problem of bounding $\|\hat{\mu}\|_{L^{p}}$. 

To deduce Corollary \ref{mainCor} from Theorem \ref{main}, we need a standard lemma:
\begin{lemma}\label{lemma9} Let $\mu$ be a non-trivial finite Borel measure on $\R^{d}$, and let $\mu_{\delta} := \mu \ast \psi_{\delta}$, $\delta > 0$, where $\psi_{\delta}(x) = \delta^{-d}\psi(x/\delta)$ is a standard approximate identity; $\psi \in C^{\infty}_{c}(\R^{d})$ with $\psi \geq 0$ and $\int \psi = 1$. Let $s \in [0,d]$, and assume that
\begin{displaymath} \|\mu_{\delta}\|_{L^{2}(\R^{d})}^{2} \leq \delta^{s - d}, \qquad 0 < \delta \leq \delta_{0}. \end{displaymath}
Then $|(\spt \mu)|_{\delta} \gtrsim_{\psi} \|\mu\|^{2} \cdot \delta^{-s}$ for all $0 < \delta \leq \delta_{0}$, and $\Hd (\spt \mu) \geq s$. Here $|\cdot|_{\delta}$ refers to the $\delta$-covering number, and $\Hd$ is Hausdorff dimension. \end{lemma}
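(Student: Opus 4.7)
The statement splits into two claims; my plan is to handle the covering number bound by a direct Cauchy--Schwarz argument exploiting the compact support of $\psi$, and the Hausdorff dimension bound by translating the $L^{2}$ hypothesis into an averaged Frostman condition and feeding the latter into the energy characterisation of $\Hd$.

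For the covering number, cover $\spt \mu$ by $N := |(\spt \mu)|_{\delta}$ closed balls of radius $\delta$. Since $\psi$ is compactly supported, $\spt \mu_{\delta}$ is contained in the $C_{\psi}\delta$-neighbourhood of $\spt\mu$, and in particular $|\spt \mu_{\delta}| \lesssim_{\psi} N\delta^{d}$. Since $\int \mu_{\delta} \, dx = \|\mu\|$, Cauchy--Schwarz gives
\[ \|\mu\| = \int_{\spt \mu_{\delta}} \mu_{\delta} \, dx \le |\spt \mu_{\delta}|^{1/2} \|\mu_{\delta}\|_{L^{2}} \lesssim_{\psi} N^{1/2}\delta^{d/2} \cdot \delta^{(s-d)/2} = N^{1/2}\delta^{s/2}, \]
and rearranging yields $N \gtrsim_{\psi} \|\mu\|^{2}\delta^{-s}$.

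For the dimension bound I would expand
\[ \|\mu_{\delta}\|_{L^{2}}^{2} = \iint (\psi_{\delta} \ast \tilde\psi_{\delta})(y - z) \, d\mu(y) \, d\mu(z), \qquad \tilde\psi(x) := \psi(-x). \]
The function $\psi_{\delta} \ast \tilde\psi_{\delta}$ is smooth, vanishes outside $B(0,C_{\psi}\delta)$, has Lipschitz constant $\lesssim_{\psi} \delta^{-d-1}$, and equals $\delta^{-d}\|\psi\|_{L^{2}}^{2} > 0$ at the origin; hence there exists $c = c(\psi) > 0$ with $\psi_{\delta} \ast \tilde\psi_{\delta}(w) \ge c \delta^{-d}$ for $|w| \le c\delta$. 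The $L^{2}$ hypothesis therefore upgrades to the averaged Frostman bound
\[ \int \mu(B(y,c\delta)) \, d\mu(y) \lesssim_{\psi} \delta^{s}, \qquad 0 < \delta \le \delta_{0}. \]
Substituting this into the layer-cake identity
\[ I_{t}(\mu) := \iint |x-y|^{-t} \, d\mu(x) \, d\mu(y) = t \int_{0}^{\infty} r^{-t-1} \int \mu(B(y,r)) \, d\mu(y) \, dr \]
one sees that $I_{t}(\mu) < \infty$ for every $0 < t < s$: the integrand is $\lesssim_{\psi} r^{s-t-1}$ for small $r$ (from the averaged Frostman bound applied at dyadic scales, say) and $\lesssim \|\mu\|^{2}r^{-t-1}$ for large $r$, using $\mu(\R^{d}) = \|\mu\| < \infty$. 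The Frostman energy characterisation of Hausdorff dimension then yields $\Hd(\spt \mu) \ge t$ for all $t < s$, whence $\Hd(\spt \mu) \ge s$.

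The only genuinely delicate point I anticipate is securing the pointwise lower bound $\psi_{\delta} \ast \tilde\psi_{\delta} \gtrsim_{\psi} \delta^{-d}$ on a ball of radius $\sim \delta$ about the origin, which is however an elementary continuity argument once one notes $\psi_{\delta} \ast \tilde\psi_{\delta}(0) = \delta^{-d}\|\psi\|_{L^{2}}^{2}$. All remaining ingredients --- Cauchy--Schwarz, the layer-cake identity, and the Frostman energy characterisation --- are completely standard.
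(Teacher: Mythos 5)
Your proof is correct, and for the covering-number estimate it coincides with what the paper does (the paper cites the same Cauchy--Schwarz inequality $\|f\|_{L^1}^2 \le \mathrm{Leb}(\spt f)\|f\|_{L^2}^2$ applied to $f = \mu_\delta$; you simply spell out the $\lesssim_{\psi} N\delta^d$ bound on $\mathrm{Leb}(\spt\mu_\delta)$ that makes this quantitative).

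For the Hausdorff-dimension bound, however, you take a genuinely different route. The paper works in Fourier space: it observes that $|\widehat{\psi_{c\delta}}| \gtrsim 1$ on $B(\delta^{-1})$, so that
\begin{displaymath}
\int |\hat\mu(\xi)|^2 |\xi|^{\sigma - d}\, d\xi \lesssim \sum_{j \ge 0} 2^{j(\sigma-d)} \int |\hat\mu(\xi)|^2 |\widehat{\psi_{c2^{-j}}}(\xi)|^2 \, d\xi = \sum_{j\ge 0} 2^{j(\sigma - d)}\|\mu \ast \psi_{c2^{-j}}\|_2^2,
\end{displaymath}
and then the hypothesis makes this geometric series converge for $\sigma < s$; finiteness of the Fourier-energy integral yields the dimension bound. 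You instead stay entirely in physical space: you expand $\|\mu_\delta\|_2^2 = \iint (\psi_\delta \ast \tilde\psi_\delta)(y-z)\,d\mu(y)\,d\mu(z)$, use positivity of $\psi_\delta \ast \tilde\psi_\delta$ near the origin (at scale $\delta^{-d}$ on a ball of radius $\sim\delta$ — the scaling identity $\psi_\delta\ast\tilde\psi_\delta = \delta^{-d}(\psi\ast\tilde\psi)(\cdot/\delta)$ makes this immediate) to extract the averaged Frostman bound $\int\mu(B(y,c\delta))\,d\mu(y) \lesssim_\psi \delta^s$, and then substitute into the layer-cake formula for the Riesz $t$-energy $I_t(\mu)$. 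Both routes ultimately invoke the energy characterisation of Hausdorff dimension; they are two sides of the same coin since $I_{\sigma}(\mu) \sim_{d,\sigma} \int|\hat\mu(\xi)|^2|\xi|^{\sigma-d}\,d\xi$. What the paper's Fourier argument buys is brevity and a very direct use of the hypothesis (the $\|\mu_\delta\|_2^2$ bound appears verbatim after one Plancherel-flavoured step). What your physical-space argument buys is that it is more elementary — it never passes through the Fourier transform — and it makes explicit the useful intermediate object $\int \mu(B(y,r))\,d\mu(y) \lesssim r^s$, an ``$L^2$-averaged Frostman condition'' that is often quoted in its own right. Both are clean proofs.
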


\begin{proof} Note that $|\widehat{\psi_{c\delta}}(\xi)| \gtrsim 1$ for all $|\xi| \leq \delta^{-1}$ if $c > 0$ is sufficiently small (depending on the choice of $\psi$).Now, if $0 < \sigma < s$, then we have
\begin{displaymath} \int |\hat{\mu}(\xi)|^{2}|\xi|^{\sigma - d} \, d\xi \lesssim \sum_{j \geq 0} 2^{j(\sigma - d)} \int_{B(2^{j})} |\hat{\mu}(\xi)|^{2} \, d\xi \lesssim \sum_{j \geq 0} 2^{j(\sigma - d)} \int |\hat{\mu}(\xi)|^{2}|\widehat{\psi_{c2^{-j}}}(\xi)|^{2} \, d\xi. \end{displaymath} 
The integral on the right is $\|\mu \ast \psi_{c2^{-j}}\|_{2}^{2} \leq (c2^{-j})^{s - d}$, so the sum is finite for $\sigma < s$. It is well-known that this implies $\Hd (\spt \mu) \geq s$, see \cite[Theorem 8.7 \& Lemma 12.12]{zbMATH01249699}. The claim about $|(\spt \mu)|_{\delta}$ is even simpler, being based on the inequality $\|f\|_{L^{1}}^{2} \leq \mathrm{Leb}(\spt f)\|f\|_{L^{2}}^{2}$ applied to $f = \mu_{\delta}$. \end{proof}

Corollary \ref{mainCor} follows from Theorem \ref{main}, and Lemma \ref{lemma9}. Indeed, if $K \subset \mathbb{P}$ is Borel with $\Hd K = s \in (0,1)$, then for any $0 \leq \sigma < s$, Frostman's lemma (see \cite[Theorem 8.8]{zbMATH01249699}) yields a non-trivial measure $\mu$ with $\spt \mu \subset K$ and $\mu(B(x,r)) \leq r^{\sigma}$ for all $x \in \mathbb{P}$ and $r > 0$. Then 
\begin{displaymath} \|(\mu \ast \mu \ast \mu)_{\delta}\|_{L^{2}}^{2} = \|\widehat{\mu_{\delta}}\|_{L^{6}}^{6} \leq C_{\sigma} \delta^{2\sigma + \epsilon - 2}, \qquad \delta \in (0,1], \end{displaymath}
and consequently $\Hd (K + K + K) \geq \Hd \spt (\mu \ast \mu \ast \mu) \geq 2\sigma + \epsilon$. Since $\epsilon$ is bounded away from zero for $\sigma$ sufficiently close to $s$, Corollary \ref{mainCor} follows by letting $\sigma \to s$.

Next, let us see why Corollary \ref{mainCor} implies the non-existence of Borel subrings of intermediate dimension. Let $A \subset \R$ be any Borel set with $\Hd A = s$, where $s \in (0,1)$. Then $K := K_{A} := \{(t,t^{2}) : t \in \R\}$ is a Borel subset of $\mathbb{P}$ with $\Hd K = s$. Evidently $K \subset A \times A^{2}$, so 
\begin{equation}\label{form100} K + K + K \subset (A + A + A) \times (A^{2} + A^{2} + A^{2}). \end{equation}
If $A$ were a ring, then the right hand side would be contained in $A \times A$, and hence $\Hd (K + K + K) \leq \Hd (A \times A)$. We finally claim that that $\Hd (A \times A) = 2s$, which will contradict Corollary \ref{mainCor}. This follows from a folklore result on the dimension of orthogonal projections: if $K \subset \R^{2}$ is Borel, then $\Hd \{e \in S^{1} : \Hd \pi_{e}(K) < \tfrac{1}{2}\Hd K\} = 0$. For a proof, see \cite[Theorem 1.2]{MR2994685}. In particular, since $\Hd A > 0$, and $A$ is a ring, we have $s = \Hd A = \Hd (A + aA) \geq \tfrac{1}{2}\Hd(A \times A)$ for some $a \in A$.

We close this section by mentioning a related result of Raz and Zahl \cite[Theorem 1.14]{2021arXiv210807311R}. A special case of their theorem shows that if $A \subset [0,1]$ is a $(\delta,s,\delta^{-\epsilon})$-set with $s \in (0,1)$ (see Definition \ref{deltaSSet}) and $|A|_{\delta} \geq \delta^{-s + \epsilon}$, and $\epsilon = \epsilon(s) > 0$ is small enough, then 
\begin{displaymath} \max\{|A + A|_{\delta},|A^{2} + A^{2}|_{\delta}\} \geq \delta^{-s - \epsilon}. \end{displaymath}
The proof of Corollary \ref{mainCor} gives an alternative argument for this fact. Indeed, if $|A + A|_{\delta} \leq \delta^{-s - \epsilon} \leq \delta^{-2\epsilon}|A|_{\delta}$ and $|A^{2} + A^{2}|_{\delta} \leq \delta^{-s - \epsilon} \leq \delta^{-2\epsilon}|A|_{\delta}$, then it follows from the Pl\"unnecke's inequality (see \cite{MR266892} for the original reference, or \cite[Corollary 6.28]{MR2289012} for a textbook), that also 
\begin{displaymath} |A + A + A|_{\delta} \lesssim \delta^{-s - O(\epsilon)} \quad \text{and} \quad |A^{2} + A^{2} + A^{2}|_{\delta} \lesssim \delta^{-s - O(\epsilon)}. \end{displaymath}
Consequently, $|(A + A + A) \times (A^{2} + A^{2} + A^{2})|_{\delta} \lesssim \delta^{-2s - O(\epsilon)}$. Given the inclusion \eqref{form100}, this contradicts a $\delta$-discretised version of Corollary \ref{mainCor} for $\epsilon > 0$ small enough, depending only on $s \in (0,1)$ (the required $\delta$-discretised version follows from Theorem \ref{main}, using the easier part of Lemma \ref{lemma9}.) We note, however, that the theorem of Raz and Zahl concerns far more general non-linear images of $A \times A$ than just $A^{2} + A^{2}$.

\subsection{Value of $\epsilon$?}\label{s:epsilonValue} We do not know what the precise value of "$\epsilon$" should be for every pair $(p,s)$ with $s \in [0,1]$ and $p > 4$. However, the following conjecture seems plausible:
\begin{conjecture} For every $0 \leq s \leq 1$ and $\epsilon > 0$, there exists $p = p(\epsilon,s) \geq 1$ such that the following holds. Let $\mu$ be a Borel measure on $\mathbb{P}$ satisfying $\mu(B(x,r)) \leq r^{s}$ for all $x \in \R^{2}$ and $r > 0$. Then, 
\begin{equation}\label{form83} \|\hat{\mu}\|_{L^{p}(B(R))} \leq C_{\epsilon,s}R^{[2 - \min\{3s,1 + s\}]/p + \epsilon}, \qquad R \geq 1. \end{equation}
\end{conjecture}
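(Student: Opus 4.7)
The plan is to convert the conjectured Fourier estimate into a question about $n$-fold additive energies of $\delta$-discretised subsets of $\mathbb{P}$, and then adapt Mudgal's inductive additive-combinatorics machinery \cite{2020arXiv200809247M}. Since the conjecture allows $p = 2n$ to depend on $\epsilon$, the integer $n$ can serve as the depth of an induction, just as it does in the discrete bound $E_k(P) \lesssim_k |P|^{2k - 3 + o_k(1)}$.

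First, Plancherel converts the $L^{2n}$-bound into an $L^2$-bound on iterated convolutions: $\|\hat\mu_\delta\|_{L^{2n}}^{2n} = \|\mu_\delta^{*n}\|_{L^2}^2$. After discretising $\mu$ to a $(\delta, s)$-set $P \subset \mathbb{P}$ of cardinality $N \approx \delta^{-s}$, the right-hand side is comparable to $N^{-2n}\delta^{-2}\, E_n^\delta(P)$, where $E_n^\delta(P)$ denotes the $\delta$-discretised $n$-fold additive energy of $P$. The conjecture is therefore equivalent to proving
\begin{displaymath}
 E_n^\delta(P) \lesssim_\epsilon N^{2n}\,\delta^{\min\{3s,\, 1 + s\} - \epsilon}
\end{displaymath}
for $n = n(\epsilon, s)$ large enough. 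In the regime $s \leq 1/2$ this unwinds to $E_n^\delta(P) \lesssim \delta^{-s(2n - 3) - \epsilon}$, the direct fractal analogue of Mudgal's estimate with $|P| = \delta^{-s}$; for $s \geq 1/2$ it reads $E_n^\delta(P) \lesssim \delta^{1 - (2n - 1)s - \epsilon}$, a bound whose sharp shape reflects that the $\delta$-neighbourhood of $n\mathbb{P}$ has area $\approx \delta$. For $s \leq 1/2$ I would replicate Mudgal's bootstrap: use the $L^6$ estimate of Theorem \ref{main} (or Bourgain--Demeter's $E_3 \lesssim |P|^{7/2 + \epsilon}$) as the base step, and at each iteration apply Pl\"unnecke's inequality together with a cap decomposition on $\mathbb{P}$ to trim a constant fraction off the gap between the trivial exponent $2n - 1$ and the target $2n - 3$. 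For $s \geq 1/2$ I would instead feed a high-$n$ multilinear iteration of the point-circle incidence estimate underlying Theorem \ref{main} into the same reduction; since the relevant $(s, 2s)$-Furstenberg set saturates the ambient dimension once $s \geq 1/2$, the required sharp incidence bound should then be accessible by a Wolff-type argument.

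The hard part is transporting Mudgal's induction to $(\delta, s)$-sets. His argument repeatedly invokes asymmetric higher-energy inequalities of Katz--Koester and Schoen--Shkredov type that depend on having exact tripling constants, whereas in the fractal setting the analogues hold only up to powers of $\log(1/\delta)$, and these losses must not compound into a power of $\delta^{-1}$ after $\Theta(\log(1/\delta))$ iterations. My expected route is to carry out the whole argument inside a single $\delta$-discretised additive combinatorics framework, tracking logarithmic losses carefully and using the $L^6$ bound of Theorem \ref{main} to supply an initial $\delta^{c}$ gain that the induction can amplify; whether this can be done without a genuinely new combinatorial input is, in my view, the central open question behind the conjecture.
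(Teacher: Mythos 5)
The statement you are proving is labelled a \emph{Conjecture} in the paper; the author offers no proof, only the example below it showing that the exponent $\min\{3s,1+s\}$ cannot be improved. So there is no paper proof to compare against, and your write-up is (rightly) framed as a strategy sketch rather than a solution. Within that framing, the Plancherel reduction and the numerology are correct: with $\mu_\delta \approx N^{-1}\sum_{p\in P}\delta^{-2}\mathbf{1}_{B(p,\delta)}$ and $N\approx\delta^{-s}$, one indeed gets $\|\widehat{\mu_\delta}\|_{L^{2n}}^{2n}\approx N^{-2n}\delta^{-2}E_n^{\delta}(P)$, so the conjecture at $p=2n$ reduces (after the paper's pigeonholing to extremal $\kappa$-levels) to $E_n^{\delta}(P)\lesssim N^{2n}\delta^{\min\{3s,1+s\}-\epsilon}$. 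One small slip: the heuristic ``the $\delta$-neighbourhood of $n\mathbb{P}$ has area $\approx\delta$'' is wrong for $n\geq 2$ --- that set has area $\approx 1$; the correct source of the $1+s$ threshold is the arithmetic-progression example spelled out in Section \ref{s:epsilonValue} of the paper.

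The substantive gap is that the ``base step'' you propose is not available. Mudgal's bound $E_k(P)\lesssim_k|P|^{2k-3+o_k(1)}$, the Bourgain--Demeter bound $E_3(P)\lesssim|P|^{7/2+\epsilon}$, and the decoupling estimate $E_3(P)\lesssim_\epsilon\delta^{-\epsilon}|P|^3$ all concern the \emph{exact} additive energy of finite subsets of $\mathbb{P}$. What the reduction produces is the \emph{$\delta$-discretised} energy of a $(\delta,s)$-set, and these two quantities are very different: for a full $\delta$-net $P\subset\mathbb{P}$ one has $E_3^{\delta}(P)\gtrsim\delta^{-4}\gg|P|^{3}$, so no exact-energy bound transfers. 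Concretely, the conjecture at $p=6$ for $s\leq\tfrac12$ demands $E_3^{\delta}(P)\lesssim N^{3+o(1)}$ for $(\delta,s)$-sets with $N\approx\delta^{-s}$, whereas the paper's Theorem \ref{thm2} --- after the full reduction to the $(s,2s)$-Furstenberg problem and Theorem \ref{DBZ} --- only achieves the $\epsilon$-improvement $E_3^{\delta}\lesssim\delta^{-4s+\epsilon}=N^{4-\epsilon'}$ over the transversality-trivial bound. Getting from $N^{4}$ to $N^{3}$ is a full power of $N$ and is precisely the (unsolved) $\delta$-discretised incidence problem; it is not a matter of ``tracking log losses'' through Mudgal's iteration, and Pl\"unnecke will not supply it either, since Pl\"unnecke controls sumset growth but not energies without a strong $E_2$-type input (which in the discretised setting is the hard Wolff circle/tangency problem you gesture at for $s\geq 1/2$). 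Your closing sentence --- that carrying this out likely needs a genuinely new combinatorial input --- is the accurate diagnosis; I would only sharpen it by noting that the obstruction appears already at the base $n=3$, before any iteration begins.
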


It is not hard to see that the threshold $\min\{3s,1 + s\}$ cannot be further improved: 

\begin{ex} Consider a set $A \subset [0,1] \cap (\delta^{s}\Z)(\delta)$ which is a union of $\sim \delta^{-s}$ equally spaced intervals of length $\delta > 0$ in arithmetic progression. Then $A^{2} \subset [0,1] \cap (\delta^{2s}\Z)(\delta)$ can be covered by a union of $\sim \delta^{-2s}$ intervals with spacing $\delta^{2s}$, again in arithmetic progression. Therefore $kA^{2}$ can also be covered by $\lesssim_{k} \delta^{-2s}$ intervals with the same spacing $\delta^{2s}$. It follows that
\begin{equation}\label{form82} |kA \times kA^{2}|_{\delta} \lesssim_{k} \delta^{-3s}, \qquad k \geq 1. \end{equation}
If $s \geq \tfrac{1}{2}$, we can do better: then we note that $kA^{2}$ is trivially covered by $\lesssim_{k} \delta^{-1}$ intervals of length $\delta$, hence
\begin{equation}\label{form84} |kA \times kA^{2}|_{\delta} \lesssim_{k} \delta^{-1 - s}, \qquad k \geq 1. \end{equation}
The bounds \eqref{form82}-\eqref{form84} show (with the assistance of Lemma \ref{lemma9}, and a version of \eqref{form100} for $k$-fold sums) that the exponent $\min\{3s,1 + s\}$ in \eqref{form83} cannot be lowered.
\end{ex}

\subsection{Proof and paper outlines} Below Theorem \ref{main}, we already explained the key points needed to prove the first part of Theorem \ref{main}. The details are contained in Section \ref{s:L4}. The proof of the second part is based on the following ingredients:
\begin{itemize}
\item Knowing that the first part is true: the exponent "$2 - 2s$" will serve as a "base camp" from which we reach out for the $\epsilon$-improvement for $p > 4$.

\item An observation due to Bombieri, Bourgain, and Demeter, \cite{MR3373053,MR3374964}: the $3^{rd}$ additive energy of subsets of $\mathbb{P}$ is closely related to an incidence-counting problem between points in $\R^{2}$, and circles centred along the $x$-axis. A $\delta$-discretised version of this result is formulated at the beginning of Section \ref{s:epsilon}.
\item If the exponent "$2 - 2s$" were sharp for some $p > 4$, it turns out that we could construct an \emph{$(s,2s)$-Furstenberg set} of dimension $2s$. Furstenberg sets are introduced in Section \ref{s:furstenberg}. In particular, it is known (due to Bourgain, and H\'era-Shmerkin-Yavicoli) that $(s,2s)$-Furstenberg sets of dimension $2s$ do not exist (for $s \in (0,1)$). This is where the $\epsilon$-improvement in Theorem \ref{main} comes from.
\item Based on the hypothetical sharpness of the exponent "$2 - 2s$" for $p > 4$, and the relation between additive energies and circle incidences, we first construct an "$(s,2s)$-Furstenberg of circles". This is done in Section \ref{circleFurstenberg}. Luckily, the ensuing circles are all centred along the $x$-axis. It turns out that the incidence geometry of such circles is equivalent to the incidence geometry of planar lines. This is because there exists a well-behaved map between the \emph{Poincar\'e half-plane model} and the \emph{Beltrami-Klein model} of hyperbolic geometry. This was explained to me by Josh Zahl. The details of the transformation are contained in Section \ref{circlesToLines}, where the proof of Theorem \ref{main} is finally concluded.
\end{itemize}

\subsection*{Acknowledgements} The article was mostly written at the Hausdorff Research Institute for Mathematics, Bonn, during the trimester \emph{Interactions between Geometric measure theory, Singular integrals, and PDE.} I would like to thank the institute and its staff for their generous hospitality. I am grateful to Keith Rogers for dozens of discussions, and no fewer ideas, on the project during the trimester. Keith's input was so substantial that he should be viewed as a co-author, at least if the reader has a positive opinion of the paper (however, I take full credit for mistakes). I am grateful to Josh Zahl for telling me how to transform point-circle incidences to point-line incidences, and for providing references.

\section{Preliminaries}

\subsection{Notation} The notation $B(x,r)$ stands for a closed ball of radius $r > 0$ and centre $x \in X$, in a metric space $(X,d)$. If $A \subset X$ is a bounded set, and $r > 0$, we write $|A|_{r}$ for the $\delta$-covering number of $A$, that is, the smallest number of closed balls of radius $r$ required to cover $A$. Cardinality is denoted $|A|$, and Lebesgue measure $\mathrm{Leb}(A)$. The closed $r$-neighbourhood of $A$ is denoted $A(r)$. Throughout the article, $\mathbb{P}$ denotes the truncated parabola $\mathbb{P} = \{(t,t^{2}) : t \in [-1,1]\}$, and $S^{1} \subset \R^{2}$ is the unit circle. The notation "$\pi$" (without subindex) refers to the projection $\pi(x,y) = x$. 

 \subsection{Furstenberg sets and $(\delta,s)$-sets}\label{s:furstenberg} We have already seen that Theorem \ref{main} implies the non-existence of Borel subrings of intermediate dimension. This suggests that the non-existence proofs could be useful in establishing Theorem \ref{main}. This turns out to be the case, although not in a completely straightforward fashion. 

In fact, we will prove Theorem \ref{main} by applying the non-existence of $(s,2s)$-Furstenberg sets of dimension $2s$. The connection between the Furstenberg set problem and the Borel subring problem was initially discovered by Katz and Tao \cite{MR1856956}, and has been thereafter applied several times to make progress in the Furstenberg set problem, see \cite{2021arXiv211208249D,HSY21,Orponen20,2021arXiv210603338O}. In the current paper, we may use the best current estimates on $(s,2s)$-Furstenberg sets as a black box. We record the necessary preliminaries now. 

\begin{definition}[$(\delta,s,C)$-set]\label{deltaSSet} Let $(X,d)$ be a metric space, let $\delta \in (0,1]$, and $C,s > 0$. A finite set $P \subset X$ is called a \emph{$(\delta,s,C)$-set} if 
\begin{displaymath} |P \cap B(x,r)| \leq C\left(\frac{r}{\delta} \right)^{s}, \qquad x \in X, \, r \geq \delta. \end{displaymath}
\end{definition} 
If the value of the constant "$C$" is not relevant, we may also write "$(\delta,s)$-set" in place of "$(\delta,s,C)$-set". We will need this notion in $X = \R^{d}$, and also in the space of all affine lines in $\R^{2}$, denoted $\mathcal{A}(2,1)$. We denote by $d_{\mathcal{A}(2,1)}$ the following metric on $\mathcal{A}(2,1)$:
\begin{displaymath} d_{\mathcal{A}(2,1)}(\ell_{1},\ell_{2}) := \|\pi_{1} - \pi_{2}\| + |a_{1} - a_{2}|, \end{displaymath}
where $\pi_{1},\pi_{2}$ are the orthogonal projections to the subspaces $L_{1},L_{2}$ parallel to $\ell_{1},\ell_{2}$, $\|\cdot\|$ is the operator norm, and $a_{j}$ is the unique point on $\ell_{j} \cap L_{j}^{\perp}$. The only property of the metric $d_{\mathcal{A}(2,1)}$ we need explicitly is that if $\ell_{1},\ell_{2} \in \mathcal{A}(2,1)$ are at distance $r := d_{\mathcal{A}(2,1)}(\ell_{1},\ell_{2})$, then
\begin{equation}\label{form78} \ell_{1} \cap B(1) \subset \ell_{2}(Cr) \end{equation}
for some absolute constant $C > 0$. Implicitly, we also need that the results quoted below for Furstenberg sets are valid with the choice of metric $d_{\mathcal{A}(2,1)}$, but  this requirement is compatible with \eqref{form78}. 

\begin{definition}[$(\delta,t,C)$-set of lines] A set of lines $\mathcal{L} \subset \mathcal{A}(2,1)$ is a $(\delta,t,C)$-set if it is a $(\delta,s,C)$-set in the metric space $(\mathcal{A}(2,1),d_{\mathcal{A}(2,1)})$. \end{definition}

We may now define $\delta$-discretised Furstenberg sets:

\begin{definition}[Discretised $(s,t)$-Furstenberg set] Let $0 < s \leq 1$ and $0 < t \leq 2$, $C > 0$, and $\delta \in (0,1]$.  A set $F \subset \R^{2}$ is a $\delta$-discretised $(s,t,C)$-Furstenberg set if there exists a $(\delta,t,C)$-set of lines with $|\mathcal{L}| \geq C^{-1}\delta^{-t}$ with the property that $F \cap \ell(\delta)$ contains a $(\delta,s,C)$-set of cardinality $\geq C^{-1}\delta^{-s}$ for all $\ell \in \mathcal{L}$. \end{definition} 

It is known that if $0 < s < 1$, $s < t \leq 2$, and $\epsilon = \epsilon(s,t) > 0$ is small enough, then every $\delta$-discretised $(s,t,\delta^{-\epsilon})$-Furstenberg set $E \subset \R^{2}$ satisfies $|E|_{\delta} \geq \delta^{-2s - \epsilon}$. This is a recent result of the first author with Shmerkin \cite{2021arXiv210603338O}. In the present paper, we only need the special case $t = 2s$, which was known much earlier: the case $s = \tfrac{1}{2}$ is due to Bourgain \cite{MR1982147} (modulo a slightly different definition of discretised Furstenberg sets), and the general case $s \in (0,1)$ is due to H\'era, Shmerkin, and Yavicoli \cite{HSY21}. In the case $t = 2s$, the best known constant "$\epsilon$" is actually fairly large, due to recent work of Di Benedetto and Zahl \cite{2021arXiv211208249D}. We quote their version of the result below:

\begin{thm}\label{DBZ} Let $c(s) := s(1 - s)/(6(155 + 68s))$. Then, for every $0 < s < 1$ and every $c < c(s)$ there exists $\epsilon > 0$ such that the following holds for all $\delta > 0$ sufficiently small. Let $F \subset \R^{2}$ be a $\delta$-discretised $(s,2s,\delta^{-\epsilon})$-Furstenberg set. Then, $|F|_{\delta} \geq \delta^{-2s - c}$. \end{thm}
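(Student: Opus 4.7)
Since Theorem \ref{DBZ} is quoted directly from Di Benedetto--Zahl \cite{2021arXiv211208249D} and serves in what follows only as a black box, I will sketch the general shape a proof would take, following the now-standard framework for the $(s,2s)$-Furstenberg set problem (Katz--Tao, Bourgain, H\'era--Shmerkin--Yavicoli, and the quantitative refinement of Di Benedetto--Zahl). The plan is to argue by contradiction: assume $F \subset \R^{2}$ is a $\delta$-discretised $(s,2s,\delta^{-\epsilon})$-Furstenberg set with associated $(\delta,2s,\delta^{-\epsilon})$-set of lines $\calL$, and suppose $|F|_{\delta} < \delta^{-2s - c}$ for some $c < c(s)$ and $\epsilon = \epsilon(c,s) > 0$ to be chosen last.

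First I would uniformise. By dyadic pigeonholing on incidence multiplicities, and at a cost of only $\delta^{-o(1)}$ factors, one replaces $(\calL,F)$ by a comparable sub-configuration in which each surviving line in $\calL$ meets $\sim \delta^{-s}$ of the $\delta$-balls in $F$, while each surviving point of $F$ lies in $\sim \delta^{c-s}$ of the $\delta$-neighbourhoods of lines in $\calL$ (this is the average multiplicity forced by the total incidence count $\gtrsim \delta^{-3s}$ and the assumed cardinality bound). The $(\delta,s)$- and $(\delta,2s)$-set hypotheses are preserved up to the $\delta^{-\epsilon}$ slack already built into the definition.

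Next I would extract a "rich" point $p \in F$ through which passes a large sub-family $\calL_{p} \subset \calL$ of lines, each still carrying a $(\delta,s)$-set of further points of $F$. Re-parametrising $\calL_{p}$ by its slopes relative to $p$ turns this data into a configuration that is essentially product-like: it realises a dense portion of $F$ near $p$ as $\{p + t \cdot v : v \in V,\, t \in T_{v}\}$ with $V$ a $(\delta,s)$-set of directions and each $T_{v}$ a $(\delta,s)$-set of parameters. The heart of the argument is then to combine this product structure with a discretised sum-product or Pl\"unnecke--Ruzsa step to derive an arithmetic incompatibility with the $(\delta,2s)$-set property of the original $\calL$ (this is also the step where the Katz--Tao connection to the Borel subring problem enters).

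The specific constant $c(s) = s(1-s)/(6(155 + 68s))$ is manufactured precisely at this stage: the factor $s(1-s)$ reflects the fact that no improvement survives at the endpoints $s \in \{0,1\}$, while the large denominator compounds the quantitative losses incurred at each incidence and Pl\"unnecke step. Finally $\epsilon$ must be chosen small enough (in terms of $c$ and $s$) to absorb all $\delta^{-o(1)}$ losses coming from pigeonholing and iterated Pl\"unnecke--Ruzsa, closing the contradiction. The principal obstacle, and the real content of \cite{2021arXiv211208249D}, is precisely the tight quantitative bookkeeping required to make the final exponent actually beat $-2s$ by the specified margin.
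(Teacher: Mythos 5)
The paper does not prove Theorem~\ref{DBZ}: it is cited verbatim from Di Benedetto--Zahl \cite{2021arXiv211208249D} and used strictly as a black box, as the text surrounding it makes explicit. You recognise this correctly, and your write-up is appropriately framed as a sketch of the standard Furstenberg-set framework rather than a proof. Since there is no ``paper's own proof'' to compare against, the only meaningful question is whether your sketch faithfully reflects the external argument. Here a few caveats are in order. Your central device --- extracting a rich point $p$, re-parametrising the lines through $p$ by slope, and realising a portion of $F$ as a product-like set $\{p + t\cdot v\}$ --- is closer in spirit to the older Katz--Tao / Bourgain reduction than to the route actually taken in \cite{2021arXiv211208249D}. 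Di Benedetto and Zahl instead work through a more streamlined bipartite incidence reduction and feed directly into the Guth--Katz--Zahl quantitative discretised sum-product theorem \cite{MR4283564} (which is where the specific constants in $c(s) = s(1-s)/(6(155+68s))$ originate, as the paper itself notes). Your explanation of the shape of $c(s)$ --- $s(1-s)$ vanishing at the endpoints, a large denominator from compounded losses --- is qualitatively sound, but the claim that the denominator comes from ``iterated Pl\"unnecke--Ruzsa'' losses is speculative and not an accurate description of where those particular constants arise. None of this affects the present paper, since Theorem~\ref{DBZ} is invoked only as an input; but if you were to actually write out the proof you would need to follow the reduction in \cite{2021arXiv211208249D} rather than the older rich-point route, which does not by itself produce a constant of this quality.
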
 

All the results on Furstenberg sets mentioned above are based on reductions to the \emph{discretised sum-product theorem}: if $A \subset [0,1]$ is a $(\delta,s)$-set of cardinality $\sim \delta^{-s}$, $s \in (0,1)$, then either $|A + A|_{\delta} \geq \delta^{-s - \epsilon}$ or $|A \cdot A|_{\delta} \geq \delta^{-s - \epsilon}$ for some $\epsilon = \epsilon(s) > 0$. This result is originally due to Bourgain \cite{MR1982147} from the early 2000s, but a more quantitative version was proven recently by Guth, Katz, and Zahl \cite{MR4283564}. This result (combined with a more efficient reduction) enabled Di Benedetto and Zahl to prove Theorem \ref{DBZ}. We do not attempt to quantify the constant "$\epsilon$" appearing in Theorem \ref{main}. Theorem \ref{main} is based on a reduction to Theorem \ref{DBZ}, but not a particularly straightforward one. The value of "$\epsilon$" we obtain in Theorem \ref{main} is anyway much smaller than the constant "$c(s)$" in Theorem \ref{DBZ}.

\section{First part of Theorem \ref{main}}\label{s:L4}

In this section, we establish the $L^{4}$-bound in Theorem \ref{main}. The proof is based on the use of wave packet decompositions, which we briefly define in the next section.

\subsection{Wave packet decomposition} The material in this section is standard, and we follow Demeter's book \cite[Exercise 2.7]{MR3971577}. We use the notation $w(x) := (1 + |x|)^{-100}$. If $T \subset \R^{2}$ is a rectangle, we write $w_{T} := w \circ A_{T}$, where $A$ is the affine map taking $T$ to $[-1,1]^{2}$.

\begin{proposition}\label{WPProp} Let $\mu$ be a finite Borel measure with $\spt \mu \subset \mathbb{P}$, and let $\delta > 0$. Let $\psi_{\delta}(\xi) := \xi^{-2}\psi(\xi/\delta)$, where $\psi \in C^{\infty}_{c}(\R^{2})$ is a bump function with the properties $0 \leq \psi \leq \mathbf{1}_{B(1)}$ and $\widehat{\psi} \gtrsim \mathbf{1}_{B(1)}$. Write $\mu_{\delta} := \mu \ast \psi_{\delta}$. Then $\spt \mu_{\delta} \subset \mathbb{P}(\delta)$.

Let $\Theta$ be a finitely overlapping cover of $\mathbb{P}(\delta)$ by rectangles "$\theta$" of dimensions roughly $\delta \times \delta^{1/2}$ (also known as "\emph{caps}"). Let $\{\varphi_{\theta}\}_{\theta \in \Theta}$ be a smooth partition of unity adapted to the cover $\Theta$, and let $\mu_{\theta} := \mu_{\delta}\varphi_{\theta}$ for $\theta \in \Theta$. Then $\mu_{\delta} = \sum_{\theta \in \Theta} \mu_{\theta}$, and consequently $\widehat{\mu_{\delta}} = \sum_{\theta \in \Theta} \widehat{\mu_{\theta}}$. 

Fix $\theta \in \Theta$. Let $\mathcal{T}_{\theta}$ be a tiling of $\R^{2}$ by rectangles "$T$" dual to $2\theta$, with dimensions roughly $\delta^{-1} \times \delta^{-1/2}$. Then, for each $T \in \mathcal{T}_{\theta}$ we can associate a function $W_{T} \in \mathcal{S}(\R^{2})$ with the following properties:
\begin{itemize}
\item[(W1)] $\spt \widehat{W_{T}} \subset 2\theta$, $\|W_{T}\|_{L^{2}} \sim 1$, and 
\begin{displaymath} |W_{T}| \lesssim_{N} \mathrm{Leb}(T)^{-1/2} \cdot (w_{T})^{N}, \qquad N \geq 1. \end{displaymath}
\item[(W2)] If $\{a_{T}\}_{T \in \mathcal{T}_{\theta}}$ is an arbitrary collection of complex numbers, then
\begin{displaymath} \Big\| \sum_{T \in \mathcal{T}_{\theta}} a_{T}W_{T}\Big\|_{L^{2}}^{2} \sim \sum_{T \in \mathcal{T}_{\theta}} |a_{T}|^{2}. \end{displaymath}
\item[(W3)] If $F \in L^{2}(\R^{2})$ with $\spt F \subset \theta$, then
\begin{displaymath} \widehat{F} = \sum_{T \in \mathcal{T}_{\theta}} \langle \widehat{F},W_{T} \rangle W_{T}, \end{displaymath}
where $\langle \cdot,\cdot \rangle$ refers to inner product in $L^{2}$. 
\end{itemize}
In particular, it follows from property \textup{(W3)}, and $\spt \mu_{\theta} \subset \theta$, that 
\begin{equation}\label{form90} \widehat{\mu_{\delta}} = \sum_{\theta \in \Theta} \widehat{\mu_{\theta}} = \sum_{\theta \in \Theta} \sum_{T \in \mathcal{T}_{\theta}} a_{T}W_{T}, \qquad a_{T} = \langle \widehat{\mu_{\theta}},W_{T} \rangle \text{ for } T \in \mathcal{T}_{\theta}. \end{equation}
The representation \eqref{form90} is known as the \emph{wave packet decomposition of $\mu$ at scale $\delta$}.
\end{proposition}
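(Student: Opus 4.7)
The first claim, $\spt \mu_{\delta} \subset \mathbb{P}(\delta)$, is immediate: the bump $\psi_{\delta}$ is supported in $B(\delta)$ (since $0 \leq \psi \leq \mathbf{1}_{B(1)}$ forces $\spt \psi \subset B(1)$), and convolution enlarges the support of $\mu \subset \mathbb{P}$ by at most $\spt \psi_{\delta}$.

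I would construct the wave packets on the Fourier side. Fix $\theta \in \Theta$, let $\phi_{\theta} \in C^{\infty}_{c}(\R^{2})$ satisfy $\mathbf{1}_{\theta} \leq \phi_{\theta} \leq \mathbf{1}_{2\theta}$, and arrange the tiling $\calT_{\theta}$ so that the centres $\xi_{T}$ form a lattice $L_{\theta}$ dual to $2\theta$ (hence with mesh $\sim \delta^{-1/2}$ in one direction and $\sim \delta^{-1}$ in the other, matching the stated dimensions of $T$). Set
\begin{displaymath} \widehat{W_{T}}(x) := |\theta|^{-1/2}\, \phi_{\theta}(x)\, e^{-2\pi i x \cdot \xi_{T}}, \qquad T \in \calT_{\theta}. \end{displaymath}
Property (W1) is then verified directly: $\spt \widehat{W_{T}} \subset 2\theta$ by construction, Plancherel gives $\|W_{T}\|_{L^{2}} = \|\widehat{W_{T}}\|_{L^{2}} \sim 1$, and repeated integration by parts (exploiting that $\phi_{\theta}$ is adapted to $\theta$) yields $|\widehat{\phi_{\theta}}(\xi)| \lesssim_{N} |\theta| \cdot w_{T_{0}}(\xi)^{N}$. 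Since $|T| = 1/|\theta|$, taking the inverse Fourier transform gives $|W_{T}(\xi)| = |\theta|^{-1/2}|\widehat{\phi_{\theta}}(\xi - \xi_{T})| \lesssim_{N} |T|^{-1/2} w_{T}(\xi)^{N}$.

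For (W2), Plancherel reduces the claim to
\begin{displaymath} |\theta|^{-1}\int |\phi_{\theta}(x)|^{2} \Big|\sum_{T} a_{T} e^{-2\pi i x \cdot \xi_{T}}\Big|^{2}\, dx \;\sim\; \sum_{T} |a_{T}|^{2}. \end{displaymath}
Because the frequencies $\{\xi_{T}\} = L_{\theta}$ are dual to $2\theta$, the exponentials $\{e^{-2\pi i x \cdot \xi_{T}}\}$ are orthogonal on any fundamental domain of $L_{\theta}^{\ast}$; expanding the square and using that $|\phi_{\theta}|^{2} \leq \mathbf{1}_{2\theta}$ while $\int |\phi_{\theta}|^{2} \sim |\theta|$, the cross terms contribute negligibly and the diagonal delivers $\sim \sum_{T} |a_{T}|^{2}$.

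The reconstruction (W3) is the subtlest step, and I expect this to be the main obstacle. Given $F \in L^{2}(\R^{2})$ with $\spt F \subset \theta$, the function $\hat{F}$ lies in the closed subspace $H_{2\theta} := \{G \in L^{2}(\R^{2}) : \spt \widehat{G} \subset 2\theta\}$ (after symmetrising $\theta$ to ensure $-\theta \subset 2\theta$). The family $\{W_{T}\}_{T \in \calT_{\theta}}$ is a tight frame for $H_{2\theta}$: this is the Shannon--Whittaker sampling identity attached to the dual lattice $L_{\theta}$, equivalent to the statement that $\{|2\theta|^{-1/2} e^{-2\pi i x \cdot \xi_{T}}\}_{T}$ is an orthonormal basis of $L^{2}(2\theta)$. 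Combined with (W2), this yields
\begin{displaymath} \hat{F} = \sum_{T} \langle \hat{F}, W_{T}\rangle\, W_{T},\end{displaymath}
which is (W3). The decomposition \eqref{form90} follows by applying (W3) to each $\mu_{\theta} := \mu_{\delta}\varphi_{\theta}$ (whose support lies in $\theta$) and summing over $\theta \in \Theta$.
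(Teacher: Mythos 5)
The paper does not actually prove this proposition: it is stated as standard material and the reader is pointed to Demeter's book, Exercise 2.7 in \cite{MR3971577}. So there is no ``paper proof'' to compare against. Your construction---modulated bumps $\widehat{W_T}(x) = |\theta|^{-1/2}\phi_\theta(x)e^{-2\pi i x\cdot\xi_T}$ with $\xi_T$ running over a lattice adapted to $\theta$---is indeed the standard one the cited exercise has in mind, and (W1) is handled cleanly.

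Two spots deserve more care than you give them, though neither is fatal. For (W2), after Plancherel the off-diagonal terms are $\sum_{T\neq T'} a_T\bar a_{T'}\cdot|\theta|^{-1}\widehat{|\phi_\theta|^2}(\xi_T-\xi_{T'})$; ``the cross terms contribute negligibly'' is the whole content of the lower bound and needs justification. The clean route is either a Schur test combined with a sufficiently flat choice of $\phi_\theta$, or (more elegantly) to take $|\phi_\theta|^2 = \eta_\theta*\eta_\theta$ for a real even $\eta_\theta$, so that $\widehat{|\phi_\theta|^2}\geq 0$ and Poisson summation bounds the total off-diagonal contribution by a strict fraction of the diagonal. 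Without such an argument the claimed $\sim$ in (W2) is not established. For (W3), your appeal to Shannon sampling is correct in spirit, but the oversampled lattice (frequencies dual to $2\theta$ while $\spt F\subset\theta$) produces a tight frame with frame constant equal to the oversampling ratio, not $1$; so the identity you write down holds only after rescaling $W_T$ by a fixed numerical factor $\sim 1$. This harmless renormalization should be made explicit, since otherwise (W3) as stated is off by that constant; after rescaling, (W1) and (W2) still hold with comparable implicit constants.
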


\subsection{A non-concentration estimate for wave packets}

The following lemma shows that if $\mu$ is an $s$-dimensional Frostman measure on $\mathbb{P}$, then the wave packet decomposition of $\mu$ at scale $\delta \in (0,1]$ satisfies a $1$-dimensional non-concentration condition, regardless of $s \in [0,1]$. A more precise statement is \eqref{form91}: the "$1$-dimensionality" is visible in the exponent $\Delta = \Delta^{1}$. Why $1$-dimensional, and not $s$-dimensional? The heuristic (and imprecise) reason is the following. Since the measure $\mu$ is $s$-dimensional, the directions of the wave packets indeed satisfy an $s$-dimensional non-concentration condition. However, since $\mu_{\theta}$ is $s$-dimensional for each $\theta \in \Theta$, the wave packets associated to a fixed $\theta \in \Theta$ satisfy a $(1 - s)$-dimensional non-concentration condition (see \eqref{form92}). Finally, $1 = s + (1 - s)$.

%This may sound strange, but as illustrative examples, consider the wave packet decompositions of the measures $\mu_{0} = \delta_{0}$ and $\mu_{1} = \mathcal{H}^{1}|_{\mathbb{P}}$. Very informally speaking, the decomposition of $\mu_{0}$ consists of $R^{1/2}$ vertical rectangles of dimensions $R^{1/2} \times R$. The decomposition of $\mu_{1}$ also consists of $R^{1/2}$ rectangles, but this time they form a "bush" at $0$. Notably, both rectangle families are "$1$-dimensional".

\begin{lemma}\label{lemma1} Let $s \in [0,1]$, $\delta \in (0,1]$, and let $\mu$ be a Borel measure on parabola $\mathbb{P}$ satisfying $\mu(B(x,r)) \leq Cr^{s}$ for all $x \in \mathbb{P}$ and $r > 0$. Let $\delta > 0$, and let $\{a_{T}\}_{T \in \mathcal{T}_{\theta}}$, $\theta \in \Theta$, be the coefficients in the wave packet decomposition of $\mu$ at scale $\delta$. Fix $\epsilon > 0$, and let $S \subset \R^{2}$ be any rectangle with dimensions $\Delta \times R^{1 + \epsilon}$, where $R = \delta^{-1}$, and $R^{1/2} \leq \Delta \leq R^{1 + \epsilon}$. Then,
\begin{equation}\label{form91} \sum_{T \subset S} |a_{T}|^{2} \lesssim_{\epsilon} \Delta \cdot \delta^{s - 1 - 4\epsilon}. \end{equation}
\end{lemma}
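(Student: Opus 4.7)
The proof proceeds by a localised $L^{2}$ argument with three main ingredients: a weighted Bessel step that converts the wave packet sum to an integral of $|\widehat{\mu_\theta}|^{2}$ near $S$, a directional restriction to the caps $\theta$ whose tubes can actually fit in $S$, and a tensor-product factorisation of $\mu_\theta$ exploiting the near-flatness of $\mathbb{P}$ inside a cap, which reduces matters to a one-dimensional $s$-energy estimate in the tangent direction.

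For the Bessel step, property (W2) shows that $\{W_{T}\}_{T \in \mathcal{T}_\theta}$ is essentially orthonormal and (W1) that each $W_{T}$ decays rapidly off the rectangle $T$. A weighted Cauchy--Schwarz gives $|a_{T}|^{2} \lesssim \int |\widehat{\mu_\theta}|^{2} w_{T}^{N}\,dx$, where $w_{T}$ is a smooth bump adapted to $T$; summing over $T \in \mathcal{T}_\theta$ with $T \subset S$ (using that the $w_{T}^{N}$ have bounded overlap and are collectively supported on a bounded enlargement of $S$) yields
\[\sum_{T \in \mathcal{T}_\theta,\ T \subset S} |a_{T}|^{2} \lesssim \int |\widehat{\mu_\theta}|^{2}\,w_{2S}\,dx.\]
The long axis of any $T \in \mathcal{T}_\theta$ is the outward normal to $\mathbb{P}$ at the centre of $\theta$, so if $T$ (of dimensions $R^{1/2}\times R$) lies in $S$, that normal must make an angle $\lesssim \Delta/R$ with the long axis of $S$. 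The corresponding caps form a subfamily $\Theta_{S}$ parametrising an arc $\mathbb{P}_{S} \subset \mathbb{P}$ of length $\lesssim \Delta/R = \Delta\delta$, and the Frostman hypothesis yields $\mu(\mathbb{P}_{S}) \lesssim \min\{1,(\Delta\delta)^{s}\}$.

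Fix $\theta \in \Theta_{S}$ and use local coordinates $(t,n)$ tangent and normal to $\mathbb{P}$ at the centre of $\theta$. Since the tangent length of $\theta$ is $\delta^{1/2}$, the curvature of $\mathbb{P}$ contributes only an $O(\delta)$ deviation, absorbed into the $\delta$-thickness of $\theta$ in the normal direction. Hence $\mu_\theta$ factors, up to harmless error, as $f_\theta(t)\,\varphi_\delta(n)$, where $\varphi_\delta$ is a unit-mass $\delta$-scale bump and $f_\theta$ is an $s$-Frostman measure on the tangent arc with $\|f_\theta\|_{1} = \mu(\theta)$. In dual local coordinates this gives $\widehat{\mu_\theta}(x_{t},x_{n}) \approx \hat f_\theta(x_{t})\,\widehat{\varphi_\delta}(x_{n})$; meanwhile $S$, in these coordinates, is the rectangle $[-\Delta,\Delta] \times [-R^{1+\epsilon},R^{1+\epsilon}]$ up to a tilt of size $\Delta/R$. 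Plancherel in the normal direction bounds $\int_{|x_{n}| \leq R^{1+\epsilon}} |\widehat{\varphi_\delta}|^{2}\,dx_{n} \lesssim \|\varphi_\delta\|_{2}^{2} \lesssim \delta^{-1}$, and the standard one-dimensional $s$-energy estimate applied to the $s$-Frostman measure $f_\theta$ of mass $\mu(\theta)$ (use $I_{s}(f_\theta) \lesssim \mu(\theta)$ combined with $|\xi|^{s-1} \geq \Delta^{s-1}$ for $|\xi| \leq \Delta$; cf.\ \cite[Ch.~8]{zbMATH01249699}) yields, for $\Delta \geq R^{1/2}$,
\[\int_{|x_{t}| \leq \Delta} |\hat f_\theta(x_{t})|^{2}\,dx_{t} \lesssim \Delta^{1-s}\,\mu(\theta).\]

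Putting everything together,
\[\sum_{T \subset S} |a_{T}|^{2} \lesssim \Delta^{1-s}\delta^{-1}\sum_{\theta \in \Theta_{S}}\mu(\theta) \lesssim \Delta^{1-s}\delta^{-1}\,\mu(\mathbb{P}_{S}) \lesssim \Delta^{1-s}\delta^{-1}\min\{1,(\Delta\delta)^{s}\} \leq \Delta\,\delta^{s-1},\]
which is the claimed bound. The $\delta^{-4\epsilon}$ slack absorbs losses from the smooth weight $w_{2S}$, from the $R^{1+\epsilon}$ over-extension of $S$ along its long side, from the tilt of angle $\Delta/R$ between the local frame of $\theta \in \Theta_{S}$ and the axes of $S$, and from the logarithmic divergence in $I_{s}(f_\theta) \lesssim \mu(\theta)$ at the $\delta$-scale. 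The main technical hurdle is making the tensor-product factorisation $\mu_\theta \approx f_\theta \otimes \varphi_\delta$ rigorous in the presence of the parabola's curvature, and verifying that the small rotation between $S$ and the $(t,n)$-frame does not spoil the one-dimensional energy bound when applied to the tilted rectangle.
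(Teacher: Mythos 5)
Your proposal is correct in outline and the answer comes out right, but the crucial middle step is handled by a genuinely different and more delicate route than the one in the paper. Both arguments begin the same way (a near-Bessel step to reduce $\sum_{T \subset S}|a_{T}|^{2}$ to a smoothed $\int_{S}|\widehat{\mu_{\theta}}|^{2}$, plus the observation that the caps $\theta$ contributing to $S$ occupy an arc of length $\lesssim \delta\Delta$ whose $\mu$-mass is $\lesssim (\delta\Delta)^{s}$) and end the same way. Where you factorise $\mu_{\theta}$ into a tangent component $f_{\theta}$ and a normal bump $\varphi_{\delta}$, apply the one-dimensional $s$-energy bound $\int_{|\xi|\leq\Delta}|\hat f_{\theta}|^{2}\lesssim\Delta^{1-s}\mu(\theta)$, and take Plancherel in the normal direction, the paper avoids any such decomposition entirely. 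It instead introduces a smooth cutoff $\eta_{\mathcal{S}}$ on an $R^{\epsilon}$-inflation $\mathcal{S}$ of $S$, and uses Plancherel to write the relevant quantity as $\|\mu_{\theta}\ast\widehat{\eta_{\mathcal{S}}}\|_{L^{2}}^{2}$. The key observation is that the dual rectangle of $\mathcal{S}$ (dimensions $\sim\delta^{\epsilon}\Delta^{-1}\times\delta^{1+2\epsilon}$) fits inside the ball $B(\Delta^{-1})$, so $|\widehat{\eta_{\mathcal{S}}}|\lesssim\mathrm{Leb}(\mathcal{S})\cdot w_{B(\Delta^{-1})}$ and the two-dimensional Frostman condition gives $\|\mu_{\theta}\ast w_{B(\Delta^{-1})}\|_{L^{\infty}}\lesssim\Delta^{-s}$ directly, without ever separating tangent from normal. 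This buys genuine simplicity: it sidesteps exactly the two issues you flag as your ``main technical hurdle,'' namely the approximate nature of $\mu_{\theta}\approx f_{\theta}\otimes\varphi_{\delta}$ in the presence of curvature (the height deviation on a $\delta^{1/2}$-arc is of order $\delta$, \emph{comparable} to the thickness of the cap, so the error in the factorisation is not small relative to the main term and would need a Taylor-expansion or rescaling argument to control) and the tilt of angle $\lesssim\Delta\delta$ between the local $(t,n)$-frame and the axes of $S$. Your idea is sound, and your accounting of the losses absorbed by $\delta^{-4\epsilon}$ is thoughtful, but the factorisation step is left as an acknowledged gap; the paper's dual-rectangle observation makes it unnecessary.
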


\begin{proof} Fix a rectangle $S \subset \R^{2}$ as in the statement, and let $\Theta_{S} \subset \Theta$ be the caps with the property that $T \subset S$ for at least one rectangle $T \in \mathcal{T}_{\theta}$. Since the rectangles $T \in \mathcal{T}_{\theta}$ have longer side of length $R$, every $\theta \in \Theta_{S}$ lies inside a subset $J(\delta) \subset \mathbb{P}(\delta)$ of diameter $\diam(J(\delta)) \lesssim \Delta/R = \delta \Delta$ by elementary geometry. Therefore, 
\begin{displaymath} \sum_{T \subset S} |a_{T}|^{2} = \sum_{\theta \subset J(\delta)} \mathop{\sum_{T \in \mathcal{T}_{\theta}}}_{T \subset S} |a_{T}|^{2}. \end{displaymath}
To proceed, fix $\theta \in J(\delta)$. We create a bit of separation between the rectangles $T \in \mathcal{T}_{\theta}$ with the following trick. Partition $\mathcal{T}_{\theta}$ into $N \sim R^{\epsilon} = \delta^{-\epsilon}$ collections $\mathcal{T}_{\theta,j}$, $1 \leq j \leq N$, such that $\dist(T,T') \gtrsim R^{\epsilon}$ for all distinct $T,T' \in \mathcal{T}_{\theta,j}$ (for $1 \leq j \leq N$ fixed). We claim that
\begin{equation}\label{form92} \mathop{\sum_{T \in \mathcal{T}_{\theta,j}}}_{T \subset S} |a_{T}|^{2} \lesssim_{\epsilon} \Delta^{1 - s} \cdot \delta^{-1 - 3\epsilon}\mu(2\theta), \qquad 1 \leq j \leq N. \end{equation}
Once \eqref{form92} has been established, the proof of \eqref{form91} is concluded by summing over $1 \leq j \leq N$ and $\theta \in J(\delta)$, and finally using the $s$-Frostman estimate $\mu(2J(\delta)) \lesssim (\delta \Delta)^{s}$:
\begin{displaymath} \sum_{T \subset S} |a_{T}|^{2} \lesssim_{\epsilon} \Delta^{1 - s} \cdot \delta^{-1 - 4\epsilon} \sum_{\theta \in J(\delta)} \mu(2\theta) \lesssim \Delta^{1 - s} \cdot \delta^{-1 - 4\epsilon} \mu(2J(\delta)) \lesssim \Delta \cdot \delta^{s -1 - 4\epsilon}. \end{displaymath} 
 To prove \eqref{form92}, let $\mathcal{S}$ be a rectangle which is concentric with $S$, but inflated by a factor $R^{\epsilon}$ in both directions. Thus $\mathcal{S}$ is a rectangle of dimensions $R^{\epsilon}\Delta \times R^{1 + 2\epsilon}$. Then, let $\eta_{\mathcal{S}} \in C^{\infty}_{c}(\R^{2})$ be a bump function satisfying $\mathbf{1}_{\mathcal{S}} \leq \eta_{\mathcal{S}} \leq \mathbf{1}_{2\mathcal{S}}$. We observe that if $T \subset S$, then
 \begin{displaymath} |a_{T}| = |\langle \widehat{\mu_{\theta}},W_{T} \rangle| \leq |\langle \widehat{\mu_{\theta}}\eta_{\mathcal{S}},W_{T} \rangle| + \|W_{T}\|_{L^{1}(\R^{2} \, \setminus \, \mathcal{S})} \lesssim_{\epsilon}  |\langle \widehat{\mu_{\theta}}\eta_{\mathcal{S}},W_{T} \rangle| + \delta, \end{displaymath}
 using that $\|\widehat{\mu_{\theta}}\|_{L^{\infty}} \leq \|\mu_{\theta}\|_{L^{1}} \leq 1$, and the rapid decay $|W_{T}| \lesssim_{\epsilon} (w_{T})^{1/\epsilon}$ outside $T \subset S$, stated in Proposition \ref{WPProp}(W1). Taking further into account the (very crude) estimate $|\{T \in \mathcal{T}_{\theta} : T \subset S\}| \lesssim R = \delta^{-1}$, we find that
\begin{equation}\label{form94} \mathop{\sum_{T \in \mathcal{T}_{\theta,j}}}_{T \subset S} |a_{T}|^{2} \lesssim_{\epsilon} \mathop{\sum_{T \in \mathcal{T}_{\theta,j}}}_{T \subset S} |\langle \widehat{\mu_{\theta}}\eta_{\mathcal{S}},W_{T} \rangle|^{2} + \delta. \end{equation}
To estimate the the main term in \eqref{form94}, we apply the abstract inequality
\begin{displaymath} \sum |\langle x,e_{k} \rangle|^{2} \leq \|x\|^{2} + \sum_{k \neq l} \langle x,e_{k} \rangle\langle x,e_{l} \rangle \langle e_{k},e_{l} \rangle, \end{displaymath}
valid for all inner product spaces $(H,\langle \cdot,\cdot \rangle)$, all $x \in H$, and all finite sequences of unit vectors $\{e_{k}\}_{k} \subset H$. (Proof: expand the inequality $0 \leq \|x - \sum \langle x,e_{k} \rangle e_{k} \|^{2}$.) Applying this to $x = \widehat{\mu_{\theta}}\eta_{\mathcal{S}} \in L^{2}$, and $e_{T} := W_{T}/\|W_{T}\|_{2}$, and recalling that $\|W_{T}\|_{2} \sim 1$, the result is
\begin{equation}\label{form93} \mathop{\sum_{T \in \mathcal{T}_{\theta,j}}}_{T \subset S} |\langle \widehat{\mu_{\theta}}\eta_{\mathcal{S}},W_{T} \rangle|^{2} \lesssim \|\widehat{\mu_{\theta}}\eta_{\mathcal{S}}\|_{L^{2}}^{2} + \mathop{\sum_{T,T' \in \mathcal{T}_{\theta,j}}}_{T,T' \subset S, \, T \neq T'} |\langle \widehat{\mu_{\theta}}\eta_{S},W_{T} \rangle\langle \widehat{\mu_{\theta}}\eta_{S},W_{T'} \rangle \langle W_{T},W_{T'} \rangle|. \end{equation} 
The number of terms in the second sum is $\lesssim R^{2}$, and also $|\langle \widehat{\mu_{\theta}}\eta_{\mathcal{S}},W_{T} \rangle| \lesssim \|W_{T}\|_{L^{1}} \lesssim R^{2}$.  These factors are negligible compared to the fact that $|\langle W_{T},W_{T'} \rangle| \lesssim_{\epsilon} R^{-10}$, which follows from $\dist(T,T') \geq R^{\epsilon}$ for distinct $T,T' \in \mathcal{T}_{\theta,j}$, and the rapid decay of $W_{T}$ outside $T$. Combining \eqref{form93} with \eqref{form94}, and using Plancherel, we find that
\begin{displaymath} \mathop{\sum_{T \in \mathcal{T}_{\theta,j}}}_{T \subset S} |a_{T}|^{2} \lesssim_{\epsilon} \|\mu_{\theta} \ast \widehat{\eta_{\mathcal{S}}}\|_{L^{2}}^{2} + \delta. \end{displaymath}
To estimate $\|\mu_{\theta} \ast \widehat{\eta_{\mathcal{S}}}\|_{L^{2}}^{2}$, we first record that $\widehat{\eta_{\mathcal{S}}}$ is essentially supported in the dual rectangle $\mathcal{S}^{\ast}$ of $\mathcal{S}$, which has dimensions $\delta^{\epsilon}\Delta^{-1} \times \delta^{1 + 2\epsilon}$. In particular, $\mathcal{S}^{\ast}$ fits inside the ball $B(\Delta^{-1})$ with room to spare. From this, we first deduce that
\begin{displaymath} |\widehat{\eta_{\mathcal{S}}}| \lesssim_{\epsilon} \mathrm{Leb}(\mathcal{S}) \cdot w_{B(\Delta^{-1})} \sim \Delta \cdot R^{1 + 3\epsilon} \cdot w_{B(\Delta^{-1})}. \end{displaymath}
Since $\|\mu_{\theta} \ast w_{B(\Delta^{-1})}\|_{L^{\infty}} \lesssim \Delta^{-s}$ by the $s$-Frostman assumption of $\mu$, we arrive at
\begin{displaymath} \|\mu_{\theta} \ast \widehat{\eta_{\mathcal{S}}}\|_{L^{\infty}} \lesssim \Delta \cdot R^{1 + 3\epsilon} \|\mu_{\theta} \ast w_{B(\Delta^{-1})}\|_{L^{\infty}} \lesssim \Delta^{1 - s} \cdot R^{1 + 3\epsilon}, \end{displaymath} 
and finally
\begin{displaymath} \|\mu_{\theta} \ast \widehat{\eta_{\mathcal{S}}}\|_{L^{2}}^{2} \lesssim \|\widehat{\eta_{\mathcal{S}}} \ast \mu_{\theta}\|_{L^{\infty}} \|\widehat{\eta_{\mathcal{S}}}\|_{L^{1}}\|\mu_{\theta}\|_{L^{1}} \lesssim \Delta^{1 - s} \cdot R^{1 + 3\epsilon}\mu(2\theta). \end{displaymath}
This concludes the proof of \eqref{form92}, and the lemma. \end{proof}

\subsection{Proof of the $L^{4}$-estimate}

In this section, we complete the proof of the first part of Theorem \ref{main}.

\begin{thm}\label{L4thm} Let $0 \leq s \leq 1$, and let $\mu$ be a Borel measure on $\Gamma = \{(t,t^{2}) : t \in [-1,1]\}$ satisfying $\mu(B(x,r)) \leq r^{s}$ for all $x \in \mathbb{P}$ and $r > 0$. Then, for every $\epsilon > 0$, there exists $C = C_{\epsilon,s} > 0$ such that 
\begin{displaymath} \|\hat{\mu}\|_{L^{4}(B(R))} \leq CR^{(1 - s)/2 + \epsilon}, \qquad R \geq 1. \end{displaymath} 
\end{thm}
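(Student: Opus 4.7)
My plan is to use the classical Córdoba--Fefferman $L^{4}$-orthogonality method for the parabola, adapted to the fractal $s$-Frostman setting via the wave packet decomposition of Proposition \ref{WPProp}. Setting $\delta := R^{-1}$, the target reduces to $\|\widehat{\mu_{\delta}}\|_{L^{4}(\R^{2})}^{4} \lessapprox \delta^{2s-2}$. I would first expand $\widehat{\mu_{\delta}} = \sum_{\theta \in \Theta} \widehat{\mu_{\theta}}$ using the cap partition, so that
\[
\|\widehat{\mu_{\delta}}\|_{4}^{4} = \sum_{\theta_{1},\ldots,\theta_{4}} \int (\mu_{\theta_{1}} \ast \mu_{\theta_{2}}) \overline{(\mu_{\theta_{3}} \ast \mu_{\theta_{4}})}
\]
after Plancherel. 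Parabolic curvature is what makes the method effective: with $\theta_{i}$ near $(t_{i},t_{i}^{2}) \in \mathbb{P}$, the physical supports $\theta_{1}+\theta_{2}$ and $\theta_{3}+\theta_{4}$ meet only if $\{t_{1},t_{2}\}$ coincides with $\{t_{3},t_{4}\}$ up to error $O(\delta^{1/2})$, leaving only $O(1)$ "diagonal" quadruples per pair $(\theta_{1},\theta_{2})$. Collecting terms, this reduces the goal to
\[
\|\widehat{\mu_{\delta}}\|_{4}^{4} \lesssim \sum_{\theta_{1},\theta_{2}} \|\widehat{\mu_{\theta_{1}}}\widehat{\mu_{\theta_{2}}}\|_{L^{2}}^{2}.
\]

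Next I would insert the wave packet expansion $\widehat{\mu_{\theta}} = \sum_{T \in \mathcal{T}_{\theta}} a_{T} W_{T}$ from Proposition \ref{WPProp}(W3). Using $|W_{T}|^{2} \lesssim \mathrm{Leb}(T)^{-1} \sim R^{-3/2}$, the near-orthogonality (W2), and the same separation trick used in the proof of Lemma \ref{lemma1} to discard off-diagonal cross terms in the $T$-variables, the right-hand side is dominated by a Kakeya-type bilinear expression of the form
\[
R^{-3} \sum_{T_{1},T_{2}} |a_{T_{1}}|^{2}|a_{T_{2}}|^{2} \mathrm{Leb}(T_{1} \cap T_{2}).
\]
I would estimate this by a dyadic decomposition according to the angle $\alpha \in [R^{-1/2},1]$ between the cap directions of $T_{1}$ and $T_{2}$, so that $\mathrm{Leb}(T_{1} \cap T_{2}) \sim R/\alpha$. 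For fixed $T_{1}$, the tubes $T_{2}$ of angular separation $\sim \alpha$ which meet $T_{1}$ lie inside a rectangle $S$ of dimensions $\sim (\alpha R) \times R^{1+\epsilon}$ around $T_{1}$, and Lemma \ref{lemma1} applied with $\Delta = \alpha R$ yields $\sum_{T_{2} \subset S}|a_{T_{2}}|^{2} \lessapprox \alpha R \cdot R^{1-s}$. Combined with the global bound $\sum_{T_{1}}|a_{T_{1}}|^{2} \lesssim R^{2-s+O(\epsilon)}$ (Lemma \ref{lemma1} with $\Delta = R^{1+\epsilon}$) and summed over the $O(\log R)$ dyadic scales of $\alpha$, this produces $\|\widehat{\mu_{\delta}}\|_{4}^{4} \lessapprox R^{2-2s}$, as required.

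The main obstacle is the third step: one must arrange that the non-concentration supplied by Lemma \ref{lemma1} matches the geometry of tube intersections at every dyadic angular scale $\alpha$. The crucial feature is that Lemma \ref{lemma1} delivers a \emph{$1$-dimensional} non-concentration estimate (exponent $\Delta$, not $\Delta^{s}$), and it is exactly this $1$ that allows the dyadic sum over $\alpha$ to close without losing a power of $R$. As emphasized in the discussion preceding Lemma \ref{lemma1}, this $1$ should be interpreted as the sum $s+(1-s)$, where the $s$-part comes from the Frostman concentration of $\mu$ along $\mathbb{P}$ and the $(1-s)$-part from the complementary non-concentration of $\mu_{\theta}$ in the direction normal to $\mathbb{P}$.
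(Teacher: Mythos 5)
Your argument is, in all essentials, the paper's own proof: the Córdoba--Fefferman $L^{4}$-orthogonality step (the paper cites \cite[Proposition 3.3]{MR3971577} rather than re-deriving the bi-orthogonality from curvature, but the content is identical), the $R^{\epsilon}$-separation trick to kill off-diagonal wave-packet cross-terms, the dyadic angular decomposition with $\mathrm{Leb}(T_{1}\cap T_{2})\sim R/\alpha$, and the crucial insertion of the $1$-dimensional non-concentration bound from Lemma~\ref{lemma1} at the scale $\Delta\sim\alpha R$. One small point: the paper obtains the global bound $\sum_{T}|a_{T}|^{2}\lesssim\delta^{s-2}$ directly via Plancherel and the Frostman estimate $\|\mu_{\delta}\|_{\infty}\lesssim\delta^{s-2}$ (display \eqref{form96}), which is cleaner than invoking Lemma~\ref{lemma1} with $\Delta=R^{1+\epsilon}$ — the lemma only bounds the sum over tubes inside a single fixed rectangle $S$, so to get the full sum one must additionally argue (via rapid decay of $\widehat{\mu_{\delta}}$ outside $B(CR)$) that the tail is negligible; the Plancherel route avoids this.
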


\begin{proof} Fix $R \geq 1$, write $\delta := R^{-1}$. Let $\psi_{\delta}$ be the approximate identity appearing in the wave packet decomposition, Proposition \ref{WPProp}, so $\widehat{\psi_{\delta}} \gtrsim \mathbf{1}_{B(R)}$. Therefore, $\|\hat{\mu}\|_{L^{4}(B(R))}^{4} \lesssim \|\widehat{\mu_{\delta}}\|_{L^{4}(\R^{2})}^{4}$, where $\mu_{\delta} = \mu \ast \psi_{\delta}$. We then expand $\widehat{\mu_{\delta}}$ as in \eqref{form90}:
\begin{displaymath} \widehat{\mu_{\delta}} = \sum_{\theta \in \Theta} \widehat{\mu_{\theta}} = \sum_{\theta \in \Theta} \sum_{T \in \mathcal{T}_{\theta}} a_{T}W_{T}.\end{displaymath}
We record at this point that, by Proposition \ref{WPProp}(W2), we have
\begin{equation}\label{form96} \sum_{\theta \in \Theta} \sum_{T \in \mathcal{T}_{\theta}} |a_{T}|^{2} \sim \sum_{\theta \in \Theta} \|\widehat{\mu_{\theta}}\|_{L^{2}}^{2} \lesssim \|\mu_{\delta}\|_{L^{2}}^{2} \leq \|\mu_{\delta}\|_{L^{\infty}}\|\mu_{\delta}\|_{L^{1}} \lesssim \delta^{s - 2}, \end{equation}
recalling that $\mu_{\delta} = \mu \ast \psi_{\delta}$, where $\|\psi_{\delta}\|_{L^{\infty}} \lesssim \delta^{-2}$.

All the "oscillation" in our problem can be removed by an appeal to the C\'ordoba-Fefferman $L^{4}$-orthogonality lemma, see \cite{MR688026,MR320624} for original references. The form we need is recorded, for example, in \cite[Proposition 3.3]{MR3971577}:
\begin{equation}\label{form99} \|\widehat{\mu_{\delta}}\|_{L^{4}(\R^{2})}^{4} \lesssim \int \Big( \sum_{\theta \in \Theta} |\widehat{\mu_{\theta}}|^{2} \Big)^{2} = \sum_{\theta,\theta'} \int \Big| \sum_{T \in \mathcal{T}_{\theta}} a_{T}W_{T} \Big|^{2} \Big| \sum_{T' \in \mathcal{T}_{\theta'}} a_{T'}W_{T'} \Big|^{2}.  \end{equation}
If the supports of the functions $W_{T}$, $T \in \mathcal{T}_{\theta}$ were disjoint, we could expand the right hand side as
\begin{equation}\label{form97} \sum_{T,T' \in \mathcal{T}} |a_{T}|^{2}|a_{T'}|^{2} \int |W_{T}W_{T'}|^{2}, \end{equation} 
where $\mathcal{T}$ stands for the union of all the families $\mathcal{T}_{\theta}$, $\theta \in \Theta$. This is not quite accurate, and we resort to a trick we already employed in the proof of Lemma \ref{lemma1}, namely splitting the collections $\mathcal{T}_{\theta}$ into $N \sim R^{\epsilon}$ sub-collections $\mathcal{T}_{\theta,j}$ where the tubes $T \in \mathcal{T}_{\theta,j}$ are separated by at least $\gtrsim R^{\epsilon}$. Using the trivial inequality $|c_{1} + \ldots + c_{N}|^{2} \leq N^{2}\max |c_{j}|^{2}$, we first estimate
\begin{equation}\label{form98} \int \Big| \sum_{T \in \mathcal{T}_{\theta}} a_{T}W_{T} \Big|^{2} \Big| \sum_{T' \in \mathcal{T}_{\theta'}} a_{T'}W_{T'} \Big|^{2} \lesssim R^{4\epsilon} \max_{i,j} \int \Big| \sum_{T \in \mathcal{T}_{\theta,i}} a_{T}W_{T} \Big|^{2} \Big| \sum_{T' \in \mathcal{T}_{\theta',j}} a_{T'}W_{T'} \Big|^{2}. \end{equation}
Now, using the rapid decay of the functions $W_{T}$ outside $T$, and the (crude) uniform bound $|a_{T}| \leq \|W_{T}\|_{L^{1}} \lesssim R$, the integrands satisfy the following pointwise bounds:
\begin{displaymath} \Big| \sum_{T \in \mathcal{T}_{\theta,i}} a_{T}W_{T} \Big|^{2} \lesssim_{\epsilon} \sum_{T \in \mathcal{T}_{\theta,i}} |a_{T}|^{2}|W_{T}|^{2} + \delta \quad \text{and} \quad \Big| \sum_{T' \in \mathcal{T}_{\theta',j}} a_{T'}W_{T'} \Big|^{2} \lesssim_{\epsilon} \sum_{T' \in \mathcal{T'}_{\theta',j}} |a_{T'}|^{2}|W_{T'}|^{2} + \delta. \end{displaymath} 
When these bounds are plugged back into \eqref{form98}, and then \eqref{form99}, we finally arrive at the following replacement of \eqref{form97}:
\begin{equation}\label{form95} \|\widehat{\mu_{\delta}}\|_{L^{4}(\R^{2})}^{4} \lesssim \int \Big( \sum_{\theta \in \Theta} |\widehat{\mu_{\theta}}|^{2} \Big)^{2} \lesssim_{\epsilon} R^{4\epsilon} \sum_{T,T' \in \mathcal{T}} |a_{T}|^{2}|a_{T'}|^{2} \int |W_{T}W_{T'}|^{2} + \delta. \end{equation}
To estimate the right hand side, we imitate the proof of the $L^{2}$-Kakeya maximal function bound, with the only non-trivial addition of inserting Lemma \ref{lemma1} at a suitable point. We fix $T \in \mathcal{T}_{\theta_{0}} \subset \mathcal{T}$, and we decompose
\begin{equation}\label{form29} \sum_{T' \in \mathcal{T}} |a_{T'}|^{2} \int |W_{T}W_{T'}|^{2} = \sum_{\delta^{1/2} \lesssim \alpha \lesssim 1} \mathop{\sum_{T' \in \mathcal{T}}}_{\angle(T,T') \sim \alpha} |a_{T'}|^{2} \int |W_{T}W_{T'}|^{2}. \end{equation}
To be precise, the summation over $\delta^{1/2} \leq \alpha \leq 1$ runs over dyadic rationals in the indicated range, and the summation $\{T' \in \mathcal{T} : \angle(T,T') \sim \alpha\}$ runs over the tubes in those families $\mathcal{T}_{\theta}$ with $\alpha \leq |\theta - \theta_{0}| \leq 2\alpha$. Fix $\delta^{1/2} \lesssim \alpha \lesssim 1$. Fix also $T' \in \mathcal{T}$ with $\angle (T,T') \sim \alpha$, and let $\mathbf{T},\mathbf{T}'$ be tubes which are concentric with $T,T'$, but fattened by a factor $R^{\epsilon}$ in both directions. If $\mathbf{T} \cap \mathbf{T'} = \emptyset$, then 
\begin{displaymath} \int |W_{T}W_{T'}|^{2} \lesssim_{\epsilon} \delta^{10} \end{displaymath}
by the rapid decay of $W_{T},W_{T'}$. Therefore, the part of the sum \eqref{form29} over such $T' \in \mathcal{T}$ is bounded from above by $\lessapprox \delta^{10} \sum_{T' \in \mathcal{T}} |a_{T'}|^{2} \lesssim \delta^{8 + s} \leq \delta$, applying also \eqref{form96}. 

Assume then that $\mathbf{T} \cap \mathbf{T'} \neq \emptyset$. Recall that $\mathbf{T},\mathbf{T}'$ are rectangles of dimensions $R^{1/2 + \epsilon} \times R^{1 + \epsilon}$. Therefore $\mathrm{Leb}(\mathbf{T} \cap \mathbf{T}') \lesssim R^{1 + 2\epsilon}/\alpha$. Using this, and Proposition \ref{WPProp}(W1), we first deduce that
\begin{equation}\label{form28} \int |W_{T}W_{T'}|^{2} \lesssim_{\epsilon} \int_{\mathbf{T} \cap \mathbf{T'}} |W_{T}W_{T'}|^{2} + \delta \lesssim \mathrm{Leb}(T)^{-2} \cdot \frac{R^{1 + 2\epsilon}}{\alpha} = \frac{R^{2\epsilon - 2}}{\alpha}. \end{equation}
Moreover, since $\angle(T,T') = \angle(\mathbf{T},\mathbf{T}') \sim \alpha$, the non-empty intersection of $\mathbf{T},\mathbf{T}'$ implies that $T' \subset \mathbf{T}' \subset \mathbf{T}(\Delta) =: S$, where $\Delta \sim \alpha \cdot R^{1 + \epsilon} \geq R^{1/2}$. As usual, the notation stands $\mathbf{T}(\Delta)$ stands for the $\Delta$-neighbourhood of $\mathbf{T}$, which is a rectangle of dimensions roughly $\Delta \times R^{1 + \epsilon}$, noting that $\Delta \lesssim R^{1 + \epsilon}$. Consequently, applying Lemma \ref{lemma1}, we have
\begin{align*} \mathop{\sum_{T' \in \mathcal{T}}}_{\angle(T,T') \sim \alpha, \mathbf{T} \cap \mathbf{T}' \neq \emptyset} |a_{T'}|^{2} \int |W_{T}W_{T'}|^{2} & \stackrel{\eqref{form28}}{\lesssim_{\epsilon}} \frac{R^{2\epsilon - 2}}{\alpha} \cdot \mathop{\sum_{T \in \mathcal{T}}}_{T' \subset S} |a_{T'}|^{2}\\
& \stackrel{\textup{L. } \ref{lemma1}}{\lesssim_{\epsilon}}  \frac{R^{2\epsilon - 2}}{\alpha} \cdot \left( \Delta \cdot \delta^{s - 1 - 4\epsilon} \right) \sim R^{7\epsilon - s}.  \end{align*} 
Plugging this back into \eqref{form95}-\eqref{form29}, we see that
\begin{displaymath} \|\hat{\mu}\|_{L^{4}(B(R))}^{4} \lesssim_{\epsilon} R^{4\epsilon} \sum_{T \in \mathcal{T}} |a_{T}|^{2} \sum_{\delta^{1/2} \lesssim \alpha \lesssim 1} R^{7\epsilon - s} \lessapprox R^{11\epsilon - s} \sum_{T \in \mathcal{T}} |a_{T}|^{2} \stackrel{\eqref{form96}}{\lesssim} R^{2 - 2s + 11\epsilon}. \end{displaymath}
This completes the proof of the proposition. \end{proof}

\section{Second part of Theorem \ref{main}}\label{s:epsilon}

The purpose of this section is to prove the second part of Theorem \ref{main}, concerning exponents $p > 4$. In fact, since we already know the $p = 4$ endpoint from Theorem \ref{L4thm}, it suffices to establish the $\epsilon$-improvement in Theorem \ref{main} for $p = 6$. Namely, if this is already known, and $4 < p \leq 6$, then $p = 4\theta + 6(1 - \theta)$ for some $\theta < 1$, and hence
\begin{displaymath} \|\hat{\mu}\|_{L^{p}(B(R))}^{p} \leq \|\hat{\mu}\|_{L^{4}(B(R))}^{4\theta} \|\hat{\mu}\|_{L^{6}(B(R))}^{6(1 - \theta)} \stackrel{\textup{T. } \ref{L4thm}}{\lessapprox} R^{(2 - 2s)\theta} \cdot R^{(2 - 2s - \epsilon)(1 - \theta)} = R^{2 - 2s - \epsilon(1 - \theta)}. \end{displaymath} 
The cases $p > 6$ follow from the trivial estimate $\|\hat{\mu}\|_{L^{p}(B(R))}^{p} \lesssim_{p} \|\hat{\mu}\|_{L^{6}(B(R))}^{6}$, using only that $\|\hat{\mu}\|_{L^{\infty}} \leq \mu(\mathbb{P}) \leq 2$ for every measure as in the hypothesis of Theorem \ref{main}. We then restate the case $p = 6$ of Theorem \ref{main}:

\begin{thm}\label{thm2} For every $s \in (0,1)$, there exist $C = C(s) > 0$ and $\epsilon = \epsilon(s) > 0$ such that the following holds. Let $\mu$ be a Borel measure on $\mathbb{P}$ satisfying $\mu(B(x,r)) \leq r^{s}$ for all $x \in \mathbb{P}$ and $r > 0$. Then, 
\begin{equation}\label{form38} \|\hat{\mu}\|_{L^{6}(B(R))}^{6} \leq CR^{2 - 2s - \epsilon}, \qquad R \geq 1. \end{equation}
\end{thm}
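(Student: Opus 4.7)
The plan is to argue by contradiction via a $\delta$-discretised reformulation in terms of additive energy. Suppose that \eqref{form38} fails for every $\epsilon > 0$. Writing $\delta = R^{-1}$ and using Plancherel,
\begin{displaymath} \|\widehat{\mu_{\delta}}\|_{L^{6}}^{6} = \|\mu_{\delta} \ast \mu_{\delta} \ast \mu_{\delta}\|_{L^{2}}^{2}, \end{displaymath}
a standard discretisation (replacing $\mu$ by a normalised sum of bump functions centred on a $(\delta, s, \delta^{-O(\epsilon)})$-set $P \subset \mathbb{P}$ with $|P| \approx \delta^{-s}$) converts the counter-assumption into a near-maximal lower bound for the third additive energy
\begin{displaymath} E_{3}(P, \delta) := |\{(p_{1}, p_{2}, p_{3}, q_{1}, q_{2}, q_{3}) \in P^{6} : |p_{1} + p_{2} + p_{3} - q_{1} - q_{2} - q_{3}| \leq \delta\}| \gtrapprox \delta^{-4s}. \end{displaymath}
Combined with the Cauchy--Schwarz-type inequality $E_{3}(P, \delta) \cdot |P + P + P|_{\delta} \geq |P|^{6} \approx \delta^{-6s}$, this forces the near-minimal tripling estimate $|P + P + P|_{\delta} \lessapprox \delta^{-2s}$.

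Next I would exploit the Bombieri--Bourgain--Demeter observation relating additive triples on $\mathbb{P}$ to point-circle incidences. Writing $p_{j} = (x_{j}, x_{j}^{2})$ and $(a, b) := p_{1} + p_{2} + p_{3}$, one verifies algebraically that
\begin{displaymath} A(x_{1}, x_{2}) := (3(x_{1} + x_{2}), \sqrt{3}(x_{1} - x_{2})) \in S_{(a, b)} := S\bigl((2a, 0), \sqrt{6b - 2a^{2}}\bigr). \end{displaymath}
An iterated pigeonholing on $E_{3}(P, \delta) \approx \delta^{-4s}$ and $|P + P + P|_{\delta} \approx \delta^{-2s}$ then extracts a refined family $\Sigma \subset P + P + P$ of cardinality $|\Sigma|_{\delta} \gtrapprox \delta^{-2s}$ such that each $\sigma \in \Sigma$ admits $\gtrapprox \delta^{-s}$ distinct representations $\sigma \approx p_{1} + p_{2} + p_{3}$ in $P$. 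Because the parameterisation $\sigma \mapsto S_{\sigma}$ is bilipschitz on the relevant compact region, the circles $\{S_{\sigma}\}_{\sigma \in \Sigma}$ form a $(\delta, 2s, \delta^{-O(\epsilon)})$-set of $\gtrapprox \delta^{-2s}$ circles centred on the $x$-axis, and each $S_{\sigma}(O(\delta))$ contains a $(\delta, s, \delta^{-O(\epsilon)})$-set of $\gtrapprox \delta^{-s}$ points drawn from the common set $X := A(\pi(P) \times \pi(P)) \subset \R^{2}$, which satisfies $|X|_{\delta} \lessapprox |\pi(P)|^{2} \approx \delta^{-2s}$. This produces an $(s, 2s)$-Furstenberg configuration \emph{of circles} with the minimal possible number of total points.

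Finally, I would convert this point-circle configuration to a point-line one. The key geometric input, credited to J.~Zahl in the introduction, is that circles centred on the real axis arise in the Poincaré half-plane model of hyperbolic geometry, and the change of model from the Poincaré disk to the Beltrami--Klein disk (composed with the Cayley transform) carries such circles to straight chords of the unit disk. This composite map is bilipschitz on compact subregions of the upper half-plane, so it transports our configuration to a $\delta$-discretised $(s, 2s, \delta^{-O(\epsilon)})$-Furstenberg set $X' \subset \R^{2}$ satisfying $|X'|_{\delta} \lessapprox \delta^{-2s}$. This contradicts Theorem \ref{DBZ}, which guarantees $|X'|_{\delta} \gtrsim \delta^{-2s - c(s)}$ for some $c(s) > 0$, provided $\epsilon$ is chosen smaller than $c(s)$.

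The main obstacle, in my view, is twofold. First, extracting a \emph{uniform} Furstenberg configuration from the mere $L^{6}$ information requires delicate multi-parameter pigeonholing: one must simultaneously control the number of representations per $\sigma \in \Sigma$, the cardinality and $(\delta, 2s)$-regularity of $\Sigma$, the $(\delta, s)$-regularity of the point set on each circle, and the ambient set $X$, all while keeping the Furstenberg constants polynomial in $\delta^{-\epsilon}$. Second, after transferring to lines via hyperbolic geometry, one must verify that the bilipschitz constants remain controlled: this requires confining attention to a compact subregion of parameter space on which the Möbius and Beltrami--Klein maps are uniformly non-degenerate. These localisations are natural since $P$ lies on the compact arc $\mathbb{P}$, but checking them carefully -- and propagating the $(\delta, s)$-structure of $\pi(P)$ faithfully through $A$ and the subsequent transformations -- is where most of the technical labour lies.
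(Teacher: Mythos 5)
Your overall strategy is the same as the paper's: discretise, reformulate via $E_{3}$ and the Bombieri--Bourgain--Demeter point-circle dictionary, pigeonhole to a family of circles centred on the real axis each carrying an $s$-dimensional subset of a fixed $2s$-dimensional set, then transport to lines via the Cayley transform and the Poincar\'e-to-Beltrami--Klein change of model, and finally invoke Theorem \ref{DBZ}. The key objects ($P$, the map $A$, the circles $S_{\sigma}$, the ambient set $X=A(\pi(P)\times\pi(P))$, the composite $\mathcal{G}$) are all identified correctly.

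However, there is a genuine gap in the pigeonholing setup. You claim that from the Cauchy--Schwarz inequality $E_{3}(P,\delta)\cdot|P+P+P|_{\delta}\geq|P|^{6}$ and $E_{3}\gtrapprox\delta^{-4s}$ one can deduce the tripling estimate $|P+P+P|_{\delta}\lessapprox\delta^{-2s}$. This is backwards: that inequality only yields a \emph{lower} bound $|P+P+P|_{\delta}\gtrapprox\delta^{-2s}$. No upper bound on $|P+P+P|_{\delta}$ follows; nothing prevents $|P+P+P|_{\delta}$ from being as large as $\approx\delta^{-3s}$ with the energy $\delta^{-4s}$ spread over many sums each having $m(\sigma)\approx\delta^{-s/2}$ representations, in which case your pigeonholing would not produce any $\sigma$ with $\gtrapprox\delta^{-s}$ representations. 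What the argument actually needs, and what the paper supplies, is an $L^{\infty}$ bound on the representation function rather than an $L^{0}$ bound on the sumset: after a preliminary reduction to measures that are essentially constant at scale $\delta$ (see \eqref{form59}) and a Whitney/bilinear decomposition of the $L^{6}$ integral, the two innermost summands are forced to come from $\delta^{O(\epsilon)}$-separated arcs $I,J\subset\mathbb{P}$, and the transversality Lemma \ref{lemma4} then bounds the number of representations with $(x_{1},x_{2})\in P_{I}\times P_{J}$ by $\lessapprox 1$ per fixed $(x_{3},y_{1},y_{2},y_{3})$, hence $m(\sigma)\lessapprox\delta^{-s}$. With that uniform upper bound in hand, the pigeonhole from $E_{3}\gtrapprox\delta^{-4s}$ to a family $\mathcal{S}\subset P\times P$ of size $\gtrapprox\delta^{-2s}$ with $\gtrapprox\delta^{-s}$ representations each goes through. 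You flagged "delicate multi-parameter pigeonholing" as the main obstacle, which is the right instinct, but the specific mechanism you propose for it (the tripling estimate) does not hold.
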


\subsection{Auxiliary results} The $L^{6}(B(R))$-norm of $\hat{\mu}$ equals the $L^{2}$-norm of the convolution $\mu \ast \mu \ast \mu$, roughly speaking mollified at scale $R^{-1}$. This quantity, on the other hand, counts $R^{-1}$-approximate solutions to the equation $x_{1} + x_{2} + x_{3} = x_{4} + x_{5} + x_{6}$, with $(x_{1},\ldots,x_{6}) \in \spt \mu =: P$ (see Lemma \ref{lemma3}). It is well-known that if $P \subset \mathbb{P}$ is a finite set, then the problem of counting such (exact) solutions is connected to an incidence-counting problem for circles in the plane. The connection was discovered by Bourgain and Bombieri \cite{MR3373053} (for $P \subset S^{1}$) and then Bourgain and Demeter \cite{MR3374964} (for $P \subset \mathbb{P}$). The connection is captured by the following lemma. The case $\delta = 0$ is sketched in \cite[Proposition 2.15]{MR3374964}, but we give the details (we anyway need the details to prove the "approximate" version):

\begin{lemma}\label{demeterLemma} Let $\xi_{1},\xi_{2},\xi_{3} \in \R$, and write
\begin{equation}\label{form30} (\xi_{1},\xi_{1}^{2}) + (\xi_{2},\xi_{2}^{2}) + (\xi_{3},\xi_{3}^{2}) =: (a,b). \end{equation}
Then, the point
\begin{displaymath} A(\xi_{1},\xi_{2}) := (3(\xi_{1} + \xi_{2}),\sqrt{3}(\xi_{1} - \xi_{2})) \end{displaymath}
is contained on the circle $S_{a,b} := \partial B((2a,0),\sqrt{6b - 2a^{2}})$. 

The following approximate version also holds. Assume that $\delta \in (0,1]$, $\xi_{1},\xi_{2},\xi_{3} \in [-1,1]$, and \eqref{form30} is replaced by
\begin{equation}\label{form70} |(\xi_{1},\xi_{1}^{2}) + (\xi_{2},\xi_{2}^{2}) + (\xi_{3},\xi_{3}^{2}) - (a,b)| \leq \delta. \end{equation}
If $r := 6b - 2a^{2} \leq \delta$, then $A(\xi_{1},\xi_{2}) \in B((2a,0),C\sqrt{\delta})$ for an absolute constant $C > 0$. Otherwise, $A(\xi_{1},\xi_{2})$ is contained in the annulus $S_{a,b}(C\delta/\sqrt{r})$.
\end{lemma}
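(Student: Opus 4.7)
\medskip

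\noindent\textbf{Proof proposal.} My plan is to handle the exact case by a direct algebraic computation and then deduce the approximate statement by perturbation.

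For the exact case, set $a' := \xi_1 + \xi_2 + \xi_3$ and $b' := \xi_1^2 + \xi_2^2 + \xi_3^2$ (so $(a,b) = (a',b')$ here). I will compute
\begin{displaymath}
|A(\xi_1,\xi_2) - (2a',0)|^2 = (3(\xi_1+\xi_2) - 2a')^2 + 3(\xi_1 - \xi_2)^2
\end{displaymath}
and simplify by eliminating $\xi_1,\xi_2$ in favour of $\xi_3$: first, $3(\xi_1+\xi_2) - 2a' = 3(a' - \xi_3) - 2a' = a' - 3\xi_3$; second, squaring $\xi_1 + \xi_2 = a' - \xi_3$ and comparing with $\xi_1^2 + \xi_2^2 = b' - \xi_3^2$ yields $2\xi_1\xi_2 = (a')^2 - 2a'\xi_3 + 2\xi_3^2 - b'$, hence $(\xi_1 - \xi_2)^2 = -(a')^2 + 2a'\xi_3 - 3\xi_3^2 + 2b'$. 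Adding and simplifying, the $\xi_3$-terms cancel and one is left with $6b' - 2(a')^2$, proving the exact case. (Note this quantity is automatically nonnegative, since Cauchy--Schwarz gives $3b' \geq (a')^2$.)

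For the approximate case, assume \eqref{form70}, so $|a-a'| \leq \delta$ and $|b-b'| \leq \delta$. Applying the exact identity to $(a',b')$ and using the triangle inequality for the shift of the centre gives
\begin{displaymath}
\bigl| |A(\xi_1,\xi_2) - (2a,0)| - \sqrt{6b' - 2(a')^2}\, \bigr| \leq 2\delta.
\end{displaymath}
Since $\xi_1,\xi_2,\xi_3 \in [-1,1]$, the quantities $a,a',b,b'$ are all bounded, so
\begin{displaymath}
\bigl| (6b' - 2(a')^2) - (6b - 2a^2) \bigr| \leq 6|b-b'| + 2|a-a'|\,|a+a'| \leq C_0 \delta
\end{displaymath}
for an absolute $C_0$. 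Write $u := 6b' - 2(a')^2 \geq 0$ and $v := r = 6b - 2a^2$. If $r \leq \delta$, then $u \leq (C_0 + 1)\delta$, so $\sqrt{u} \lesssim \sqrt{\delta}$, and combining with the previous display gives $|A(\xi_1,\xi_2) - (2a,0)| \leq C\sqrt{\delta}$, which is the first alternative. If $r > \delta$, apply the elementary inequality
\begin{displaymath}
|\sqrt{u} - \sqrt{v}| = \frac{|u - v|}{\sqrt{u} + \sqrt{v}} \leq \frac{C_0 \delta}{\sqrt{r}},
\end{displaymath}
and combine with $|\,|A(\xi_1,\xi_2) - (2a,0)| - \sqrt{u}\,| \leq 2\delta \lesssim \delta/\sqrt{r}$ (using that $r \lesssim 1$). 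This yields $|\,|A(\xi_1,\xi_2) - (2a,0)| - \sqrt{r}\,| \leq C\delta/\sqrt{r}$, i.e.\ $A(\xi_1,\xi_2) \in S_{a,b}(C\delta/\sqrt{r})$.

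The only real subtlety is the case split on whether $r \leq \delta$: when the radius $\sqrt{r}$ of the target circle is already of size $\sqrt{\delta}$, the annular thickness $\delta/\sqrt{r}$ degenerates and one must instead bound the distance to the centre. The computation itself is routine; the main point is simply to notice that the quadratic identity from the exact case is robust under an $O(\delta)$ perturbation of $(a,b)$, with the loss of $\sqrt{\cdot}$ producing the anisotropic thickness $\delta/\sqrt{r}$.
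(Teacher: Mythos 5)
Your proof is correct and follows essentially the same route as the paper: the same algebraic identity in the exact case (eliminating $\xi_1,\xi_2$ in favour of $\xi_3$ to reach $6b-2a^2$), the same case split on $r\le\delta$ versus $r>\delta$, and the same use of $|\sqrt{u}-\sqrt{v}|\le|u-v|/\sqrt{v}$. The one organizational difference is that you apply the exact identity at the exact sums $(a',b')$ and then perturb the centre and radius separately, whereas the paper tracks $O(\delta)$ errors through the original computation; both are equivalent, and yours is arguably a slightly cleaner packaging of the same estimate.
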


\begin{remark}\label{rem1} The proof below also shows that $6b - 2a^{2} \geq 0$, whenever $(a,b)$ arises as in \eqref{form30}. If the conclusion of Lemma \ref{demeterLemma} seems unintuitive at first, the following remark might be helpful: by \eqref{form30}, the point $(\xi_{1},\xi_{2},\xi_{3}) \in \R^{3}$ lies on the circle in $\R^{3}$ obtained by  intersecting the sphere $\xi_{1}^{2} + \xi_{2}^{2} + \xi_{3}^{2} = b$ with the plane $\xi_{1} + \xi_{2} + \xi_{3} = a$.  The fact that $A(\xi_{1},\xi_{2})$ lies on the planar circle $S_{a,b}$ could be "derived" from this observation with some effort, but in the following proof it is simpler to just "check" the conclusion.   \end{remark}

\begin{proof}[Proof of Lemma \ref{demeterLemma}] We first record that the equation \eqref{form30} yields
\begin{equation}\label{form69} (a - \xi_{3})^{2} \stackrel{\eqref{form30}}{=} (\xi_{1} + \xi_{2})^{2} = \xi_{1}^{2} + \xi_{2}^{2} + 2\xi_{1}\xi_{2} \stackrel{\eqref{form30}}{=} b - \xi_{3}^{2} + 2\xi_{1}\xi_{2},  \end{equation}
or in other words
\begin{equation}\label{form31} 2\xi_{1}\xi_{2} = a^{2} - 2a\xi_{3} + 2\xi_{3}^{2} - b. \end{equation}
After this observation, the rest of the argument is rather straightforward. To check that $A(\xi_{1},\xi_{2}) \in S((2a,0),\sqrt{6b - 2a^{2}})$, we simply calculate the distance $|A(\xi_{1},\xi_{2}) - (2a,0)|^{2} = $
\begin{align*} (3(\xi_{1} + \xi_{2}) - 2a)^{2} + 3(\xi_{1} - \xi_{2})^{2} & \stackrel{\eqref{form30}}{=} (a - 3\xi_{3})^{2} + 3(\xi_{1}^{2} + \xi_{2}^{2} - 2\xi_{1}\xi_{2})\\
& \stackrel{\eqref{form30}}{=} (a^{2} - 6a\xi_{3} + 9\xi_{3}^{2}) + 3(b - \xi_{3}^{2} - 2\xi_{1}\xi_{2})\\
& \stackrel{\eqref{form31}}{=} (a^{2} - 6a\xi_{3} + 9\xi_{3}^{2}) + 3(b - \xi_{3} - a^{2} + 2a\xi_{3} - 2\xi_{3}^{2} + b)\\
& = 6b - 2a^{2}.  \end{align*}
This is what we claimed in the first part of the statement. 

The second part follows by inspecting the calculation above. Using $\max\{|\xi_{1}|,|\xi_{2}|,|\xi_{3}|\} \leq 1$, the calculation \eqref{form69} combined with \eqref{form70} shows that
\begin{displaymath} 2\xi_{1}\xi_{2} = a^{2} - 2a\xi_{3} + 2\xi_{3}^{2} - b + O(\delta). \end{displaymath} 
This leads to 
\begin{displaymath} (3(\xi_{1} + \xi_{2}) - 2a)^{2} + 3(\xi_{1} - \xi_{2})^{2} = 6b - 2a^{2} + O(\delta). \end{displaymath}
In case $r = 6b - 2a^{2} \leq \delta$, we may conclude that $(3(\xi_{1} + \xi_{2}),\sqrt{3}(\xi_{1} - \xi_{2})) \in B((2a,0),C\sqrt{\delta})$. In the opposite case we use $|\sqrt{r} - \sqrt{s}| \leq |r - s|/\sqrt{r}$ to estimate
\begin{displaymath} |\sqrt{(3(\xi_{1} + \xi_{2}) - 2a)^{2} + 3(\xi_{1} - \xi_{2})^{2}} - \sqrt{6b - 2a^{2}}| \lesssim \delta/\sqrt{r}, \end{displaymath} 
so $A(\xi_{1},\xi_{2}) \in S_{a,b}(C\delta/\sqrt{r})$ as claimed. (The latter estimates are also valid if $0 < r < \delta$, but in this case the bound $\delta/\sqrt{r}$ is not very useful.) \end{proof}

We next formalise the connection of Fourier transforms and additive energies:

\begin{lemma}\label{lemma3} Let $P_{1},\ldots,P_{6} \subset \R^{d}$ be $\delta$-separated sets, and let $\mu_{1},\ldots,\mu_{6} \in C^{\infty}_{c}(\R^{d})$ be functions satisfying $0 \leq \mu \leq \mathbf{1}_{P_{j}(\delta)}$. Then,
\begin{displaymath} \int \widehat{\mu_{1}}\widehat{\mu_{2}}\widehat{\mu_{3}}\overline{\widehat{\mu_{4}}\widehat{\mu_{5}}\widehat{\mu_{6}}} \lesssim \delta^{5d}|\{(x_{1},\ldots,x_{6}) \in P_{1} \times \cdots \times P_{6} : |(x_{1} + x_{2} + x_{3}) - (x_{4} + x_{5} + x_{6})| \leq 6\delta\}|. \end{displaymath} 
%\begin{displaymath} \int \widehat{\mu_{1}}\widehat{\mu_{2}}\overline{\widehat{\mu_{3}}\widehat{\mu_{4}}} \lesssim  \delta^{3\delta}|\{(x_{1},\ldots,x_{4}) \in P_{1} \times \cdots \times P_{4} : |(x_{1} + x_{2}) - (x_{3} + x_{4})| \leq 6\delta\}|. \end{displaymath}
\end{lemma}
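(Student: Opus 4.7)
The plan is to pass from the Fourier side to the physical side via Plancherel/Parseval, and then bound the resulting integral of a product of two triple convolutions by a combinatorial count. Since each $\mu_{j}$ is real-valued (being a smooth function with values in $[0,1]$), we have $\overline{\widehat{\mu_{j}}} = \widehat{\tilde{\mu}_{j}}$ where $\tilde{\mu}_{j}(x) = \mu_{j}(-x)$. Using $\widehat{f \ast g} = \widehat{f}\widehat{g}$ and Parseval's identity, the left-hand side equals, up to an absolute constant,
\begin{displaymath} \int (\mu_{1} \ast \mu_{2} \ast \mu_{3})(z)(\mu_{4} \ast \mu_{5} \ast \mu_{6})(z) \, dz. \end{displaymath}
By the standard identity $\int F(z)G(z) \, dz = (F \ast \tilde{G})(0)$, this in turn equals a single $5d$-fold integral over $(y_{1},\ldots,y_{5})$ of the form
\begin{displaymath} \int \cdots \int \mu_{1}(y_{1})\mu_{2}(y_{2})\mu_{3}(y_{3})\mu_{4}(y_{4})\mu_{5}(y_{5})\mu_{6}(y_{1} + y_{2} + y_{3} - y_{4} - y_{5}) \, dy_{1} \cdots dy_{5}, \end{displaymath}
where $y_{6} := y_{1} + y_{2} + y_{3} - y_{4} - y_{5}$ is the "missing" sixth variable forced by the convolution structure.

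Next I would dominate this integral combinatorially by decomposing the support $P_{j}(\delta) = \bigcup_{p \in P_{j}} B(p,\delta)$. For any fixed tuple $(p_{1},\ldots,p_{6}) \in P_{1} \times \cdots \times P_{6}$, the contribution from integrating over the box $B(p_{1},\delta) \times \cdots \times B(p_{5},\delta)$ (after incorporating the constraint that $y_{6} \in B(p_{6},\delta)$) vanishes unless
\begin{displaymath} |(p_{1} + p_{2} + p_{3}) - (p_{4} + p_{5} + p_{6})| \leq 6\delta, \end{displaymath}
because $y_{j} \in B(p_{j},\delta)$ forces $y_{1} + y_{2} + y_{3} - y_{4} - y_{5} \in B(p_{1} + p_{2} + p_{3} - p_{4} - p_{5}, 5\delta)$, which must intersect $B(p_{6},\delta)$. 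On the other hand, whenever this compatibility condition holds, the integrand is pointwise $\leq 1$ and the integration is over a set of Lebesgue measure at most $\delta^{5d}$, so the contribution is trivially $\leq \delta^{5d}$.

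Summing over all tuples $(p_{1},\ldots,p_{6}) \in P_{1} \times \cdots \times P_{6}$ (and using that the balls $B(p_{j},\delta)$ are essentially disjoint, since $P_{j}$ is $\delta$-separated --- if one is worried about overlaps at the boundaries, a harmless factor is absorbed in the "$\lesssim$") yields the stated bound. I do not foresee any real obstacle; the only care needed is keeping track of the factor $5d$ in the volume estimate (five "free" variables, with the sixth constrained), and of the constant $6\delta$ coming from the triangle inequality applied to $y_{1} + \cdots + y_{5}$ vs.\ $p_{1} + \cdots + p_{5}$ together with $y_{6}$ vs.\ $p_{6}$.
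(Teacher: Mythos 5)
Your proposal is correct and takes essentially the same approach as the paper: pass to the physical side via Plancherel, reducing to $\int (\mu_1 \ast \mu_2 \ast \mu_3)(\mu_4 \ast \mu_5 \ast \mu_6)$, and then discretize. The only cosmetic difference is that the paper first bounds each triple convolution pointwise by a counting function $m_{3\delta}(z),n_{3\delta}(z)$ and then integrates the product, whereas you expand directly into a single $5d$-fold integral and partition the domain into $\delta$-boxes indexed by tuples from $P_1\times\cdots\times P_6$; both yield the same count with the same constants.
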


\begin{proof} By Plancherel,
\begin{displaymath} \int \widehat{\mu_{1}}\widehat{\mu_{2}}\widehat{\mu_{3}}\overline{\widehat{\mu_{4}}\widehat{\mu_{5}}\widehat{\mu_{6}}} = \int (\mu_{1} \ast \mu_{2} \ast \mu_{3})(\mu_{4} \ast \mu_{5} \ast \mu_{6}). \end{displaymath} 
For $r > 0$, write
\begin{displaymath} m(z) := |\{(x_{1},x_{2},x_{3}) \in P_{1} \times P_{2} \times P_{3} : |(x_{1} + x_{2} + x_{3}) - z| \leq r\}| \end{displaymath}
and 
\begin{displaymath} n(z) := |\{(x_{4},x_{5},x_{6}) \in P_{4} \times P_{5} \times P_{6} : |(x_{4} + x_{5} + x_{6}) - z| \leq r\}|. \end{displaymath}
Then,
\begin{align*} (\mu_{1} \ast \mu_{2} \ast \mu_{3})(z) & = \iint \mu_{1}(z - x_{2} - x_{3})\mu_{2}(x_{2})\mu_{3}(x_{3}) \, dx_{2} \, dx_{3}\\
& \lesssim \delta^{2d} \sum_{(x_{2},x_{3}) \in P_{2} \times P_{3}} \mathbf{1}_{P_{1}(3\delta)}(z - x_{2} - x_{3})\\
& \leq \delta^{2d} \sum_{x_{1} \in P_{1}} |\{(x_{2},x_{3}) \in P_{2} \times P_{3} : |z - (x_{1} + x_{2} + x_{3})| \leq 3\delta\}| = \delta^{2d}m_{3\delta}(z),  \end{align*} 
and similarly $(\mu_{4} \ast \mu_{5} \ast \mu_{6})(z) \lesssim \delta^{2d}n_{3\delta}(z)$. Therefore,
\begin{align*} \int & (\mu_{1} \ast \mu_{2} \ast \mu_{3})(\mu_{4} \ast \mu_{5} \ast \mu_{6}) \lesssim \delta^{4d} \int m_{3\delta}(z)n_{3\delta}(z) \, dz\\
& = \delta^{4d} \sum_{x_{1},\ldots,x_{6}} \mathrm{Leb}(\{z \in \R^{d} : |z - (x_{1} + x_{2} + x_{3})| \leq 3\delta \text{ and } |z - (x_{4} + x_{5} + x_{6})| \leq 3\delta\}). \end{align*}
The sum runs over $(x_{1},\ldots,x_{6}) \in P_{1} \times \cdots \times P_{6}$, and it can evidently be restricted to those $6$-tuples with $|(x_{1} + x_{2} + x_{3}) - (x_{4} + x_{5} + x_{6})| \leq 6\delta$. For such $6$-tuples, on the other hand, the possible $z \in \R^{d}$ lie in a ball of radius $\sim \delta$, and their Lebesgue measure contributes the $5^{th}$ factor of "$\delta^{d}$". This completes the proof of the lemma. \end{proof}

Finally, we record the following consequence of transversality:
\begin{lemma}\label{lemma4} Let $0 < \delta \leq \tau \leq 1$. Let $I,J \subset \mathbb{P}$ be arcs with $\dist(I,J) \geq \tau$, and let $P_{I} \subset I$ and $P_{J} \subset J$ be $\delta$-separated sets. Then, for all $x_{0},y_{0} \in \R^{2}$, and $C > 0$, we have
\begin{displaymath} |\{(x,y) \in P_{I} \times P_{J} : |(x + x_{0}) \pm (y + y_{0})| \leq C\delta\}| \lesssim C^{2}/\tau. \end{displaymath} 
\end{lemma}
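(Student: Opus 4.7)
The plan is a standard Jacobian-transversality argument, carried out after parameterizing the parabola by $t \in [-1, 1]$. First I would set $T_I := \pi(P_I)$ and $T_J := \pi(P_J)$, where $\pi(t, t^2) = t$; since $t \mapsto (t, t^2)$ is bilipschitz on $[-1, 1]$, the sets $T_I$ and $T_J$ are $c\delta$-separated subsets of $[-1, 1]$ for an absolute constant $c > 0$, and the hypothesis $\dist(I, J) \geq \tau$ forces $|t_1 - t_2| \gtrsim \tau$ for every $(t_1, t_2) \in T_I \times T_J$.

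Introduce the maps $\Phi_{\pm} : \R^2 \to \R^2$ defined by $\Phi_{\pm}(t_1, t_2) := (t_1, t_1^2) \pm (t_2, t_2^2)$. The condition $|(x + x_0) \pm (y + y_0)| \leq C\delta$ is then equivalent to $|\Phi_{\pm}(t_1, t_2) - z_0| \leq C\delta$ for the fixed vector $z_0 := -x_0 \mp y_0 \in \R^2$. A direct computation of the Jacobian yields
\[
|\det D\Phi_{\pm}(t_1, t_2)| = 2|t_1 - t_2| \gtrsim \tau
\]
uniformly on $I \times J$. Moreover, $\Phi_{\pm}$ is injective on $I \times J$: the equations $\Phi_{\pm}(t_1, t_2) = \Phi_{\pm}(t_1', t_2')$ reduce, after a brief algebraic manipulation, either to $(t_1, t_2) = (t_1', t_2')$ or to $(t_1, t_2) = (t_2', t_1')$, and the second alternative is excluded by $I \cap J = \emptyset$.

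Applying the change-of-variables formula to this injective map with non-vanishing Jacobian gives
\[
\mathrm{Leb}\bigl(\Phi_{\pm}^{-1}(B(z_0, C\delta)) \cap (I \times J)\bigr) \lesssim (C\delta)^2/\tau.
\]
To convert this area bound into a counting bound, I would place pairwise disjoint squares of sidelength $\sim \delta$ around each pair of $T_I \times T_J$ that contributes to the count; after a harmless $O(\delta)$-thickening of the preimage to accommodate these squares, the total area stays $\lesssim C^2\delta^2/\tau$ (the regime $C^2 \gtrsim \tau$ being the only nontrivial one), and dividing by $\delta^2$ produces the desired bound $\lesssim C^2/\tau$.

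No step presents a serious obstacle. The only points requiring any thought are the injectivity of $\Phi_\pm$ on $I \times J$ (essentially immediate from $I \cap J = \emptyset$) and the passage from the Lebesgue-area estimate to the counting estimate, which is routine once the $c\delta$-separation of $T_I \times T_J$ in $\R^2$ is recorded.
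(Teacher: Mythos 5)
Your proof is correct, but it takes a genuinely different route from the paper's. The paper argues in one dimension: it parametrizes the thickened arcs $(x_{0}+I)(C\delta)$ and $\mp(y_{0}+J)(C\delta)$ as graphs of two quadratics $f_{I}, f_{J}$, observes $|(f_{I}-f_{J})'| \gtrsim \tau$ (the transversality coming from $\dist(I,J) \geq \tau$), concludes that the intersection of the two neighbourhoods has diameter $\lesssim C\delta/\tau$, and then simply counts: the $\delta$-separation of $P_{I}$ gives $\lesssim C/\tau$ admissible $x$'s, and the $\delta$-separation of $P_{J}$ gives $\lesssim C$ admissible $y$'s per $x$. You replace this with a two-dimensional change-of-variables argument for $\Phi_{\pm}(t_{1},t_{2}) = (t_{1},t_{1}^{2}) \pm (t_{2},t_{2}^{2})$: the Jacobian $|\det D\Phi_{\pm}| = 2|t_{1}-t_{2}| \gtrsim \tau$ encodes the same transversality, the measure of $\Phi_{\pm}^{-1}(B(z_{0},C\delta)) \cap (\pi(I)\times\pi(J))$ is then $\lesssim (C\delta)^{2}/\tau$, and the cardinality bound follows by packing disjoint $\sim\delta$-squares around the $\delta$-separated pairs. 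The paper's version is more elementary (mean-value theorem plus counting) and naturally factors the bound as a product; yours packages everything into a single integral and is arguably slicker, at the cost of invoking the 2D Jacobian formula and then having to convert area back to cardinality (thickening, disjoint squares, checking the Jacobian bound survives the thickening since $\delta\leq\tau$). Your parenthetical about the nontrivial regime is a fair observation and applies to the paper's proof equally: both arguments really give $\lesssim (C+1)^{2}/\tau$, which matches the stated $\lesssim C^{2}/\tau$ only for $C\gtrsim 1$; since the lemma is invoked in the paper only with $C=6$, this is harmless, but strictly speaking the stated bound cannot hold for all $C>0$, as the count can be $1$ while $C^{2}/\tau \to 0$.
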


\begin{proof} First, we record that
\begin{equation}\label{form101} \diam((x_{0} + I)(C\delta) \cap \pm(y_{0} + J)(C\delta)) \lesssim C\delta/\tau. \end{equation} 
This follows by parametrising the arcs $x_{0} + I =: G(f_{I})$ and $\pm(y_{0} + J) =: G(f_{J})$ as graphs of some quadratic functions "$f_{I}$" and "$f_{J}$", and noting that $(f_{I} - f_{J})' \gtrsim \tau$ (or $(f_{J} - f_{I})' \gtrsim \tau$) by assumption. We extend $x_{0} + I$ and $y_{0} + J$ so that $f,g \in C^{1}(\R)$, $f,g$ are $2$-Lipschitz, and the inequality $(f_{I} - f_{J})'(x) \gtrsim \tau$ remains valid for all $x \in \R$. Clearly $\diam (\{x \in \R : |(f - g)(x)| \leq r\}) \lesssim r/\tau$. Finally, it follows from the fact that $f,g$ are $2$-Lipschitz that every point $(x,y) \in G(f_{I})(C\delta) \cap G(f_{J})(C\delta)$ has $|(f - g)(x)| \lesssim C\delta$. This gives \eqref{form101}.

Now, if $(x,y) \in P_{I} \times P_{J}$ satisfies $|(x + x_{0}) \pm (y + y_{0})| \leq C\delta$, then certainly 
\begin{displaymath} x \in -x_{0} + \left( (x_{0} + I)(C\delta) \cap \pm(y_{0} + J)(C\delta) \right). \end{displaymath}
Since $P_{I}$ is $\delta$-separated, it follows from \eqref{form101} that the number of admissible "$x$" is $\lesssim C/\tau$. Finally, for every admissible $x \in P_{I}$, the number possible $y \in P_{J}$ satisfying $|(x + x_{0}) \pm (y + y_{0})| \leq C\delta$ is $\lesssim C$, by the $\delta$-separation of $P_{J}$. This completes the proof.  \end{proof}

\subsection{Proof of Theorem \ref{thm2}: initial reductions} Let $s \in (0,1)$, and let $\mu$ be a measure as in Theorem \ref{thm2}, satisfying $\mu(B(x,r)) \leq r^{s}$ for all $x \in \mathbb{P}$ and $r > 0$. In this section, the implicit constants in the "$\lesssim$" notation are allowed to depend on "$s$". We claim that
\begin{equation}\label{form44} \int |\hat{\mu}(\xi)|^{6} \chi_{R}(\xi) \, d\xi \lesssim R^{2 - 2s - \epsilon}, \qquad R \geq 1, \end{equation}
for some $\epsilon = \epsilon(s) > 0$, where $\chi_{R} \in \mathcal{S}(\R^{2})$ a Schwartz function satisfying $\chi_{R} \gtrsim \mathbf{1}_{B(R)}$, decaying rapidly outside $B(2R)$, and with $\spt \widehat{\chi_{R}} \subset B(\delta)$ (as usual $\delta = R^{-1}$). Concretely, it will be useful to take $\chi_{R}$ of the form
\begin{equation}\label{form102} \chi_{R} = \left( \widehat{\varphi_{\delta}} \right)^{6}, \end{equation}
where $\varphi_{\delta}(x) = \delta^{-2}\varphi(x/\delta)$, and $\varphi \in C^{\infty}(B(1))$, and $\widehat{\varphi_{\delta}} \geq 0$.

Here is a brief and informal description of the proof. We fix a small parameter $\epsilon = \epsilon(s) > 0$. We will first reduce the proof of \eqref{form44} to an "extremal" situation where the measure $\mu$ is concentrated on $\lesssim \delta^{-s - \epsilon}$ arcs $I \subset \mathbb{P}$ of length $\delta$, each satisfying $\delta^{s + \epsilon} \lesssim \mu(I) \leq \delta^{s}$. Roughly speaking, if the measure $\mu$ fails to look like this, the estimate \eqref{form44} will readily follow from Theorem \ref{L4thm}. After this reduction, in the next section, we will make the counter assumption that \eqref{form44} fails for some measure of the kind mentioned above. This information is then used to construct a $\delta$-discretised $(s,2s,\delta^{-C\epsilon})$-Furstenberg set $F \subset \R^{2}$ with $|F|_{\delta} \lesssim \delta^{-2s - C\epsilon}$, for some absolute constant $C > 0$. Choosing $\epsilon > 0$ sufficiently small will finally violate Theorem \ref{DBZ}, and the proof of \eqref{form44} will be complete.

We turn to the details. We start by reducing the proof of \eqref{form44} to the case where $\mu$ is "essentially constant" at scale $\delta$. This is a simple consequence of pigeonholing, but let us make the statement precise. Given a dyadic rational $r \in 2^{-\N}$, let $\mathcal{D}_{r}$ be the partition of $[-1,1)$ to dyadic intervals of length $r$. For $I \in \mathcal{D}_{r}$, we also write $\tilde{I} \subset \mathbb{P}$ for the arc "above" $I$ on $\mathbb{P}$. We define $\mu_{I} := \mu|_{\tilde{I}}$. Now, we claim that in order to prove \eqref{form44}, it suffices to do so for measures $\mu$ with the following extra property: there exists a constant $\kappa \in 2^{-\N}$ such that if $I \in \mathcal{D}_{\delta}$, then either
\begin{equation}\label{form59} \mu_{I} \equiv 0 \quad \text{or} \quad \kappa \leq \mu_{I}(\mathbb{P}) = \mu(\tilde{I}) \leq 2\kappa. \end{equation}
To see this, note that every measure $\mu$, as in the statement of the theorem, can be written as a series $\mu = \sum_{\kappa \in 2^{-\N}} \mu_{\kappa}$, where $\mu_{\kappa}$ has the additional property \eqref{form59}. Moreover, since $\|\widehat{\mu_{\kappa}}\|_{\infty} \leq \mu_{\kappa}(\R^{2}) \lesssim \kappa \cdot |\mathcal{D}_{\delta}| \sim \kappa R$, we have the trivial estimate
\begin{displaymath} \int \Big| \sum_{\kappa \leq \delta^{2}} \widehat{\mu_{\kappa}}(\xi) \Big|^{6} \chi_{R}(\xi) \, d\xi \lesssim \sum_{\kappa \leq \delta^{2}} \kappa^{-1} \int |\widehat{\mu_{\kappa}}(\xi)|^{6} \chi_{R}(\xi) \, d\xi \lesssim R^{8} \sum_{\kappa \leq \delta^{2}} \kappa^{5} \lesssim R^{-2}. \end{displaymath}
This is much better than what we claim at \eqref{form44}. On the other hand, since the sum over $\delta^{2} \leq \kappa \leq 1$ only contains $\lesssim \log(1/\delta) = \log R$ terms, we also have
\begin{displaymath} \int \Big| \sum_{\kappa \geq \delta^{2}} \widehat{\mu_{\kappa}}(\xi) \Big|^{6} \chi_{R}(\xi) \, d\xi \lesssim (\log R) \cdot R^{2 - 2s - \epsilon}, \end{displaymath} 
assuming that \eqref{form44} has already been established for each measure $\mu_{\kappa}$ individually. Thus, \eqref{form44} holds with "$\epsilon/2$" instead of "$\epsilon$" for the original measure $\mu$. 

From now on, we assume that $\mu$ satisfies the additional property \eqref{form59} for some $\kappa \in 2^{-\N}$. Another simple initial reduction is this: we may assume that
\begin{equation}\label{form66} \mu(\R^{2}) \geq \delta^{\epsilon} \end{equation}
for a small constant $\epsilon = \epsilon(s) > 0$ (whose value will be determined during the proof). Indeed, in the opposite case $\|\hat{\mu}\|_{L^{\infty}} \leq \delta^{\epsilon}$, and
\begin{displaymath} \int |\hat{\mu}(\xi)|^{6} \chi_{R}(\xi) \, d\xi \leq \delta^{2\epsilon} \int |\hat{\mu}(\xi)|^{4} \chi_{R}(\xi) \, d\xi \lesssim_{\epsilon} R^{2 - 2s - \epsilon}, \end{displaymath}
by Theorem \ref{L4thm} (or rather a version of it with the smooth cut-off $\chi_{R}$, which is easy to deduce from the proper statement).

We next reduce the proof of \eqref{form44} to a (partially) bilinear statement. To this end, let $\mathcal{W}$ be a Whitney decomposition of the set $\Omega := [-1,1)^{2} \, \setminus \, \{(x,x) : x \in [-1,1)\}$ into squares of the form $Q = I \times J$, where $I,J \in \mathcal{D}_{r}$ for some $r \in 2^{-\N}$. With this notation, we may write
\begin{align*} \hat{\mu}(\xi)^{2} = \iint e^{-2\pi i(x + y)\xi} \, d\mu(x) \, d\mu(y) & \stackrel{(\ast)}{=} \sum_{I \times J \in \mathcal{W}} \iint e^{-2\pi i(x + y)\xi} \, d\mu_{I}(x) \, d\mu_{J}(y)\\
& = \sum_{I \times J \in \mathcal{W}} \widehat{\mu_{I}}(\xi)\widehat{\mu_{J}}(\xi). \end{align*} 
Recall here that $\mu_{I} := \mu|_{\tilde{I}}$, where $\tilde{I} \subset \mathbb{P}$ is the arc "above" $I \in \mathcal{D}_{r}$. The equation $(\ast)$ uses the fact that the "Whitney squares" $\tilde{I} \times \tilde{J}$ partition $\mu \times \mu$ almost all of $\mathbb{P} \times \mathbb{P}$. For future reference, we immediately record the estimates
\begin{equation}\label{form56} \|\widehat{\mu_{I}}\|_{\infty} \leq \mu(\tilde{I}) \lesssim \ell(I)^{s} \quad \text{and} \quad \|\widehat{\mu_{J}}\|_{\infty} \leq \mu(\tilde{J}) \lesssim \ell(J)^{s}. \end{equation}
Now, we decompose
\begin{align*} \int |\hat{\mu}(\xi)|^{6} \chi_{R}(\xi) \, d\xi & = \int |\hat{\mu}(\xi)|^{2}\hat{\mu}(\xi)^{2}\overline{\hat{\mu}(\xi)}^{2} \chi_{R}(\xi) \, d\xi\\
& = \sum_{I \times J \in \mathcal{W}} \int |\hat{\mu}(\xi)|^{2}\widehat{\mu_{I}}(\xi)\widehat{\mu_{J}}(\xi)\overline{\hat{\mu}(\xi)}^{2} \chi_{R}(\xi) \, d\xi. \end{align*}
We denote the individual terms on the right hand side $\mathcal{F}(I \times J)$. To estimate these terms, fix a "separation constant" of the form $\tau := \delta^{\epsilon/100}$. Then, we write
\begin{displaymath} \int |\hat{\mu}(\xi)|^{6} \chi_{R}(\xi) \, d\xi = \mathop{\sum_{I \times J \in \mathcal{W}}}_{\ell(I) = \ell(J) \leq \tau} \mathcal{F}(I \times J) + \mathop{\sum_{I \times J \in \mathcal{W}}}_{\ell(I) = \ell(J) > \tau} \mathcal{F}(I \times J) =: \mathcal{F}_{\leq \tau} + \mathcal{F}_{> \tau}. \end{displaymath}
The main work of the proof will be to show that $\mathcal{F}_{> \tau} \leq R^{2 - 2s - \epsilon}$ if $\epsilon = \epsilon(s) > 0$ is chosen sufficiently small. A much easier task, carried out immediately below, is to show that $\mathcal{F}_{\leq \tau} \lessapprox R^{2 - 2s - \epsilon s/200}$. To do this, fix $I \times J \in \mathcal{W}$ with $\ell(I) = \ell(J) < \tau$, and start by applying \eqref{form56} and then Theorem \ref{L4thm} to estimate
\begin{displaymath} \mathcal{F}(I \times J) \lesssim \mu(\tilde{I})\ell(J)^{s} \int |\hat{\mu}(\xi)|^{4} \chi_{R}(\xi) \, d\xi \lesssim_{\epsilon} \mu(\tilde{I})\ell(J)^{s} R^{2 - 2s + \epsilon s/200}. \end{displaymath} 
By the properties of Whitney squares in the domain $\Omega$, if $I \times J \in \mathcal{W}$, then $J \subset CI$ for some absolute constant $C > 0$. This allows us to estimate as follows:
\begin{align*} \mathcal{F}_{\leq \tau} & \lesssim_{\epsilon} R^{2 - 2s + \epsilon s/200} \mathop{\sum_{I \times J \in \mathcal{W}}}_{\ell(I) = \ell(J) \leq \tau} \mu(\tilde{I})\ell(J)^{s}\\
& \leq R^{2 - 2s + \epsilon s/200} \sum_{r \leq \tau} r^{s} \sum_{I \in \mathcal{D}_{r}} \mathop{\sum_{J \in \mathcal{D}_{r}}}_{J \subset CI} \mu(\tilde{I}) \lesssim R^{2 - 2s + \epsilon s/200} \tau^{s} = R^{2 - 2s - \epsilon s/200}. \end{align*} 
This is what we claimed regarding the term $\mathcal{F}_{\leq \tau}$, so in the sequel we focus on $\mathcal{F}_{> \tau}$. We note that the number of elements in $\{I \times J \in \mathcal{W} : \ell(I) = \ell(I) \geq \tau\}$ is $\lesssim R^{\epsilon/50}$. It now suffices to prove an upper bound of the following form for the individual terms in the definition of $\mathcal{F}_{> \tau}$:
\begin{equation}\label{form57} \mathcal{F}(I \times J) \lesssim R^{2 - 2s - \epsilon}. \end{equation}
Once this has been accomplished, we may deduce that
\begin{displaymath} \mathcal{F}_{> \tau} \lesssim R^{2 - 2s - \epsilon} \cdot |\{I \times J \in \mathcal{W} : \ell(I) = \ell(J) \geq \tau\}| \leq R^{2 - 2s - \epsilon/2}. \end{displaymath}
This will conclude the proof of Theorem \ref{thm2}.

Most of the proof of \eqref{form57} will be contained in the next sections, but here we still reduce it to a special case where the constant "$\kappa$" from \eqref{form59} satisfies $\kappa \geq \delta^{s + \epsilon}$ (the upper bound $\kappa \lesssim \delta^{s}$ is also true, and follows immediately from the $s$-Frostman condition of $\mu$). To this end, fix $I \times J \in \mathcal{W}$ with $\ell(I) = \ell(J) \geq \tau$. Start by expanding
\begin{equation}\label{form60} \mathcal{F}(I \times J) = \int \hat{\mu}\widehat{\mu_{I}}\widehat{\mu_{J}}\overline{\hat{\mu}\hat{\mu}\hat{\mu}} \cdot \chi_{R} \stackrel{\eqref{form102}}{=} \int \hat{\mu}\widehat{\mu_{I}}\widehat{\mu_{J}}\overline{\hat{\mu}\hat{\mu}\hat{\mu}} \cdot \widehat{\varphi_{\delta}}^{6}, \end{equation} 
and recalling that $\varphi_{\delta} = \delta^{-2}\varphi(\cdot/\delta) \in C^{\infty}_{c}(\R^{2})$ satisfies $\spt \varphi_{\delta} \subset B(\delta)$. Since $\mu(B(x,\delta)) \lesssim \kappa$ for all $x \in \mathbb{P}$ by \eqref{form59}, we have
\begin{equation}\label{form58} \|\mu \ast \varphi_{\delta}\|_{\infty} \lesssim \delta^{-2}\kappa, \end{equation}
and $\mu_{I},\mu_{J}$ satisfy a similar estimate, being restrictions of $\mu$. Now, Lemma \ref{lemma3} will be applicable to the right hand side of \eqref{form60}. To make this precise, let $P,P_{I},P_{J}$ be $\delta$-nets in the supports of $\mu,\mu_{I},\mu_{J}$, respectively. Taking into account \eqref{form58}, Lemma \ref{lemma3} (with $d = 2$) applied to the functions $\mu_{j} \in \{\mu \ast \varphi_{\delta},\mu_{I} \ast \varphi_{\delta},\mu_{J} \ast \varphi_{\delta}\}$, $1 \leq j \leq 6$, implies that
\begin{displaymath} \mathcal{F}(I \times J) \lesssim \delta^{-2}\kappa^{6} |\{(x_{1},\ldots,x_{6}) \in P_{I} \times P_{J} \times P^{4}: |(x_{1} + x_{2} + x_{3}) - (x_{4} + x_{5} + x_{6})| \leq 6\delta\}|. \end{displaymath}
We expand the count over the $6$-tuples as
\begin{displaymath} \sum_{x_{3},\ldots,x_{6} \in P} |\{(x_{1},x_{2}) \in P_{I} \times P_{J} : |(x_{1} + x_{2} + x_{3}) - (x_{4} + x_{5} + x_{6})| \leq 6\delta\}|.  \end{displaymath} 
Since $\dist(I,J) \geq \tau$, it follows from Lemma \ref{lemma4} that each term in the sum here is $\lesssim \tau^{-1} = \delta^{-\epsilon/100}$. Consequently,
\begin{equation}\label{form62} \mathcal{F}(I \times J) \lesssim \delta^{-2 - \epsilon/100}\kappa^{6} \cdot |P|^{4} \lesssim \delta^{-2 - \epsilon/100}\kappa^{2}. \end{equation}
In the second inequality, we used the lower bound $\mu(I) \geq \kappa$ from the almost constancy property \eqref{form59} to deduce that $|P| \lesssim \kappa^{-1}$. From the inequality above, we finally see that if $\kappa \leq \delta^{s + \epsilon}$, then $\mathcal{F}(I \times J) \lesssim \delta^{2s - 2 - \epsilon/100 + 2\epsilon} \leq R^{2 - 2s - \epsilon}$, and \eqref{form57} has been established. So, the remaining -- and most substantial -- case in the proof of \eqref{form57} is where $\kappa \geq \delta^{s + \epsilon}$. In this case, we record that
\begin{equation}\label{form46} |P| \lesssim \delta^{-s - \epsilon}. \end{equation}
where we recall that $P$ is a $\delta$-net in $\spt \mu$. We record here that $P$ is a $(\delta,s,C\delta^{-2\epsilon})$-set of cardinality $|P| \gtrsim \delta^{-s + \epsilon}$. Indeed, if $x \in \mathbb{P}$ and $r \geq \delta$, note that $B(x,2r)$ contains a $\delta$-arc $\tilde{I}$ of $\mu$ measure $\mu(\tilde{I}) \sim \kappa \geq \delta^{s + \epsilon}$ around every point $y \in P \cap B(x,r)$ (this is because of the $\kappa$-almost constancy property of $\mu$, and $P \subset \spt \mu$). Therefore,
\begin{displaymath} |P \cap B(x,r)| \lesssim \kappa^{-1}\mu(B(x,2r)) \lesssim \delta^{-\epsilon} \cdot \left(\frac{r}{\delta} \right)^{s}, \qquad x \in \mathbb{P}, \, r \geq \delta. \end{displaymath} 
The lower bound $|P| \gtrsim \delta^{-s + \epsilon}$ follows from \eqref{form66}: indeed $\delta^{\epsilon} \leq \mu(\R^{2}) \lesssim |P|\kappa \lesssim |P|\delta^{s}$.

\subsection{Finding an $(s,2s)$-Furstenberg set of circles}\label{circleFurstenberg} We then proceed to prove the inequality \eqref{form57} under the assumption \eqref{form46}. In brief, we will show that if \eqref{form46} fails, then we can construct a "$2s$-dimensional" family of circles centred along the $x$-axis, all of which contain an "$s$-dimensional" subset of a fixed "$2s$-dimensional set". This will eventually lead to a contradiction with the non-existence of $2s$-dimensional $(s,2s)$-Furstenberg sets.

We have already seen, as a consequence of Lemma \ref{lemma3}, that
\begin{displaymath} \mathcal{F}(I \times J) \lesssim \delta^{6s-2} |\{(x_{1},\ldots,x_{6}) \in P_{I} \times P_{J} \times P^{4}: |(x_{1} + x_{2} + x_{3}) - (x_{4} + x_{5} + x_{6})| \leq 8\delta\}|, \end{displaymath}
where we already plugged in the (trivial) upped bound $\kappa \lesssim \delta^{s}$. Let $E_{3}$ be the cardinality of $6$-tuples on the right hand side. What remains to be done is to show that
\begin{equation}\label{form61} E_{3} \leq \delta^{-4s + \epsilon} \end{equation} 
for some $\epsilon = \epsilon(s) > 0$, and for all $\delta > 0$ small enough. This will be true if (i) the separation $\dist(I,J) \geq \delta^{\epsilon/100}$ is valid for $\epsilon > 0$ small enough, and (ii) the upper bound \eqref{form46} holds for $\epsilon > 0$ small enough, both requirements only depending on $s \in (0,1)$. 
 To prove \eqref{form61}, we start by expanding
\begin{equation}\label{form67} E_{3} = \sum_{x_{3},y_{1},y_{2},y_{3} \in P} |\{(x_{1},x_{2}) \in P_{I} \times P_{J} : |(x_{1} + x_{2} + x_{3}) - (y_{1} + y_{2} + y_{3})| \leq 6\delta\}|. \end{equation}
By the separation $\dist(P_{I},P_{J}) \geq \tau = \delta^{\epsilon/100} \geq \delta^{\epsilon}$, and Lemma \ref{lemma4}, we have the uniform upper bound
\begin{equation}\label{form50} |\{(x_{1},x_{2}) \in P_{I} \times P_{J} : |x_{1} + x_{2} + X| \leq 6\delta\}| \lesssim \delta^{-\epsilon}, \quad X \in \R^{2}. \end{equation}
We now make the counter assumption that 
\begin{equation}\label{form51} E_{3} \gtrapprox \delta^{-4s}. \end{equation} 
Here, and in the remainder of the argument, the notation "$\lessapprox$" and "$\gtrapprox$" is allowed to hide constants of the form $C\delta^{-C\epsilon}$ for an absolute constant $C > 0$. So, in particular \eqref{form50} tells us that the left hand side is $\lessapprox 1$ for all $X \in \R^{2}$. We also say that a set $P' \subset \R^{d}$ is a $(\delta,t)$-set if $P'$ is a $(\delta,t,C)$-set with $C \lessapprox 1$. 

Now, apply \eqref{form50} to $X = x_{3} - (y_{1} + y_{2} + y_{3})$, as in \eqref{form67}. Recall that $E_{3} \gtrapprox \delta^{-4s}$ by \eqref{form51}, and on the other hand the sum in \eqref{form67} only contains $\lessapprox \delta^{-4s}$ terms, by \eqref{form46}. These facts together imply that
\begin{equation}\label{form52} |\{(x_{1},x_{2}) \in P_{I} \times P_{J} : |(x_{1} + x_{2} + x_{3}) - (y_{1} + y_{2} + y_{3})| \leq 6\delta\}| \geq 1 \end{equation}
for $\gtrapprox \delta^{-4s}$ quadruples $(x_{3},y_{1},y_{2},y_{3}) \in P^{4}$.

We restate this information in more convenient form. Since the number of quadruples $(x_{3},y_{1},y_{2},y_{3})$ satisfying \eqref{form52} is $\gtrapprox \delta^{-4s}$, and $|P| \lessapprox \delta^{-s}$, there exists a fixed point $Y := y_{3} \in P$ such that \eqref{form52} holds for $\gtrapprox \delta^{-3s}$ triples $(x_{3},y_{1},y_{2}) \in P^{3}$. This point $Y \in P$ will not change during the remainder of the proof. 

Further, since the number of triples $(x_{3},y_{1},y_{2})$ is $\gtrapprox \delta^{-3s}$, and again $|P| \lessapprox \delta^{-s}$, we may deduce the following: there exists a set $\mathcal{S} \subset P \times P$ of $|\mathcal{S}| \approx \delta^{-2s}$ pairs $(y_{1},y_{2})$ with the property that \eqref{form52} holds for $\gtrapprox \delta^{-s}$ different choices $x_{3} \in P$ (for $y_{1},y_{2},Y$ fixed). In symbols, the cardinality of the set
\begin{equation}\label{form55} P(y_{1},y_{2}) := \{x_{3} \in P : \eqref{form52} \text{ holds for the quadruple $(x_{3},y_{1},y_{2},Y)$}\} \end{equation} 
is $|P(y_{1},y_{2})| \gtrapprox \delta^{-s}$ for all $(y_{1},y_{2}) \in \mathcal{S}$.

We briefly explain what happens next before giving the details. The set $\mathcal{S}$ will be identified with a "$(\delta,2s)$-set of circles" $S_{(y_{1},y_{2})} \subset \R^{2}$, all centred along the $x$-axis. Given a circle $S = S_{(y_{1},y_{2})}$ with $(y_{1},y_{2}) \in \mathcal{S}$, the condition $|P(y_{1},y_{2})| \gtrapprox \delta^{-s}$ will translate into the statement that the $(\approx \delta)$-neighbourhood of $S$ contains a $(\delta,s)$-set of cardinality $\gtrapprox \delta^{-s}$. Finally, it turns out that the union of all these $(\delta,s)$-sets is contained in a set of the form $F := T(P \times P)$, where $T \colon \R^{4} \to \R^{2}$ is an $O(1)$-Lipschitz linear map. In particular, $|F|_{\delta} \lesssim |P \times P| \lessapprox \delta^{-2s}$. These properties allow us to build (in Section \ref{circlesToLines}) a $\delta$-discretised $(s,2s)$-Furstenberg set of cardinality $\lessapprox \delta^{-2s}$, and this will violate Theorem \ref{DBZ}. 

We then define the sets $\mathcal{S}$ and $F$, which are inspired by Lemma \ref{demeterLemma}. For every $(y_{1},y_{2}) \in \mathcal{S}$ (or more generally $(y_{1},y_{2}) \in \R^{2} \times \R^{2}$), we write
\begin{equation}\label{form54} y_{1} + y_{2} + Y =: \sigma = (\sigma_{1},\sigma_{2}) \in \R^{2}, \end{equation}
and we define the circle
\begin{equation}\label{form63} S_{(y_{1},y_{2})} := S_{\sigma_{1},\sigma_{2}} = \partial B\left((2\sigma_{1},0),\sqrt{6\sigma_{2} - 2\sigma_{1}^{2}}\right). \end{equation} 
The notation $S_{\sigma_{1},\sigma_{2}}$ is familiar from Lemma \ref{demeterLemma}, and $6\sigma_{2} - 2\sigma_{1}^{2} \geq 0$, as observed in Remark \ref{rem1}. The definition of $S_{(y_{1},y_{2})}$ also depends on $Y \in P$, but this point can be viewed as "fixed forever". We then define the ("Furstenberg") set $F$ as
\begin{equation}\label{defF} F := \{(3(\pi(x) + \pi(y)),\sqrt{3}(\pi(x) - \pi(y))) : x,y \in P\}, \end{equation}
where as usual $\pi(\xi_{1},\xi_{2}) = \xi_{1}$. Evidently $F$ is the image of the $\delta$-separated set $P \times P$ under a certain $O(1)$-Lipschitz linear map $T \colon \R^{4} \to \R^{2}$. In particular,
\begin{equation}\label{form53} |F|_{\delta} \lesssim |P \times P| \lessapprox \delta^{-2s}. \end{equation}
The linear map $T$ is closely connected with the map "$A$" from Lemma \ref{demeterLemma}, indeed $T(x,y) = A(\pi(x),\pi(y))$ for all $(x,y) \in \R^{4}$.

Next, we claim that if $\mathbf{C} \approx 1$ is a suitable constant, and $(y_{1},y_{2}) \in \mathcal{S}$ is fixed, then there exists a $(\delta,s)$-set 
\begin{equation}\label{form103} F_{(y_{1},y_{2})} \subset F \cap S_{(y_{2},y_{2})}(\mathbf{C}\delta) \quad \text{with} \quad |F_{(y_{1},y_{2})}| \approx \delta^{-s}. \end{equation}
We will infer this by showing that the circle $S_{(y_{1},y_{2})}$ has radius $\approx 1$, and
\begin{equation}\label{form68} [\pi(F \cap S_{(y_{2},y_{2})}(\mathbf{C}\delta)](C\delta) \supset 3 \cdot [\pi(\sigma) - \pi(P(y_{1},y_{2}))], \end{equation}
where $\sigma = y_{1} + y_{2} + Y$, and $C > 0$ is an absolute constant. This implies \eqref{form103}. Indeed, recall that $P(y_{1},y_{2}) \subset \mathbb{P}$ is a subset of $P$ of cardinality $|P(y_{1},y_{2})| \approx \delta^{-s}$, for every $(y_{1},y_{2}) \in \mathcal{S}$. We observed below \eqref{form46} that $P$ is a $(\delta,s)$-set of cardinality $|P| \approx \delta^{-s}$, and these properties are inherited (with slightly worse constants) by any subset of cardinality $\approx \delta^{-s}$. In particular, $P(y_{1},y_{2})$ is a $(\delta,s)$-set. Since $P(y_{1},y_{2}) \subset \mathbb{P}$, the same is true of $\pi(P(y_{1},y_{2}))$, and therefore the set on the right hand side of \eqref{form68}. Thus, \eqref{form68} shows that $\pi(F \cap S_{(y_{2},y_{2})}(\mathbf{C}\delta))$ contains a $(\delta,s)$-set of cardinality $\gtrapprox \delta^{-s}$. But since $S_{(y_{2},y_{2})}$ has radius radius $\approx 1$ (as we will prove), it follows that $F \cap S_{(y_{2},y_{2})}(\mathbf{C}\delta)$ itself must contain a $(\delta,s)$-set $F_{(y_{1},y_{2})}$ of cardinality $|F_{(y_{1},y_{2})}| \approx \delta^{-s}$, as claimed.

We then verify the inclusion \eqref{form68}. Fix $(y_{1},y_{2}) \in \mathcal{S}$, and write $\sigma := y_{1} + y_{2} + Y$. Denote $r := 6\sigma_{2} - 2\sigma_{1}^{2}$, the square of the radius of the circle $S_{(y_{1},y_{2})} = S_{\sigma_{1},\sigma_{2}}$ defined in \eqref{form63}. 

Continuing with the proof of \eqref{form68}, we fix $x_{3} \in P(y_{1},y_{2})$. By definition of $P(y_{1},y_{2})$ (see \eqref{form55}), this means that the property \eqref{form52} holds for the quadruple $(x_{3},y_{1},y_{2},Y)$: there exists at least one pair $(x_{2},x_{3}) \in P_{I} \times P_{J}$ such that
\begin{equation}\label{form64} |(x_{1} + x_{2} + x_{3}) - \sigma| = |(x_{1} + x_{2} + x_{3}) - (y_{1} + y_{2} + Y)| \leq 6\delta. \end{equation}
We then apply Lemma \ref{demeterLemma}. Write $x_{j} =: (\xi_{j},\xi_{j}^{2})$ for $1 \leq j \leq 3$. It follows from \eqref{form64} and Lemma \ref{demeterLemma} that either
\begin{equation}\label{form65} T(x_{1},x_{2}) = (3(\pi(x_{1}) + \pi(x_{2})),\sqrt{3}(\pi(x_{1}) - \pi(x_{2}))) = A(\xi_{1},\xi_{2}) \in S_{(y_{1},y_{2})}(C\delta/\sqrt{r}), \end{equation}  
or
\begin{displaymath} T(x_{1},x_{2}) \in B((2\sigma_{1},0),C\sqrt{\delta}), \end{displaymath}
where the latter case occurs if $r \leq \delta$. In both cases $T(x_{1},x_{2}) \in F$, by definition (see \eqref{defF}). We also note that in both cases $T(x_{1},x_{2}) \in B((2\sigma_{1},0),\rho)$ where $\rho := C\sqrt{\delta}$ if $r \leq \delta$, and $\rho :=  \sqrt{r} + C\delta/\sqrt{r}$ if $r > \delta$. We will next infer from all of the above that $r \gtrapprox 1$. 

First, we use \eqref{form64} to deduce that $\pi(\sigma) - \pi(x_{3})$ lies at distance $\lesssim \delta$ from the point
\begin{displaymath} \pi(x_{1}) + \pi(x_{2}) = \tfrac{1}{3} \cdot \pi(T(x_{1},x_{2})) \end{displaymath}
We have just seen that all the points $\pi(T(x_{1},x_{2}))$ obtained this way (for various $x_{3} \in P(y_{1},y_{2})$) lie in an interval of length $\sim \rho$ centred at $2\sigma_{1}$. But $P(y_{1},y_{2}) \subset \mathbb{P}$ is a $(\delta,s)$-set, so
\begin{displaymath} \diam(\pi(\sigma) - \pi(P(y_{1},y_{2}))) \sim \diam(P(y_{1},y_{2})) \approx 1. \end{displaymath} 
This forces $\rho \approx 1$, hence also $r \approx 1$. In particular, we are safely outside the case $r \leq \delta$, and therefore \eqref{form65} is true for all points $T(x_{1},x_{2})$. We infer that
\begin{displaymath} \pi(x_{1}) + \pi(x_{2}) = \tfrac{1}{3} \cdot \pi(T(x_{1},x_{2})) \in \tfrac{1}{3} \cdot \pi(F \cap S_{(y_{1},y_{2})}(\mathbf{C}\delta)), \end{displaymath} 
where $\mathbf{C} \approx 1$. Using once more that $\pi(\sigma) - \pi(x_{3})$ lies at distance $\lesssim \delta$ from $\pi(x_{1}) + \pi(x_{2})$ by \eqref{form64}, we may finally conclude that 
\begin{displaymath} 3(\pi(\sigma) - \pi(x_{3})) \in [\pi(F \cap S_{(y_{1},y_{2})}(\mathbf{C}\delta))](C\delta) = (\pi(F_{(y_{1},y_{2})}))(C\delta), \end{displaymath}
where $C > 0$ is absolute. This is what we claimed in \eqref{form68}.

\subsection{Mapping circles to lines and concluding the proof of Theorem \ref{thm2}}\label{circlesToLines} We start by taking stock of what we have proven so far. We have constructed the following objects:
\begin{enumerate}
\item\label{P1} $P \subset \mathbb{P}$ is a $(\delta,s)$-set of cardinality $|P| \approx \delta^{-s}$,
\item\label{P2} $\mathcal{S} \subset P \times P$ is a $\delta$-separated set of cardinality $|\mathcal{S}| \approx \delta^{-2s}$.
\item\label{P3} $F \subset \R^{2}$ is a set with $|F|_{\delta} \lessapprox \delta^{-2s}$.
\item\label{P4} For every $(y_{1},y_{2}) \in \mathcal{S}$, the intersection $F \cap S_{(y_{1},y_{2})}(\mathbf{C}\delta)$ contains a $(\delta,s)$-set $F_{(y_{1},y_{2})}$ of cardinality $\approx \delta^{-s}$, where $\mathbf{C} \approx 1$. Here $S_{(y_{1},y_{2})}$ is the circle 
\begin{displaymath} S_{(y_{1},y_{2})} = \partial B\left((2\sigma_{1},0),\sqrt{6\sigma_{2} - 2\sigma_{1}^{2}}\right), \quad (\sigma_{1},\sigma_{2}) = y_{1} + y_{2} + Y. \end{displaymath}
\item\label{P5} $6\sigma_{2} - 2\sigma_{1}^{2} \approx 1$ for all $(y_{1},y_{2}) \in \mathcal{S}$.
\end{enumerate}
In property \eqref{P5} it should be understood that $\sigma_{1},\sigma_{2}$ refer to the coordinates of $y_{1} + y_{2} + Y$. For future reference, we add one more item:
\begin{itemize}
\item The conclusion of \eqref{P4} holds if $F$ is replaced by $F \cap \He := F \cap \{(x,y) : y \geq 0\}$.
\end{itemize}
Indeed, if we are so unlucky that most of $F \cap S_{(y_{1},y_{2})}(\mathbf{C}\delta)$ is contained in $\C \, \setminus \, \He$, for most $(y_{1},y_{2}) \in \mathcal{S}$, then we simply replace $F$ by the set $F' := \{(x,-y) : (x,y) \in F\}$. We also recall here that the notation $A \lessapprox B$ means: $A \leq C\delta^{-C\epsilon}B$, where $\epsilon > 0$ was the small parameter fixed in the previous section, and $C > 0$ is absolute.

This is all the data from the previous section we need to complete the proof of Theorem \ref{thm2}. The moral is: the family of all circles centred along the $x$-axis has the same incidence geometric properties as the family $\mathcal{A}(2,1)$. Indeed, there is an explicit map $\mathcal{G} \colon \C \to \C$ which sends circles centred along the $x$-axis to \emph{chords} of $B(1)$. I warmly thank Josh Zahl for finding this map for us! In retrospect, this map is the one which transforms the \emph{Poincar\'e half-plane model} in hyperbolic geometry to the \emph{Beltrami-Klein (disc) model}.

Roughly speaking, the set of circles $S_{(y_{1},y_{2})}$, $(y_{1},y_{2}) \in \mathcal{S}$, is a "$(\delta,2s)$-set of circles", because the parameter set $\mathcal{S} \subset P \times P$ is a $(\delta,2s)$-set. To be more precise, we will show the set of chords $\mathcal{G}(S_{(y_{1},y_{2})} \cap \{y \geq 0\})$, $(y_{1},y_{2}) \in \mathcal{S}$, spans a $(\delta,2s)$-subset of $\mathcal{A}(2,1)$. 

If the reader finds plausible what we wrote above, then he may believe that (after the transformation by $\mathcal{G}$), the set $F$ appearing in properties \eqref{P3}-\eqref{P4} is essentially a $(s,2s)$-Furstenberg set with $|F|_{\delta} \lessapprox \delta^{-2s}$. Such a set should not exist by Theorem \ref{DBZ}, and this contradiction will eventually conclude the proof of Theorem \ref{main}. 

We then turn to the details. We spell out the map $\mathcal{G}$ immediately. It is a composition of the form $\mathcal{G} = \mathcal{F} \circ \mathcal{C}$, where (in complex notation)
\begin{displaymath} \mathcal{C}(z) = \frac{z - i}{z + i} \quad \text{and} \quad \mathcal{F}(z) = \frac{2z}{1 + |z|^{2}}. \end{displaymath}
See Figure \ref{fig2}. The M\"obius map $\mathcal{C}$ is the \emph{Cayley transform}. Every M\"obius map sends circles to circles or lines, and $\mathcal{C}$ sends the $x$-axis to the unit circle $S^{1}$. Since $\mathcal{C}(i) = 0$, one sees that $\mathcal{C}$ maps the upper half-plane $\mathbb{H} := \{(x,y) : y \geq 0\}$ to the closed unit disc $B(1)$. Circles along the $x$-axis are mapped to circles intersecting $S^{1}$ twice in straight angles. A slightly special case occurs when a circle $S = S(x,r)$, $x \in \R$, contains the singularity $z = -i$ of $\mathcal{C}$: then $S$ also contains the point $z = i$, and $\mathcal{C}(S)$ is a line passing through $C(i) = 0$.
\begin{figure}[h!]
\begin{center}
\begin{overpic}[scale = 0.8]{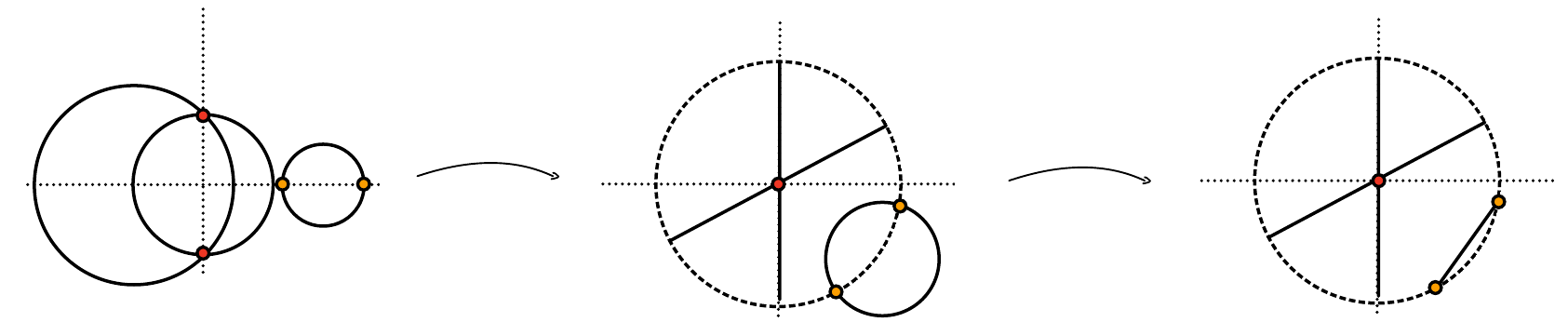}
\put(14,14){$i$}
\put(14,2){$-i$}
\put(31,11){$\mathcal{C}$}
\put(68.5,11){$\mathcal{F}$}
\put(48,9.5){$0$}
\put(86,9.5){$0$}

\end{overpic}
\caption{The maps $\mathcal{C}$ and $\mathcal{F}$.}\label{fig2}
\end{center}
\end{figure}

In the language of hyperbolic geometry, $\mathcal{C}$ maps the \emph{Poincar\'e half-plane model} to the \emph{Poincar\'e disc model}, where the geodesics are precisely the circles intersecting $S^{1}$ in straight angles. It is more surprising that the Poincar\'e disc model can be further mapped (by $\mathcal{F}$) to the \emph{Beltrami-Klein model}, where the geodesics are chords of $S^{1}$. This is accomplished by the map $\mathcal{F}$. It is clear from the formula that
\begin{displaymath} \mathcal{F}(B(1)) = B(1) \quad \text{and} \quad \mathcal{F}|_{S^{1}} = \mathrm{id}. \end{displaymath}
It is a bit less easy to see that the (unique) circular arc intersecting $\{a,b\} \subset S^{1}$ in straight angles gets mapped to the chord $[a,b] \subset B(1)$ under $\mathcal{F}$. This is a standard fact of hyperbolic geometry, but it was not easy to find a simple (fully) geometric argument, so we provide one in Appendix \ref{appA}.

It is clear that $\mathcal{C}$ is bilipschitz in any bounded subset of $\He$, and $\mathcal{F}$ is certainly bilipschitz on the image $C(\He) = B(1)$. So, the composition $\mathcal{G} = \mathcal{F} \circ \mathcal{C}$ is also bilipschitz on any bounded subset of $\He$. This implies that the neighbourhoods $S_{(y_{1},y_{2})}(\mathbf{C}\delta) \cap \He$ (see \eqref{P4}) are mapped to $\mathbf{C}'\delta$-neighbourhoods of chords inside $B(1)$, for some $\mathbf{C}' \approx 1$. In particular,
\begin{equation}\label{form87} \mathcal{G}(S_{(y_{1},y_{2})}(\mathbf{C}\delta) \cap \He) \subset \ell_{(y_{1},y_{2})}(\mathbf{C}'\delta), \end{equation}
where $\ell_{(y_{1},y_{2})} \in \mathcal{A}(2,1)$ is the line spanned by the chord $\mathcal{G}(S_{(y_{1},y_{2})} \cap \He)$. For similar reasons, it is clear that for every $(y_{1},y_{2}) \in \mathcal{S}$,
\begin{itemize}
\item the $(\delta,s)$-subset of $F \cap S_{(y_{1},y_{2})}(\mathbf{C}\delta) \cap \He$ is mapped to a $(\delta,s)$-subset of $\ell_{(y_{1},y_{2})}(\mathbf{C}'\delta)$,
\item $|\mathcal{G}(F \cap \He)|_{\delta} \lessapprox \delta^{-2s}$. 
\end{itemize}
Have we already established that $F' := \mathcal{G}(F \cap \He)$ is a $\delta$-discretised $(s,2s)$-Furstenberg set with $|F'|_{\delta} \lessapprox \delta^{-2s}$? This would violate Theorem \ref{DBZ} and conclude the proof of Theorem \ref{thm2}. Unfortunately, the most technical piece is still missing: we need to verify that the family of lines $\mathcal{L} := \{\ell_{(y_{1},y_{2})} : (y_{1},y_{2}) \in \mathcal{S}\}$, is a $(\delta,2s)$-set of cardinality $|\mathcal{L}| \gtrapprox \delta^{-2s}$. More precisely, we will show that $\mathcal{L}$ contains such a subset of lines.

We start with a few auxiliary results:

\begin{lemma}\label{lemma5} The set $\{y_{1} + y_{2} + Y : (y_{1},y_{2}) \in \mathcal{S}\}$ contains a $(\delta,2s)$-set $\Sigma$ of cardinality $\approx \delta^{-2s}$.
\end{lemma}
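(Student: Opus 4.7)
The plan is to find $\mathcal{S}^{*} \subset \mathcal{S}$ of nearly full cardinality on which the addition map $\Phi(y_{1},y_{2}) := y_{1} + y_{2} + Y$ is $2$-to-$1$ with bounded distortion, and then transfer the $(\delta,2s)$-set property of $P \times P$ through $\Phi$ to the image in $\R^{2}$. Since $P$ is a $(\delta,s)$-set with constants $\lessapprox 1$ and $|P| \approx \delta^{-s}$, a coordinatewise ball count shows that $P \times P \subset \R^{4}$ is a $(\delta,2s)$-set with constants $\lessapprox 1$; this property is inherited by any subset, in particular by $\mathcal{S}$.

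Parametrise $y_{i} = (\eta_{i},\eta_{i}^{2})$ and $Y = (\eta_{Y},\eta_{Y}^{2})$, and for a parameter $\gamma = C\epsilon$ (with $C = C(s)$ large, to be fixed) set
\[ \mathcal{S}^{*} := \{(y_{1},y_{2}) \in \mathcal{S} : |\eta_{1} - \eta_{2}| \geq \delta^{\gamma}\}. \]
For each $y_{1} \in P$, the $(\delta,s)$-property of $P$ bounds the number of $y_{2} \in P$ with $|\eta_{2} - \eta_{1}| \leq \delta^{\gamma}$ by $|P \cap B(y_{1},C'\delta^{\gamma})| \lessapprox \delta^{(\gamma-1)s}$, so
\[ |\mathcal{S} \setminus \mathcal{S}^{*}| \lessapprox |P| \cdot \delta^{(\gamma-1)s} \lessapprox \delta^{\gamma s - 2s}. \]
Choosing $C$ large enough compared to the exponents hidden in $\lessapprox$, this is $\leq |\mathcal{S}|/2$, so $|\mathcal{S}^{*}| \approx \delta^{-2s}$.

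Introduce coordinates $u := \eta_{1} + \eta_{2}$ and $v := \eta_{1} - \eta_{2}$; then $\Phi(y_{1},y_{2}) = (u + \eta_{Y},\, (u^{2}+v^{2})/2 + \eta_{Y}^{2})$ and the swap $y_{1} \leftrightarrow y_{2}$ is exactly $v \mapsto -v$. If $(y_{1},y_{2}),(y_{1}',y_{2}') \in \mathcal{S}^{*}$ satisfy $|\Phi(y_{1},y_{2}) - \Phi(y_{1}',y_{2}')| \leq r$, then $|u - u'| \leq r$ and $|v^{2} - v'^{2}| \lesssim r$, so from $|v|,|v'| \geq \delta^{\gamma}$ we deduce
\[ \bigl| |v| - |v'| \bigr| \leq \frac{|v^{2} - v'^{2}|}{|v| + |v'|} \lesssim r\delta^{-\gamma}. \]
Depending on whether $v$ and $v'$ agree in sign, $(y_{1},y_{2})$ therefore lies within $\R^{4}$-distance $\lesssim r\delta^{-\gamma}$ of $(y_{1}',y_{2}')$ or of $(y_{2}',y_{1}')$.

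Take $\Sigma$ to be a maximal $\delta$-separated subset of $\Phi(\mathcal{S}^{*}) \subset \{y_{1} + y_{2} + Y : (y_{1},y_{2}) \in \mathcal{S}\}$. For any $B(z,r) \subset \R^{2}$ with $r \geq \delta$, the previous paragraph confines $\Phi^{-1}(B(z,r)) \cap \mathcal{S}^{*}$ to at most two $\R^{4}$-balls of radius $\lesssim r\delta^{-\gamma}$, so the product $(\delta,2s)$-set property of $P \times P$ yields
\[ |\Sigma \cap B(z,r)| \leq |\mathcal{S}^{*} \cap \Phi^{-1}(B(z,r))| \lessapprox \bigl(r\delta^{-\gamma}/\delta\bigr)^{2s} = (r/\delta)^{2s} \cdot \delta^{-2\gamma s} \lessapprox (r/\delta)^{2s}, \]
since $\delta^{-2\gamma s} = \delta^{-O(\epsilon)}$ is absorbed in $\lessapprox$; the case $r = \delta$ also forces $|\Sigma| \gtrapprox \delta^{-2s}$. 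The main subtlety is the degeneracy of $\Phi$ on the diagonal $\{\eta_{1} = \eta_{2}\}$, where a perturbation of size $r$ moves preimages by $\sqrt{r}$ rather than $r$; this is dealt with by the $\delta^{\gamma}$-excision defining $\mathcal{S}^{*}$, which is affordable only because $P$ has dimension $s < 1$.
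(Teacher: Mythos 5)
Your argument is correct and takes a genuinely different route from the paper. Both proofs start by removing the near-diagonal pairs of $\mathcal{S}$ --- those with $|\eta_{1}-\eta_{2}|$ small, or equivalently lying in a single short arc --- using the $(\delta,s)$-property of $P$ to show that this discards $\lessapprox \delta^{\gamma s - 2s}$ pairs, a small fraction of $|\mathcal{S}| \approx \delta^{-2s}$ (provided $\gamma = C(s)\epsilon$ is chosen suitably). After this common step the two arguments diverge. The paper pigeonholes the surviving pairs over products of $\delta^{C\epsilon}$-arcs to isolate a single well-separated product $J_{1} \times J_{2}$ still carrying $\approx \delta^{-2s}$ pairs; on $J_{1} \times J_{2}$ the addition map is injective and $(\approx 1)$-bilipschitz, so its image is a $(\delta,2s)$-set outright. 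You instead keep the entire off-diagonal set $\mathcal{S}^{*}$, pass to the symmetric coordinates $(u,v) = (\eta_{1}+\eta_{2},\eta_{1}-\eta_{2})$, and verify directly that the addition map has distortion $\lesssim \delta^{-\gamma}$ modulo the swap $(y_{1},y_{2}) \leftrightarrow (y_{2},y_{1})$, then transfer the $(\delta,2s)$-property of $P \times P$ through the resulting two-to-one correspondence. Your version retains more of $\mathcal{S}$ and makes the source of the degeneracy completely explicit (the quadratic form $(u^{2}+v^{2})/2$ is the same one that produces the circles in Lemma~\ref{demeterLemma}), at the cost of the two-ball case analysis; the paper's pigeonhole is shorter because restricting to a fixed $J_{1} \times J_{2}$ with $J_{1} \neq J_{2}$ makes the map injective and sidesteps the swap symmetry. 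One small correction to your closing remark: the excision is affordable because $s > 0$, not because $s < 1$ --- for any $s \in (0,1]$ the near-diagonal is a $\delta^{\gamma s}$-fraction of $\mathcal{S}$, while at $s = 0$ it need not be small.
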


\begin{proof} Recall that $P \subset \mathbb{P}$ is a $(\delta,s)$-set, and $\mathcal{S} \subset P \times P$ has $|\mathcal{S}| \approx \delta^{-2s}$. Because $P$ is a $(\delta,s)$-set, every arc $J \subset \mathbb{P}$ of length $\mathcal{H}^{1}(J) \leq \delta^{C\epsilon}$ satisfies $|P \cap J| \lessapprox \delta^{C\epsilon s} \cdot \delta^{-s}$. In particular, if "$C$" here is chosen appropriately, at most $\tfrac{1}{2}|\mathcal{S}|$ pairs in $\mathcal{S}$ are contained in $(P \cap J) \times (P \cap J)$ for some fixed arc $J \subset \mathbb{P}$ of length $\leq \delta^{C\epsilon}$. This implies that we may find two arcs $J_{1},J_{2} \subset \mathbb{P}$ such that $\dist(J_{1},J_{2}) \approx 1$, $|P \cap J_{i}| \approx \delta^{-s}$, and
\begin{displaymath} |\mathcal{S} \cap (J_{1} \times J_{2})| \approx \delta^{-2s}. \end{displaymath}
Now, the map $g \colon (y_{1},y_{2}) \mapsto y_{1} + y_{2} + Y$ is $(\approx 1)$-bilipschitz on $J_{1} \times J_{2}$, and $\mathcal{S} \cap (J_{1} \times J_{2})$ is a $(\delta,2s)$-set. The image of $\mathcal{S} \cap (J_{1} \times J_{2})$ under "$g$" is a $(\delta,2s)$-set contained in $\{y_{1} + y_{2} + Y : (y_{1},y_{2}) \in \mathcal{S}\}$, which is denoted "$\Sigma$" from now on. \end{proof}

\begin{lemma}\label{lemma6} Let $\theta > 0$, and let $\Omega_{\theta} \subset B(1) \subset \R^{2}$ be the set
\begin{displaymath} \Omega_{\theta} := \{\sigma = (\sigma_{1},\sigma_{2}) \in B(10) : \sqrt{6\sigma_{2} - 2\sigma_{1}^{2}} \geq \theta\}. \end{displaymath}
The map 
\begin{displaymath} (\sigma_{1},\sigma_{2}) \mapsto \Phi(\sigma_{1},\sigma_{2}) = \left(2\sigma_{1} - \sqrt{6\sigma_{2} - 2\sigma_{1}^{2}},2\sigma_{1} + \sqrt{6\sigma_{2} - 2\sigma_{1}^{2}} \right) \end{displaymath}
is $O(\theta^{-1})$-bilipschitz on $\Omega_{\theta}$.
\end{lemma}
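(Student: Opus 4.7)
The plan is to show that $\Phi$ is a smooth bijection from $\Omega_{\theta}$ onto its image, with $\|D\Phi\| \lesssim \theta^{-1}$ and $\|D\Phi^{-1}\| \lesssim 1$ in operator norm, and then convert these pointwise Jacobian bounds into Lipschitz bounds by integrating along straight segments in two suitable convex regions.

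First I would invert $\Phi$ explicitly. If $(a,b) = \Phi(\sigma_{1},\sigma_{2})$, then $a+b = 4\sigma_{1}$ and $b-a = 2\sqrt{6\sigma_{2} - 2\sigma_{1}^{2}}$, so
\begin{displaymath} \Phi^{-1}(a,b) = \Bigl(\tfrac{a+b}{4},\ \tfrac{(b-a)^{2}}{24} + \tfrac{(a+b)^{2}}{48}\Bigr), \end{displaymath}
which is a smooth bijection from the half-plane $\{b \geq a\}$ back to $\{6\sigma_{2} - 2\sigma_{1}^{2} \geq 0\}$. A direct calculation of $D\Phi(\sigma)$ shows that each of its entries is of the form $2 \pm 2\sigma_{1}/\sqrt{6\sigma_{2} - 2\sigma_{1}^{2}}$ or $\pm 3/\sqrt{6\sigma_{2} - 2\sigma_{1}^{2}}$. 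Since $|\sigma_{1}| \leq 10$ on $\Omega_{\theta}$ and $\sqrt{6\sigma_{2} - 2\sigma_{1}^{2}} \geq \theta$ there, this gives $\|D\Phi\| \lesssim \theta^{-1}$ uniformly on $\Omega_{\theta}$. For $D\Phi^{-1}(a,b)$, all entries are affine in $(a,b)$, and $\Phi(\Omega_{\theta})$ is contained in the bounded strip $U := \{(a,b) : b \geq a,\ \max(|a|,|b|) \leq C\}$ for an absolute $C$ (using $|\sigma_{1}| \leq 10$ and $\sqrt{6\sigma_{2} - 2\sigma_{1}^{2}} \leq \sqrt{60}$ for $\sigma \in B(10)$). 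Hence $\|D\Phi^{-1}\| \lesssim 1$ on $U$.

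Second, I would use convexity to convert these pointwise bounds into Lipschitz bounds. The key observations are that (i) $\Omega_{\theta}$ is convex, being the intersection of $B(10)$ with the superlevel set $\{6\sigma_{2} - 2\sigma_{1}^{2} \geq \theta^{2}\}$, which is convex because $\sigma_{1} \mapsto \sigma_{1}^{2}$ is convex; and (ii) the target strip $U$ is convex, being the intersection of a half-plane with a box. Hence for any $\sigma, \sigma' \in \Omega_{\theta}$, the segment $[\sigma, \sigma']$ lies in $\Omega_{\theta}$, and the fundamental theorem of calculus gives $|\Phi(\sigma) - \Phi(\sigma')| \leq (\sup_{\Omega_{\theta}} \|D\Phi\|)|\sigma - \sigma'| \lesssim \theta^{-1} |\sigma - \sigma'|$. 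Conversely, since $\Phi(\sigma), \Phi(\sigma') \in U$, the segment between them lies in $U$, and applying the same argument to $\Phi^{-1}$ gives $|\sigma - \sigma'| = |\Phi^{-1}(\Phi(\sigma)) - \Phi^{-1}(\Phi(\sigma'))| \lesssim |\Phi(\sigma) - \Phi(\sigma')|$. Combining the two estimates yields the claimed $O(\theta^{-1})$-bilipschitz bound.

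The only delicate step is the bookkeeping needed to verify the convexity of $\Omega_{\theta}$ and to locate a bounded convex superset $U$ of $\Phi(\Omega_{\theta})$ on which $\Phi^{-1}$ is smooth with uniformly bounded derivative; once those structural facts are in place, the bilipschitz estimate is immediate, with the constant blowing up like $\theta^{-1}$ in one direction and bounded by an absolute constant in the other.
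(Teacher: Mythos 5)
Your proof is correct and takes a genuinely different route from the paper's. The paper factors $\Phi$ as a globally bilipschitz linear map composed with the simpler map $\Psi(\sigma_{1},\sigma_{2}) = (2\sigma_{1},\sqrt{6\sigma_{2}-2\sigma_{1}^{2}})$, and then works purely algebraically with the identity $\sqrt{A}-\sqrt{B} = (A-B)/(\sqrt{A}+\sqrt{B})$: the $O(\theta^{-1})$-Lipschitz bound falls out immediately (denominator $\geq 2\theta$, numerator $\lesssim |\sigma-\eta|$), and the co-Lipschitz bound is proved by a two-case split on whether $|\sigma_{1}-\eta_{1}|$ is comparable to $|\sigma-\eta|$. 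Your approach instead inverts $\Phi$ explicitly as a polynomial map, bounds the Jacobians $D\Phi$ and $D\Phi^{-1}$, and converts these to Lipschitz bounds via the convexity of $\Omega_{\theta}$ and of a bounded strip containing $\Phi(\Omega_{\theta})$. What the two approaches buy: the paper's argument is elementary (no calculus, no convexity verification) and matches the hands-on style of the surrounding estimates; yours is more systematic, makes the inverse and its domain transparent, and cleanly exhibits that the co-Lipschitz constant is $O(1)$ (independent of $\theta$) because $\Phi^{-1}$ is a fixed quadratic polynomial on a bounded set — a fact implicit but less visible in the paper's case analysis. Both proofs rely on the same two facts, namely $|\sigma_{1}|\lesssim 1$ on $\Omega_{\theta}$ and the lower bound $\sqrt{6\sigma_{2}-2\sigma_{1}^{2}}\geq\theta$, and both are complete. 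One small point worth double-checking in your writeup: you want $\Phi^{-1}$ to satisfy $\Phi^{-1}\circ\Phi=\mathrm{id}$ only on the closed region $\{6\sigma_{2}-2\sigma_{1}^{2}\geq 0\}$, which is indeed where $\Omega_{\theta}$ lives, so the chain $\Phi^{-1}(\Phi(\sigma))=\sigma$ is legitimate — but it is worth stating explicitly that $\Phi^{-1}$ as you define it is a left inverse only on that half-space.
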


The point of this technical lemma is that the $(\delta,2s)$-set $\Sigma$ found in Lemma \ref{lemma5} is contained in $\Omega_{\theta}$ for some $\theta \approx 1$ by the property \eqref{P5} listed at the head of the section. The map $\Phi$ encodes the two intersection points of the circle $S_{\sigma_{1},\sigma_{2}}$ with the $x$-axis.

\begin{proof}[Proof of Lemma \ref{lemma6}] It suffices to show that the map
\begin{displaymath} (\sigma_{1},\sigma_{2}) \mapsto \Psi(\sigma_{1},\sigma_{2}) = \left(2\sigma_{1},\sqrt{6\sigma_{2} - 2\sigma_{1}^{2}} \right) \end{displaymath}
is $O(\theta^{-1})$-bilipschitz on $\Omega_{\theta}$, because $\Phi$ is obtained by composing $\Psi$ with the globally bilipschitz map $(x,y) \mapsto (x - y,x + y)$. Regarding $\Psi$, the whole argument is based on writing
\begin{equation}\label{form71a} \left| \sqrt{6\sigma_{2} - 2\sigma_{1}^{2}} - \sqrt{6\eta_{2} - 2\eta_{1}^{2}} \right| = \frac{|6(\sigma_{2} - \eta_{2}) + 2(\sigma_{1}^{2} - \eta_{1}^{2})|}{\sqrt{6\sigma_{2} - 2\sigma_{1}^{2}} + \sqrt{6\eta_{2} - 2\eta_{1}^{2}}}. \end{equation}
The $O(\theta^{-1})$-Lipschitz property on $\Omega_{\theta}$ follows immediately. For the co-Lipschitz estimate, split into cases where $|\sigma_{1} - \eta_{2}| \sim |(\sigma_{1},\sigma_{2}) - (\eta_{1},\eta_{2})|$, and the opposite case. In the first case, observe that 
\begin{displaymath} |\Psi(\sigma_{1},\sigma_{2}) - \Psi(\eta_{1},\eta_{2})| \geq |\sigma_{1} - \eta_{1}| \sim |(\sigma_{1},\sigma_{2}) - (\eta_{1},\eta_{2})|. \end{displaymath}
In the opposite case, observe that $|\sigma_{1}^{2} - \eta_{1}^{2}| \ll |\sigma_{2} - \eta_{2}|$, and use \eqref{form71a}. \end{proof}
\begin{cor}\label{cor1} The set 
\begin{displaymath} \Phi(\Sigma) = \left\{\left(2\sigma_{1} - \sqrt{6\sigma_{2} - 2\sigma_{1}^{2}},2\sigma_{1} + \sqrt{6\sigma_{2} - 2\sigma_{1}^{2}} \right) : (\sigma_{1},\sigma_{2}) \in \Sigma\right\} \end{displaymath}
is a $(\delta,2s)$-set of cardinality $\approx \delta^{-2s}$. \end{cor}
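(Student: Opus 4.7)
The plan is to deduce Corollary \ref{cor1} almost immediately from Lemma \ref{lemma5}, Lemma \ref{lemma6}, and property \eqref{P5} at the head of the section, using nothing more than the general principle that $(\delta,t)$-set and cardinality properties are preserved under $O(1)$-bilipschitz maps (up to multiplicative constants in the $(\delta,t,C)$ parameter "$C$").

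First, I would recall from Lemma \ref{lemma5} that $\Sigma \subset \{y_{1} + y_{2} + Y : (y_{1},y_{2}) \in \mathcal{S}\}$ is a $(\delta,2s)$-set of cardinality $|\Sigma| \approx \delta^{-2s}$. Next, I would invoke property \eqref{P5}, which says that $6\sigma_{2} - 2\sigma_{1}^{2} \approx 1$ for every $\sigma = (\sigma_{1},\sigma_{2})$ of the form $y_{1} + y_{2} + Y$ with $(y_{1},y_{2}) \in \mathcal{S}$. Consequently, there is a constant $\theta \approx 1$ (independent of $\delta$) such that $\Sigma \subset \Omega_{\theta}$, with $\Omega_{\theta}$ as in Lemma \ref{lemma6}. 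Note also that $\Sigma \subset B(10)$ holds trivially since $y_{1},y_{2},Y \in \mathbb{P} \subset B(\sqrt{2})$.

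By Lemma \ref{lemma6}, the map $\Phi$ is $L$-bilipschitz on $\Omega_{\theta}$ for some $L = O(\theta^{-1}) \approx 1$. In particular, $\Phi|_{\Sigma}$ is a bijection onto $\Phi(\Sigma)$, so $|\Phi(\Sigma)| = |\Sigma| \approx \delta^{-2s}$. Moreover, if $x \in \R^{2}$ and $r \geq \delta$, then by bilipschitzness $\Phi^{-1}(B(x,r)) \cap \Sigma \subset B(\Phi^{-1}(x),Lr)$, so
\begin{displaymath} |\Phi(\Sigma) \cap B(x,r)| \leq |\Sigma \cap B(\Phi^{-1}(x),Lr)| \lessapprox \left(\frac{Lr}{\delta}\right)^{2s} \approx \left(\frac{r}{\delta}\right)^{2s}. \end{displaymath}
Since $\Phi(\Sigma)$ inherits $\delta/L$-separation from $\Sigma$, and after replacing it by a maximal $\delta$-separated subset (which loses at most a constant factor in cardinality and in the $(\delta,2s)$-set constant), we conclude that $\Phi(\Sigma)$ is a $(\delta,2s)$-set of cardinality $\approx \delta^{-2s}$.

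No step here should present a genuine obstacle: the inclusion $\Sigma \subset \Omega_{\theta}$ is just bookkeeping of property \eqref{P5}, the bilipschitzness is already packaged in Lemma \ref{lemma6}, and the transfer of the $(\delta,2s)$-set property under a bilipschitz map is standard. The only mild care needed is to absorb the Lipschitz constant $L \approx 1$ into the implicit $\lessapprox$-factors, which is harmless given the convention that $\lessapprox$ hides factors of the form $C\delta^{-C\epsilon}$.
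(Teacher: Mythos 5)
Your proposal is correct and takes essentially the same route as the paper: recall from Lemma \ref{lemma5} and property \eqref{P5} that $\Sigma$ is a $(\delta,2s)$-set contained in $\Omega_{\theta}$ for some $\theta \approx 1$, invoke Lemma \ref{lemma6} to conclude that $\Phi$ is $(\approx 1)$-bilipschitz there, and observe that bilipschitz maps preserve the $(\delta,2s)$-set property and cardinality. The paper's proof is a one-liner with exactly this content; you simply spell out the transfer of the $(\delta,2s)$-set condition under a bilipschitz map, which is the standard detail the paper leaves implicit.
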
 

\begin{proof} As discussed just before the proof of Lemma \ref{lemma6}, the set $\Sigma$ is contained in $\Omega_{\theta}$ for some $\theta \approx 1$. Thus $\Phi$ is $(\approx 1)$-bilipschitz on $\Sigma$, and such maps preserve $(\delta,2s)$-sets. \end{proof}

We record here that 
\begin{equation}\label{form71} \Phi(\Sigma) \subset \{(\xi_{1},\xi_{2}) \in [-10,10]^{2} : \xi_{2} - \xi_{1} \geq c\delta^{C\epsilon}\} =: [-10,10]^{2} \, \setminus \, \bigtriangleup, \end{equation}
where $c,C > 0$ are absolute constants. Indeed, recall that $6\sigma_{2} - 2\sigma_{1}^{2} \gtrapprox 1$ for $(y_{1},y_{2}) \in \mathcal{S}$ by \eqref{P5}. On the other hand, since $\Sigma \subset P + P + P \subset B(3)$, we have $\Phi(\Sigma) \subset [-10,10]^{2}$.

We recap what $\mathcal{G}$ does to circles centred on the $x$-axis. Every such circle is uniquely determined by its two intersection points with the $x$-axis (denoted $\R$). For $\xi_{1}, \xi_{2} \in \R$, let $S(\xi_{1},\xi_{2}) \subset \R^{2}$ be the circle centred at the $x$-axis with intersection points $\xi_{1},\xi_{2}$. Then, $\mathcal{C}$ first maps $S(\xi_{1},\xi_{2})$ to the circle $S'(\xi_{1},\xi_{2})$ which intersects $S^{1}$ in straight angles at the two points
\begin{displaymath} \mathcal{C}(\xi_{j}) = \frac{\xi_{j} - i}{\xi_{j} + i}, \qquad j \in \{1,2\}. \end{displaymath}
Next, $\mathcal{F}$ sends $S'(\xi_{1},\xi_{2}) \cap B(1)$ to the chord between $\mathcal{C}(\xi_{1})$ and $\mathcal{C}(\xi_{2})$. Consequently,
\begin{equation}\label{form88} \mathcal{G}(S(\xi_{1},\xi_{2}) \cap \He) = [\mathcal{C}(\xi_{1}),\mathcal{C}(\xi_{2})]. \end{equation}
In the proof below, it will be useful to keep in mind that $\mathcal{C}$ is bilipschitz $[-10,10] \to \mathcal{C}([-10,10]) \subset S^{1}$. In particular, if $(\xi_{1},\xi_{2}) \in [-10,10]^{2} \, \setminus \, \bigtriangleup$, recall \eqref{form71}, then $[\mathcal{C}(\xi_{1}),\mathcal{C}(\xi_{2})]$ is a chord of length $\approx 1$.
\begin{lemma}\label{lemma7} For $(\xi_{1},\xi_{2}) \in \R^{2}$ with $\xi_{1} \neq \xi_{2}$, let 
\begin{displaymath} \ell(\xi_{1},\xi_{2}) := \spa([\mathcal{C}(\xi_{1}),\mathcal{C}(\xi_{2})]) \in \mathcal{A}(2,1) \end{displaymath}
be the unique line containing the chord $[\mathcal{C}(\xi_{1}),\mathcal{C}(\xi_{2})]$. The map $(\xi_{1},\xi_{2}) \mapsto \ell(\xi_{1},\xi_{2}) \in \mathcal{A}(2,1)$ is $(\approx 1)$-bilipschitz on the set $[-10,10]^{2} \, \setminus \, \bigtriangleup$ introduced in \eqref{form71}. \end{lemma}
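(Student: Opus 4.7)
The plan is to factor the map $(\xi_1,\xi_2) \mapsto \ell(\xi_1,\xi_2)$ as the composition
\[
(\xi_1,\xi_2) \xrightarrow{\mathcal{C} \times \mathcal{C}} (p_1,p_2) \xrightarrow{\Lambda} \ell(p_1,p_2),
\]
where $\Lambda$ sends an ordered pair of distinct points on $S^1$ to the line through them. Since $\mathcal{C}(\xi) = (\xi - i)/(\xi+i)$ is a M\"obius transformation with derivative bounded above and away from zero on $[-10,10]$, the first map is bilipschitz with absolute constants; in particular $p_j := \mathcal{C}(\xi_j)$ satisfies $|p_1 - p_2| \sim |\xi_1 - \xi_2| \gtrsim \delta^{C\epsilon}$ on $[-10,10]^2 \setminus \bigtriangleup$. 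Everything therefore reduces to showing that $\Lambda$ is bilipschitz with constant $\lesssim \tau^{-1}$ on the set of ordered pairs $(p_1,p_2) \in S^1 \times S^1$ with $|p_1 - p_2| \geq \tau$, applied at $\tau := c\delta^{C\epsilon}$.

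To analyse $\Lambda$, I would parametrize each line by $(v,a)$, where $v \in S^1$ is the unit direction (sign fixed by $v \cdot (p_2 - p_1) > 0$) and $a$ is the foot of the perpendicular from the origin; since $|p_1| = |p_2| = 1$, this foot is exactly the midpoint of the chord, so $v = (p_2-p_1)/|p_2-p_1|$ and $a = (p_1+p_2)/2$. With $\pi_{L_j} = v_j v_j^T$ one has $\|v_1 v_1^T - v_2 v_2^T\| \sim |v_1 - v_2|$ whenever $v_1 \cdot v_2 > 0$, and the offset $a$ coincides with the ``$a_j$'' used in the definition of $d_{\mathcal{A}(2,1)}$, so altogether $d_{\mathcal{A}(2,1)}(\ell_1,\ell_2) \sim |v_1 - v_2| + |a_1 - a_2|$ on the relevant subset. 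The Lipschitz direction of $\Lambda$ is then immediate from the elementary bound $|w_1/|w_1| - w_2/|w_2|| \lesssim |w_1 - w_2|/\min(|w_1|,|w_2|)$ applied to $w_i = p_2^{(i)} - p_1^{(i)}$, combined with the trivial Lipschitz continuity of the midpoint map.

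For the co-Lipschitz direction I would invert $\Lambda$ explicitly: the identities $\langle a, v \rangle = 0$ and $|p_j|^2 = 1$ force $|p_2 - p_1| = 2\sqrt{1 - |a|^2}$ and
\[
p_1 = a - \sqrt{1-|a|^2}\,v, \qquad p_2 = a + \sqrt{1-|a|^2}\,v.
\]
The restriction $|p_2 - p_1| \geq \tau$ translates into $|a| \leq \sqrt{1 - \tau^2/4}$, on which set $a \mapsto \sqrt{1-|a|^2}$ is Lipschitz with constant $\lesssim \tau^{-1}$; this yields $|p_j^{(1)} - p_j^{(2)}| \lesssim \tau^{-1}(|v_1-v_2| + |a_1-a_2|)$, and composing with the bilipschitz inverse of $\mathcal{C} \times \mathcal{C}$ gives the claim. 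The only real bookkeeping obstacle is keeping the sign of $v$ consistent so that the equivalence $d_{\mathcal{A}(2,1)} \sim |v_1 - v_2| + |a_1 - a_2|$ genuinely holds; this is secured globally by the ordering $\xi_1 < \xi_2$ forced on $\Phi(\Sigma)$ by \eqref{form71}, so I anticipate no deeper obstacle.
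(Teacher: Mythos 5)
Your proposal takes a genuinely different route than the paper. The paper argues geometrically: it observes that a thin neighbourhood of a chord of length $\approx 1$ meets $S^{1}$ only near its two endpoints (equation \eqref{form72}), and then pins down $\mathcal{C}(\bar{\xi}_{1}),\mathcal{C}(\bar{\xi}_{2})$ by a short case analysis (scenarios (G), (B1), (B2), (B3)). You instead parametrise lines algebraically by the pair $(v,a)$, use the fact that the foot of the perpendicular from the origin is the midpoint of a chord of $S^1$, and invert this parametrisation explicitly. This is a clean and in some ways more transparent route, and the formulas $p_{j} = a \mp \sqrt{1 - |a|^{2}}\,v$ neatly encode the non-degeneracy as a lower bound on $1 - |a|^{2}$.

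However, there is a genuine gap in your handling of the sign of $v$. You assert that the ordering $\xi_{1} < \xi_{2}$ ``secures globally'' the equivalence $d_{\mathcal{A}(2,1)}(\ell_1,\ell_2) \sim |v_{1} - v_{2}| + |a_{1} - a_{2}|$. This is false. With the convention $v = (\mathcal{C}(\xi_{2}) - \mathcal{C}(\xi_{1}))/|\mathcal{C}(\xi_{2}) - \mathcal{C}(\xi_{1})|$ and $\xi_1 < \xi_2$ throughout, a short computation (the direction is $ie^{i(\phi_1 + \phi_2)/2}$ in the angle parametrisation of $S^1$, and the mean angle ranges over essentially the whole circle as $(\xi_1,\xi_2)$ ranges over $[-10,10]^2$) shows that two admissible pairs can have $v_{1} \cdot v_{2} < 0$, indeed $v_1 \approx -v_2$. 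For such pairs $\|\pi_{1} - \pi_{2}\|$ is small while $|v_{1} - v_{2}| \approx 2$, so the equivalence you rely on fails, and your chain $|p_j^{(1)} - p_j^{(2)}| \lesssim \tau^{-1}(|v_1 - v_2| + |a_1 - a_2|) \lesssim \tau^{-1}\,d_{\mathcal{A}(2,1)}$ breaks at the last step. What is actually true — and what the co-Lipschitz bound needs — is that \emph{when $d_{\mathcal{A}(2,1)}$ is small}, the ordering rules out $v_1 \cdot v_2 < 0$; proving this is precisely the refutation of the paper's scenario (B3): nearly opposite directions with nearby lines force $\mathcal{C}(\bar{\xi}_{1}) \approx \mathcal{C}(\xi_{2})$ and $\mathcal{C}(\bar{\xi}_{2}) \approx \mathcal{C}(\xi_{1})$, and combined with both orderings $\xi_1 < \xi_2$, $\bar{\xi}_1 < \bar{\xi}_2$ and the separation $\xi_2 - \xi_1 \gtrsim \delta^{C\epsilon}$ this is impossible. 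You need to supply this argument rather than wave at it as bookkeeping; it is the same non-trivial step the paper spends several lines on, not a global feature of the parametrisation.

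Two minor remarks. First, your Lipschitz bound for the normalised direction picks up a needless factor $\tau^{-1}$; as the paper notes, the upper Lipschitz estimate actually holds with absolute constants on all of $[-10,10]^2$ (the direction map $(\xi_1,\xi_2) \mapsto \pm v$ is smooth up to the diagonal, since it is essentially the mean-angle map), though $\tau^{-1} \lessapprox 1$ so your weaker bound suffices. Second, the paper's scenarios (B1) and (B2) (both $\bar{p}_j$ near one fixed $p_i$) are automatically absorbed by your constraint $|a| \leq \sqrt{1 - \tau^{2}/4}$, so it is only (B3) that is missing.
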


\begin{proof} The inequality 
\begin{displaymath} d_{\mathcal{A}(2,1)}(\ell(\xi_{1},\xi_{2}),\ell(\bar{\xi}_{1},\bar{\xi}_{2})) \lesssim |(\xi_{1},\xi_{2}) - (\bar{\xi}_{1},\bar{\xi}_{2})| \end{displaymath} 
is straightforward, and in fact holds for all $(\xi_{1},\xi_{2}),(\bar{\xi}_{1},\bar{\xi}_{2}) \in [-10,10]^{2}$. This only uses the fact that $\mathcal{C}$ is a Lipschitz map, and we leave the details to the reader. The trickier task is to prove that 
\begin{equation}\label{form73} d_{\mathcal{A}(2,1)}(\ell(\xi_{1},\xi_{2}),\ell(\bar{\xi}_{1},\bar{\xi}_{2})) \gtrapprox |(\xi_{1},\xi_{2}) - (\bar{\xi}_{1},\bar{\xi}_{2})| \end{equation}
for all $(\xi_{1},\xi_{2}),(\bar{\xi}_{1},\bar{\xi}_{2}) \in [-10,10]^{2} \, \setminus \, \bigtriangleup$. This is clear if the left hand side is $\gtrapprox 1$, so we may assume that
\begin{equation}\label{form76} r := d_{\mathcal{A}(2,1)}(\ell(\xi_{1},\xi_{2}),\ell(\bar{\xi}_{1},\bar{\xi}_{2})) \leq c_{1}\delta^{C_{1}\epsilon} \end{equation}
for suitable absolute constants $c_{1},C_{1} > 0$, to be determined in the course of the proof.

The key geometric observation is this: if $(\xi_{1},\xi_{2}) \in [-10,10]^{2} \, \setminus \, \bigtriangleup$, and $r \in (0,1]$, then
\begin{equation}\label{form72} [\ell(\xi_{1},\xi_{2})](r) \cap S^{1} \subset B(\mathcal{C}(\xi_{1}),\mathbf{C}r) \cup B(\mathcal{C}(\xi_{2}),\mathbf{C}r), \end{equation}
where $\mathbf{C} \approx 1$. This is because $[\mathcal{C}(\xi_{1}),\mathcal{C}(\xi_{2})] \subset B(1)$ is a chord of length $\approx 1$ for $(\xi_{1},\xi_{2}) \in [-10,10]^{2} \, \setminus \, \bigtriangleup$, and such chords intersect $S^{1}$ at angle $\approx 1$.

Now, let $(\xi_{1},\xi_{2}),(\bar{\xi}_{1},\bar{\xi}_{2}) \in [-10,10]^{2} \, \setminus \, \bigtriangleup$, and write $\ell := \ell(\xi_{1},\xi_{2})$ and $\bar{\ell} := \ell(\bar{\xi}_{1},\bar{\xi}_{2})$. Thus $r = d_{\mathcal{A}(2,1)}(\ell,\bar{\ell})$. This implies that 
\begin{displaymath} [\mathcal{C}(\bar{\xi}_{1}),\mathcal{C}(\bar{\xi}_{2})] \subset \bar{\ell} \cap B(1) \subset \ell(Cr) \end{displaymath}
for some absolute constant $C > 0$. As we mentioned all the way back in \eqref{form78}, the inclusion $\bar{\ell} \cap B(1) \subset \ell(Cr)$ is the only property of the metric $d_{\mathcal{A}(1,2)}$ we explicitly need in the paper. In particular,
\begin{equation}\label{form74} \{\mathcal{C}(\bar{\xi}_{1}),\mathcal{C}(\bar{\xi}_{2})\} \subset [\ell(\xi_{1},\xi_{2})](Cr) \cap S^{1} \stackrel{\eqref{form72}}{\subset} B(\mathcal{C}(\xi_{1}),\mathbf{C}Cr) \cup B(\mathcal{C}(\xi_{2}),\mathbf{C}Cr), \end{equation}
using \eqref{form72}. Formally speaking, this is possible in the following $4$ ways:
\begin{itemize}
\item[(G)] $\mathcal{C}(\bar{\xi}_{1}) \in B(\mathcal{C}(\xi_{1}),\mathbf{C}Cr)$ and $\mathcal{C}(\bar{\xi}_{2}) \in B(\mathcal{C}(\xi_{2}),\mathbf{C}Cr)$, or
\item[(B1)] $\{\mathcal{C}(\bar{\xi}_{1}),\mathcal{C}(\bar{\xi}_{2})\} \subset B(\mathcal{C}(\xi_{1}),\mathbf{C}Cr)$, or
\item[(B2)] $\{\mathcal{C}(\bar{\xi}_{1}),\mathcal{C}(\bar{\xi}_{2})\} \subset B(\mathcal{C}(\xi_{2}),\mathbf{C}Cr)$, or
\item[(B3)] $\mathcal{C}(\bar{\xi}_{2}) \in B(\mathcal{C}(\xi_{1}),\mathbf{C}Cr)$ and $\mathcal{C}(\bar{\xi}_{1}) \in B(\mathcal{C}(\xi_{2}),\mathbf{C}Cr)$.
\end{itemize}
The case (G) is good: it implies that
\begin{equation}\label{form75} |\mathcal{C}(\bar{\xi}_{j}) - \mathcal{C}(\xi_{j})| \lessapprox r, \qquad j \in \{1,2\}. \end{equation}
Since $\mathcal{C}$ is bilipschitz on $[-10,10]$, this gives $|\bar{\xi}_{j} - \xi_{j}| \lessapprox r$ for $j \in \{1,2\}$, and therefore the proof of \eqref{form73} is complete. So, it remains to show that the bad scenarios (B1)-(B3) cannot occur. In cases (B1)-(B2), we have $|\bar{\xi}_{1} - \bar{\xi}_{2}| \lessapprox \mathbf{C}Cr$, which is impossible by $(\bar{\xi}_{1},\bar{\xi}_{2}) \in [-10,10]^{2} \, \setminus \, \bigtriangleup$, assuming that constants $c_{1},C_{2} > 0$ in the the upper bound for "$r$" were chosen correctly in \eqref{form76} (relative to the constants in the definition of $\bigtriangleup$). To see that the scenario (B3) is also impossible, write
\begin{equation}\label{form77} \bar{\xi}_{2} - \bar{\xi}_{1} = (\bar{\xi}_{2} - \xi_{1}) + (\xi_{1} - \xi_{2}) + (\xi_{2} - \bar{\xi}_{1}). \end{equation}
The middle term is  negative with absolute value $\approx 1$ (since $(\xi_{1},\xi_{2}) \in [-10,10]^{2} \, \setminus \, \bigtriangleup$), and the two other terms have absolute value $\lesssim \mathbf{C}r$ in scenario (B3). Therefore, again, if the upper bound for "$r$" was chosen small enough at \eqref{form76}, we see that the right hand side of \eqref{form77} is negative in case (B3). In particular $\bar{\xi}_{2} < \bar{\xi}_{1}$, violating the assumption $(\bar{\xi}_{1},\bar{\xi}_{2}) \in [-10,10]^{2} \, \setminus \, \bigtriangleup$. This proves that only scenario (G) is possible, and completes the proof of the lemma. \end{proof} 

We can finally conclude that the set of lines $\{\spa(\mathcal{G}(S_{(y_{1},y_{2})} \cap \He)) : (y_{1},y_{2}) \in \mathcal{S}\}$ contains a $(\delta,2s)$-set of cardinality $\approx \delta^{-2s}$, namely the set $\{\spa(\mathcal{G}(S_{\sigma} \cap \He)) : \sigma \in \Sigma\}$. For this final stretch, recall the set $\Sigma \subset \R^{2}$, which was a $(\delta,2s)$-subset of $\{y_{1} + y_{2} + Y : (y_{1},y_{2}) \in \mathcal{S}\}$ of cardinality $|\Sigma| \approx \delta^{-2s}$. Recall that every $(\sigma_{1},\sigma_{2}) \in \Sigma$ is associated to the circle $S_{\sigma_{1},\sigma_{2}}$ centred along the $x$-axis. Recall that $\mathcal{G} = \mathcal{F} \circ \mathcal{C}$ sends the intersection of each such circle with $\He$ to a chord of $S^{1}$.

\begin{cor}\label{cor2} The set
\begin{displaymath} \mathcal{G}(\Sigma) := \{\spa(\mathcal{G}(S_{\sigma} \cap \He)) : \sigma \in \Sigma\} \subset \mathcal{A}(2,1) \end{displaymath} 
is a $(\delta,2s)$-set of lines with $|\mathcal{G}(\Sigma)| \approx \delta^{-2s}$. \end{cor}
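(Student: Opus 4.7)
The plan is to observe that the corollary is essentially a bookkeeping combination of Corollary \ref{cor1} and Lemma \ref{lemma7}, composed through the identification $\mathcal{G}(S_{\sigma} \cap \He) = [\mathcal{C}(\xi_1),\mathcal{C}(\xi_2)]$ provided by \eqref{form88}.

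First I would unpack the definitions to identify $\mathcal{G}(\Sigma)$ with $\ell(\Phi(\Sigma))$, where $\ell$ is the map from Lemma \ref{lemma7} and $\Phi$ is the map from Lemma \ref{lemma6}. Indeed, for $\sigma = (\sigma_1,\sigma_2) \in \Sigma$, the circle $S_\sigma = \partial B((2\sigma_1,0),\sqrt{6\sigma_2 - 2\sigma_1^2})$ meets the $x$-axis in exactly the two points $\Phi(\sigma) = (\xi_1,\xi_2)$, so $S_\sigma = S(\xi_1,\xi_2)$ in the notation preceding \eqref{form88}. Applying \eqref{form88} then gives
\begin{displaymath} \mathcal{G}(S_\sigma \cap \He) = [\mathcal{C}(\xi_1),\mathcal{C}(\xi_2)], \end{displaymath}
whose span is $\ell(\Phi(\sigma))$ by definition. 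Consequently $\mathcal{G}(\Sigma) = \ell(\Phi(\Sigma))$.

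Next I would transfer the $(\delta,2s)$-set property across the maps $\Phi$ and $\ell$. By Corollary \ref{cor1}, $\Phi(\Sigma)$ is a $(\delta,2s)$-set with $|\Phi(\Sigma)| \approx \delta^{-2s}$, and by \eqref{form71} it lies in $[-10,10]^2 \setminus \bigtriangleup$. Lemma \ref{lemma7} states that $\ell$ is $(\approx 1)$-bilipschitz on that domain. Since $(\approx 1)$-bilipschitz maps preserve both cardinality and the $(\delta,2s)$-set condition (with constants absorbed into the $\approx$ notation, via the standard observation that if $L$ is $K$-bilipschitz and $E$ is a $(\delta,2s,C)$-set, then $L(E)$ is a $(\delta,2s, K^{2s}C)$-set), it follows that $\ell(\Phi(\Sigma)) = \mathcal{G}(\Sigma)$ is a $(\delta,2s)$-set in $(\mathcal{A}(2,1), d_{\mathcal{A}(2,1)})$ of cardinality $\approx \delta^{-2s}$.

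There is no real obstacle here; the whole content was already packaged into Lemmas \ref{lemma5}, \ref{lemma6}, \ref{lemma7} and the geometric identity \eqref{form88}. The only point that merits explicit verification is the identification $S_\sigma = S(\xi_1,\xi_2)$ with $(\xi_1,\xi_2) = \Phi(\sigma)$, which is immediate from the quadratic formula: the roots of $(\xi - 2\sigma_1)^2 = 6\sigma_2 - 2\sigma_1^2$ (the equation for the intersection of $S_\sigma$ with the $x$-axis) are precisely $2\sigma_1 \pm \sqrt{6\sigma_2 - 2\sigma_1^2}$, matching the definition of $\Phi$ in Lemma \ref{lemma6}.
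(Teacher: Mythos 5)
Your proof is correct and follows the paper's argument essentially verbatim: identify $\mathcal{G}(\Sigma) = \ell(\Phi(\Sigma))$ via \eqref{form88}, then transfer the $(\delta,2s)$-set property of $\Phi(\Sigma)$ from Corollary \ref{cor1} through the $(\approx 1)$-bilipschitz map $\ell$ of Lemma \ref{lemma7}, using \eqref{form71} to guarantee $\Phi(\Sigma)$ lies in the domain where $\ell$ is bilipschitz. The explicit verification that the roots of $(\xi - 2\sigma_1)^2 = 6\sigma_2 - 2\sigma_1^2$ match $\Phi(\sigma)$ is exactly the computation the paper carries out.
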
 

\begin{proof} In Corollary \ref{cor1}, we already observed that $\Phi(\Sigma)$ is a $(\delta,2s)$-set with $|\Phi(\Sigma)| \approx \delta^{-2s}$, and in \eqref{form71} we recorded that $\Phi(\Sigma) \subset [-10,10]^{2} \, \setminus \, \bigtriangleup$. Now, we claim that
\begin{equation}\label{form79} \spa(\mathcal{G}(S_{\sigma} \cap \He)) = \ell(\Phi(\sigma)), \qquad \sigma \in \Sigma, \end{equation}
where "$\ell$" is the map from Lemma \ref{lemma7}. This will complete the proof of the corollary, since the map "$\ell$" was shown in Lemma \ref{lemma7} to be $\approx 1$-bilipschitz on the set $[-10,10]^{2} \, \setminus \, \bigtriangleup$, and in particular on $\Phi(\Sigma)$.

The proof of \eqref{form79} is a matter of unwrapping the definitions. The circle $S_{\sigma}$ intersects the $x$-axis in precisely the two points
\begin{displaymath} \xi_{1} := 2\sigma_{1} - \sqrt{6\sigma_{2} - 2\sigma_{1}^{2}} \quad \text{and} \quad \xi_{2} := 2\sigma_{1} + \sqrt{6\sigma_{2} - 2\sigma_{1}^{2}}, \end{displaymath}
which are also the coordinates of $\Phi(\sigma)$. Thus $S_{\sigma} = S(\xi_{1},\xi_{2})$ in the notation of \eqref{form88}. Therefore, recalling the definition of $\ell(\xi_{1},\xi_{2})$ from Lemma \ref{lemma7}, we have
\begin{displaymath} \ell(\Phi(\sigma)) = \ell(\xi_{1},\xi_{2}) = \spa([\mathcal{C}(\xi_{1}),\mathcal{C}(\xi_{2})]) \stackrel{\eqref{form88}}{=} \spa(\mathcal{G}(S(\xi_{1},\xi_{2}) \cap \He)) = \spa(\mathcal{G}(S_{\sigma} \cap \He)). \end{displaymath}
This completes the proof of \eqref{form79}. \end{proof}

We finally summarise the proof of Theorem \ref{thm2}:

\begin{proof}[Proof of Theorem \ref{thm2}] The map $\mathcal{G}$ sends $S_{(y_{1},y_{2})} \cap \He$, $(y_{1},y_{2}) \in \mathcal{S}$, to a certain chord of $B(1)$, which then spans a line $\ell_{(y_{1},y_{2})} \in \mathcal{A}(2,1)$. The set of lines so obtained contains a $(\delta,2s)$-set $\mathcal{L}$ of cardinality $|\mathcal{L}| = |\Sigma| \approx \delta^{-2s}$. This is the content of Corollary \ref{cor2}. 

On the other hand, $\mathcal{G}$ is bilipschitz on bounded subsets of $\He$, so 
\begin{displaymath} \mathcal{G}(S_{(y_{1},y_{2})}(\mathbf{C}\delta) \cap \He) \subset \ell_{(y_{1},y_{2})}(\mathbf{C}'\delta), \qquad (y_{1},y_{2}) \in \mathcal{S}, \end{displaymath}
for some $\mathbf{C}' \sim \mathbf{C}$. In particular, $\ell_{(y_{1},y_{2})}(\mathbf{C}'\delta)$, $(y_{1},y_{2}) \in \mathcal{S}$, contains the $(\delta,s)$-set $\mathcal{G}(F_{(y_{1},y_{2})})$. Recall that $F_{(y_{1},y_{2})} \subset S_{(y_{1},y_{2})}(\mathbf{C}\delta) \cap F \cap \He$ was a $(\delta,s)$-set of cardinality $|F_{(y_{1},y_{2})}| \approx \delta^{-s}$, see \eqref{form103}, and the remark below \eqref{P5} (about why we may add the intersection with "$\He$").

Therefore, $F' = \mathcal{G}(F \cap \He) \subset \R^{2}$ is a $\delta$-discretised $(s,2s)$-Furstenberg set. The fact that $\mathcal{G}(F_{(y_{1},y_{2})})$ is contained in $\ell_{(y_{1},y_{2})}(\mathbf{C}'\delta)$, rather than $\ell_{(y_{1},y_{2})}(\delta)$, makes no difference: each of the thicker neighbourhoods can be covered by $\approx 1$ thinner neighbourhoods, and the ensuing slightly larger family of lines is still a $(\delta,2s)$-set. Since $|F'|_{\delta} \lesssim |F|_{\delta} \lessapprox \delta^{-2s}$, existence of $F'$ violates Theorem \ref{DBZ}, assuming that $\epsilon > 0$ was small enough, depending only on $s \in (0,1)$. This contradiction completes the proof of Theorem \ref{thm2}. \end{proof}

\appendix

\section{Mapping circular arcs to chords}\label{appA}

We give a short geometric argument for the fact that $z \mapsto 2z/(1 + |z|^{2})$ maps the Poincar\'e disc model to the Beltrami-Klein model.

\begin{proposition}\label{prop2} The map $\mathcal{F}(z) = 2z/(1 + |z|^{2})$ has the following property. Let $S \subset \R^{2}$ be a circle which intersects the unit circle $S^{1}$ in straight angles. Let $J := S \cap B(1)$ be the part of $S$ inside the closed unit disc, and let $\{a,b\} := S \cap S^{1}$. Then $\mathcal{F}(J) = [a,b]$.
\end{proposition}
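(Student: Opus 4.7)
The plan is to recognize $\mathcal{F}$ as the composition of inverse stereographic projection with vertical projection, and then reduce the statement to a straightforward fact about planes and great/small circles on $S^{2}$. First I would compute: letting $\pi \colon S^{2} \setminus \{N\} \to \R^{2}$ denote stereographic projection from the north pole $N = (0,0,1)$, the inverse sends $z = x+iy$ to
\begin{displaymath} \pi^{-1}(z) = \left( \tfrac{2x}{1+|z|^{2}}, \tfrac{2y}{1+|z|^{2}}, \tfrac{|z|^{2}-1}{|z|^{2}+1} \right), \end{displaymath}
so if $p(X,Y,Z) = (X,Y)$ is the vertical projection, then $\mathcal{F} = p \circ \pi^{-1}$. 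Moreover $\pi^{-1}$ maps $\bar{B}(1)$ bijectively onto the lower hemisphere and fixes the equator $S^{1} = \{Z=0\}$ pointwise.

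Next I would use the two classical properties of stereographic projection (conformality, and circles/lines mapping to circles) to conclude that $\tilde{S} := \pi^{-1}(S \cap \bar{B}(1))$ is a circular arc on $S^{2}$ with the same endpoints $a,b$, which meets the equator orthogonally at $a$ and $b$. The key geometric lemma is then: \emph{a circle on $S^{2}$ meeting the equator orthogonally lies in a plane perpendicular to $\{Z=0\}$}. I would prove this with a short tangent-vector calculation: write the circle as $S^{2} \cap P_{0}$ for a plane $P_{0}$; its tangent vector $\mathbf{v}$ at $a$ lies in $P_{0}$ and is perpendicular to $a$ (since the circle lies on $S^{2}$). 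Orthogonality with the equator at $a$ forces $\mathbf{v} \perp (-a_{2}, a_{1}, 0)$; combined with $\mathbf{v} \perp a$ and $a_{3}=0$, this pins $\mathbf{v}$ to be $\pm(0,0,1)$. Hence $P_{0}$ contains a vertical direction, and $P_{0}$ is a vertical plane.

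To finish, I would observe that $p(\tilde{S}) \subset P_{0} \cap \{Z = 0\} \cap \bar{B}(1)$, which is precisely the chord $[a,b]$; conversely, since $\tilde{S}$ is an arc on $S^{2} \cap P_{0}$ joining $a$ to $b$, its vertical shadow sweeps out the entire chord. Therefore $\mathcal{F}(J) = p(\tilde{S}) = [a,b]$, as desired.

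The only mild obstacle is the tangent-vector computation identifying vertical planes as exactly those whose $S^{2}$-sections meet the equator at right angles; everything else is formal. One should also note the degenerate case where $S$ is a straight line through the origin, which corresponds to $P_{0}$ being a vertical plane through the $z$-axis; the argument goes through identically.
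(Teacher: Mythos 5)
Your argument is correct, and it is a genuinely different proof from the one in the paper. The paper's proof is a direct coordinate computation: it reduces by symmetry to circles centred on the $y$-axis, parametrises the arc $J$ by an angle $\theta$, and solves for the radial scaling factor $r(z)$ that sends $J$ into the horizontal line containing $[a,b]$, verifying that $r(z) = 2/(1+|z|^{2})$ via an algebraic identity. You instead identify $\mathcal{F}$ as the composition $p \circ \pi^{-1}$ of inverse stereographic projection (from the north pole, mapping $\bar{B}(1)$ to the closed lower hemisphere and fixing the equator pointwise) with vertical projection, and then invoke the two classical facts that stereographic projection is conformal and carries circles to circles. This reduces the proposition to the purely three-dimensional statement that a circle on $S^{2}$ meeting the equator orthogonally lies in a vertical plane, which you establish by a short tangent-vector argument: the tangent $\mathbf{v}$ at the intersection point $a$ is orthogonal both to $a$ (since the curve lies on $S^{2}$) and to the equatorial tangent $(-a_{2},a_{1},0)$, and these two span the horizontal plane, forcing $\mathbf{v} = \pm(0,0,1)$. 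The vertical projection of a circle in a vertical plane is the corresponding chord, and surjectivity onto $[a,b]$ is clear from the geometry of the semicircular arc. Your approach is more conceptual and explains \emph{why} $\mathcal{F}$ has this chord-producing property -- it is really the familiar passage between hemispherical, Poincar\'e disc, and Beltrami--Klein models -- at the cost of invoking the standard properties of stereographic projection. The paper's argument is more self-contained and avoids three-dimensional reasoning, at the cost of being a "verification" rather than an explanation. Both are complete and correct.
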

\begin{figure}[h!]
\begin{center}
\begin{overpic}[scale = 1.3]{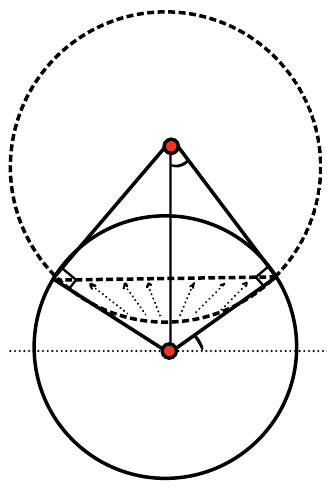}
\put(5,73){$S$}
\put(43,29.5){$\theta$}
\put(36.5,61){$\theta$}
\put(48,32.5){$1$}
\put(35,48){$\tfrac{1}{\sin \theta}$}
\put(47,58){$\tfrac{\cos \theta}{\sin \theta}$}
\put(32.5,22){$0$}
\put(26,75.5){$(0,\tfrac{1}{\sin \theta})$}
\put(5,42){$a$}
\put(60,42){$b$}
\end{overpic}
\caption{Objects in Proposition \ref{prop2}.}\label{fig1}
\end{center}
\end{figure}

\begin{proof} It evidently suffices to consider the case where the centre of the circle $S$ lies on the $y$-axis, as in Figure \ref{fig1}. Instead of checking that the map $\mathcal{F}$ does the right thing, we "find" it as follows. We seek a map of the form $\mathcal{F}(z) = r(z)z$, where $r(z) \in [1,\infty)$, and which maps the arc $J$ to the chord $[a,b]$. 

Let $\theta \in (0,\pi)$ be the angle depicted in Figure \ref{fig1}. Using the hypothesis that $S$ meets $S^{1}$ in straight angles, one calculates that the centre of $S$ is the point $x := (0,\tfrac{1}{\sin \theta})$, and the radius of $S$ is $r := \tfrac{\cos \theta}{\sin \theta}$. Moreover, the chord $[a,b]$ is contained in the set $\{y = \sin \theta\}$.

Every point on $S$, and in particular $J$, has the form 
\begin{equation}\label{form80} z = x + re = \left(\tfrac{\cos \theta}{\sin \theta} e_{1},\tfrac{1}{\sin \theta} + \tfrac{\cos \theta}{\sin \theta} e_{2} \right), \qquad e = (e_{1},e_{2}) \in S^{1}. \end{equation}
Our desired map $\mathcal{F}(z) = r(z)z$ has the property of sending each $z \in J$ inside the set $\{y = \sin \theta\}$. Looking at the $2^{nd}$ coordinate of $z$ in \eqref{form80}, this gives
\begin{displaymath} r(z) \left( \tfrac{1}{\sin \theta} + \tfrac{\cos \theta}{\sin \theta} e_{2} \right) = \sin \theta \quad \Longleftrightarrow \quad r(z) = \tfrac{\sin^{2}\theta}{1 + e_{2} \cos \theta}.  \end{displaymath}
On the other hand, a straightforward computation, using \eqref{form80} and the identities $e_{1}^{2} + e_{2}^{2} = 1 = \cos^{2}\theta + \sin^{2}\theta$, shows that
\begin{displaymath} \frac{2}{1 + |z|^{2}} = \frac{\sin^{2}\theta}{1 + e_{2}\cos \theta} = r(z). \end{displaymath}
Thus, $z \mapsto 2z/(1 + |z|^{2})$ maps $J$ to the chord $[a,b]$, as claimed. \end{proof}

\bibliographystyle{plain}
\bibliography{references}

\end{document}